\newtheorem{theorem}{Theorem}[section]
\newtheorem{lemma}[theorem]{Lemma}
\newtheorem{proposition}[theorem]{Proposition}
\newtheorem{corollary}[theorem]{Corollary}
\numberwithin{equation}{section}
\newcommand{\fconvse}{{\mbox{\rm Conv}_{{\rm sc}}(\R)}} 
\newcommand{\fconvs}{{\mbox{\rm Conv}_{{\rm sc}}(\R^n)}} 
\newcommand{\fconvsk}{{\mbox{\rm Conv}_{{\rm sc}}(\R^k)}} 
\newcommand{\fconvskpt}{{\mbox{\rm Conv}_{{\rm sc}}(\R^{k+2})}} 
\newcommand{\fconvsz}{{\mbox{\rm Conv}_{{\rm z}}(\R^{n})}} 
\newcommand{\fconvsg}{{\mbox{\rm Conv}_{{\rm gz}}(\R^{n})}} 
\newcommand{\fconvx}{{\mbox{\rm Conv}(\R^n)}}
\newcommand{\fconvf}{{\mbox{\rm Conv}(\R^n; \R)}}
\newcommand{\fconvfk}{{\mbox{\rm Conv}(\R^k; \R)}}
\newcommand{\fconvfE}{{\mbox{\rm Conv}(E; \R)}}
\newcommand{\fconvfF}{{\mbox{\rm Conv}(F; \R)}}
\newcommand{\fconvsE}{{\mbox{\rm Conv}_{{\rm sc}}(E)}} 
\newcommand{\fconvsF}{{\mbox{\rm Conv}_{{\rm sc}}(F)}} 
\newcommand{\fconvfs}{{\mbox{\rm Conv}_{0}(\R^n;\R)}}
\newcommand{\fconvfsk}{{\mbox{\rm Conv}_{0}(\R^k;\R)}}
\newcommand{\infconv}{\mathbin{\Box}}
\newcommand{\deq}{\mathbin{:=}}
\newcommand{\sq}{\mathbin{\vcenter{\hbox{\rule{.3ex}{.3ex}}}}}
\newcommand{\proj}{\operatorname{proj}}
\DeclareMathOperator{\acosh}{acosh}
\newcommand{\Abel}{\operatorname{\mathcal A}} 
\newcommand{\iAbel}{\operatorname{\mathcal A}^{-1}\!} 
\newcommand{\Grass}[2]{\operatorname{G}(#2,#1)}
\newcommand{\cK}{{\mathcal K}} 
\newcommand{\cP}{{\mathcal P}}
\newcommand{\Val}[1]{\operatorname{Val}(\R^{#1})}
\newcommand{\Vals}[1]{\operatorname{Val}^{\rm sm}(\R^{#1})}
\newcommand{\VConvf}{\operatorname{VConv}(\R^n; \R)}
\renewcommand{\S}{\mathbb{S}}
\newcommand{\sn}{{\mathbb{S}^{n-1}}}
\newcommand{\Bn}{B^n}
\newcommand{\gln}{\operatorname{GL}(n)}
\renewcommand{\O}{\operatorname{O}}
\newcommand{\SO}{\operatorname{SO}}
\newcommand{\Hess}{{\operatorname{D}}^2}
\DeclareMathOperator{\oZ}{\operatorname{Z}}
\DeclareMathOperator{\oY}{\operatorname{Y}}
\newcommand{\oZZ}[2]{\operatorname{V}_{#1,#2}}
\newcommand{\cT}{\mathcal{T}}
\newcommand{\R}{{\mathbb R}}
\newcommand{\interno}{\operatorname{int}}
\newcommand{\dom}{\operatorname{dom}}
\newcommand{\hm}{\mathcal H}
\newcommand{\epi}{\operatorname{epi}}
\renewcommand{\d}{\,\mathrm{d}}
\newcommand{\ind}{{\rm\bf I}}
\newcommand{\Had}[2]{D_{#1}^{#2}} 
\begin{document}
\title[The Hadwiger Theorem on Convex Functions, IV]{The Hadwiger Theorem on Convex Functions, IV:\\ The Klain Approach}

\author{Andrea Colesanti}
\address{Dipartimento di Matematica e Informatica ``U. Dini'',
Universit\`a degli Studi di Firenze,
Viale Morgagni 67/A - 50134, Firenze, Italy}
\email{andrea.colesanti@unifi.it}

\author{Monika Ludwig}
\address{Institut f\"ur Diskrete Mathematik und Geometrie,
Technische Universit\"at Wien,
Wiedner Hauptstra\ss e 8-10/1046,
1040 Wien, Austria}
\email{monika.ludwig@tuwien.ac.at}

\author{Fabian Mussnig}
\address{Dipartimento di Matematica e Informatica ``U. Dini'',
Universit\`a degli Studi di Firenze,
Viale Morgagni 67/A - 50134, Firenze, Italy}
\email{mussnig@gmail.com}

\date{}

\begin{abstract} New proofs of the Hadwiger theorem for smooth and for continuous valuations on convex functions are obtained, and the Klain--Schneider theorem on convex functions is established. In addition,  an extension theorem for valuations defined on functions with lower dimensional domains is proved, and its connection to the Abel transform is explained.

\bigskip
{\noindent 2000 AMS subject classification: 52B45 (26B25, 49Q20, 52A41, 52A39)}
\end{abstract}

\maketitle

\section{Introduction and Statement of Results}
On a space $X$ of (extended) real-valued functions, a functional $\oZ\colon X\to\R$ is called a \emph{valuation} if
$$\oZ(u)+\oZ(v)=\oZ(u\vee v) + \oZ(u \wedge v)$$
for every $u,v\in X$ such that also their pointwise maximum $u\vee v$ and their pointwise minimum $u\wedge v$ belong to $X$. This recently introduced notion extends the classical notion of valuation on the space of convex bodies (that is, non-empty, compact, convex sets) in $\R^n$.  On convex bodies, the paradigmatic result on valuations is the celebrated Hadwiger theorem \cite{Hadwiger:V}, which provides a complete classification of continuous, translation and rotation invariant valuations on convex bodies and a characterization of the classical intrinsic volumes. Hadwiger's theorem leads to effortless proofs  in integral geometry and geometric probability (see \cite{Hadwiger:V} and \cite{Klain:Rota}). It is the first culmination of the program initiated by Blaschke to classify valuations invariant under various transformation groups and the starting point of geometric valuation theory (see \cite[Chapter~6]{Schneider:CB2} and see \cite{Alesker99,Alesker01,AleskerFaifman, Bernig:Fu,  LiMa,  Ludwig:matrix, Ludwig:Reitzner2,Ludwig:convex,Haberl:Parapatits_centro, HLYZ_acta, BLYZ_cpam} for some recent results).
The first classification results of valuations on classical function spaces were obtained for $L_p$ and Sobolev spaces and  on continuous and   Lipschitz functions (see  \cite{Tsang:Lp, Ludwig:SobVal,Ludwig:Fisher,ColesantiPagniniTradaceteVillanueva,ColesantiPagniniTradaceteVillanueva2021, Villanueva2016}).  

For valuations on convex functions,  the first classification results \cite{Colesanti-Ludwig-Mussnig-1,Colesanti-Ludwig-Mussnig-2,Mussnig19, Mussnig21} and the first structural results \cite{Alesker_cf,Colesanti-Ludwig-Mussnig-4, Knoerr1, Knoerr2} were recently obtained. In \cite{Colesanti-Ludwig-Mussnig-5}, the authors  established  the following Hadwiger theorem for convex functions. Let
$$\fconvs:=\Big\{u\colon\R^n\to(-\infty,+\infty]\colon u \not\equiv +\infty, \lim_{|x|\to+\infty} \frac{u(x)}{|x|}=+\infty, u \text{ is l.s.c. and convex}\Big\}$$
denote the space of proper,  super-coercive, lower semicontinuous, convex functions on $\R^n$, where $\vert\cdot\vert$ is the Euclidean norm. It is equipped with the topology induced by epi-convergence (see Section \ref{convex functions} for the definition). A functional $\oZ\colon\fconvs\to\R$ is \emph{epi-translation invariant} if $\oZ(u\circ \tau^{-1}+\alpha)=\oZ(u)$ for every $u\in\fconvs$, translation $\tau$ on $\R^n$ and  $\alpha\in\R$. It is \emph{rotation invariant} if $\oZ(u\circ \vartheta^{-1})=\oZ(u)$ for every $u\in\fconvs$ and $\vartheta\in\SO(n)$. Let $n\ge 2$.

\begin{theorem}[\!\cite{Colesanti-Ludwig-Mussnig-5}]
\label{thm:hadwiger_convex_functions}
A functional $\oZ\colon\fconvs \to \R$ is a continuous, epi-translation and rotation invariant valuation if and only if there exist functions $\zeta_0\in\Had{0}{n}$, \dots, $\zeta_n\in\Had{n}{n}$  such that
\begin{equation*}
\oZ(u)= \sum_{j=0}^n \oZZ{j}{\zeta_j}^n(u) 
\end{equation*}
for every $u\in\fconvs$.
\end{theorem}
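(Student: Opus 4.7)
The plan is to prove Theorem~\ref{thm:hadwiger_convex_functions} along the Klain--Schneider lines suggested by the paper's title, using the three main ingredients developed in the body of the paper: an epi-homogeneous decomposition of $\oZ$, a Klain--Schneider injectivity theorem for valuations on convex functions, and the extension theorem for valuations on functions with lower-dimensional domains.

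First, I would invoke the homogeneous decomposition of continuous, epi-translation invariant valuations on $\fconvs$ to write
$$\oZ = \oZ_0 + \oZ_1 + \cdots + \oZ_n,$$
where each $\oZ_j$ is continuous, epi-translation invariant, rotation invariant, and epi-homogeneous of degree $j$. Since each valuation $\oZZ{j}{\zeta}^n$ is itself $j$-homogeneous, it then suffices to show that an arbitrary such $\oZ_j$ has the form $\oZZ{j}{\zeta_j}^n$ for some $\zeta_j\in\Had{j}{n}$.

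Second, for each degree $j$ I would associate to $\oZ_j$ its \emph{Klain section}: for a subspace $E\in\Grass{n}{j}$, the valuation on $\fconvsE$ obtained by applying $\oZ_j$ to convex functions whose epigraphs are concentrated on $E$. Because $\oZ_j$ is $j$-homogeneous, this Klain section is a continuous, epi-translation invariant valuation on $\fconvsE$ of top degree on that subspace. The rotation invariance of $\oZ_j$, together with $\SO$-invariance within each subspace, forces every Klain section to be a multiple of a single canonical top-degree valuation weighted by a radial density $\zeta_j$, independent of the choice of $E$. The Klain--Schneider theorem proved in this paper then guarantees that $\oZ_j$ is uniquely determined by this Klain data.

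Third, I would use the extension theorem for valuations on functions with lower-dimensional domains to reconstruct $\oZ_j$ on the whole space $\fconvs$ from its Klain section. The theorem provides a canonical extension of a well-behaved valuation on $\fconvsk$ to one on $\fconvs$, and at the level of radial densities this extension is realised by (an inverse of) the Abel transform, as advertised in the abstract. Comparing with the known restriction of $\oZZ{j}{\zeta_j}^n$ to $j$-dimensional functions then identifies the extended valuation with $\oZZ{j}{\zeta_j}^n$, with $\zeta_j\in\Had{j}{n}$. Summing over $j$ gives the claimed representation, while the converse direction, that every such sum is a continuous, epi-translation and rotation invariant valuation, follows from the already established properties of the valuations $\oZZ{j}{\zeta}^n$.

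The principal obstacle is the Klain--Schneider injectivity step: proving that a continuous, epi-translation invariant, $j$-homogeneous valuation whose Klain sections vanish on every $E\in\Grass{n}{j}$ must itself be identically zero. Classically this rests on the fact that simple valuations on convex bodies that vanish on lower-dimensional bodies are determined by their top-dimensional behaviour, and transporting this to the convex-function setting requires replacing Hausdorff convergence by epi-convergence and approximating a general $u\in\fconvs$ by functions concentrated on $j$-dimensional subspaces in a way that interacts controllably with the valuation and the homogeneous decomposition. Carrying out this approximation, and in parallel verifying that the Abel transform is the right bridge between densities on a $k$-dimensional subspace and radial densities on $\R^n$, is where the real technical work of the proof should lie.
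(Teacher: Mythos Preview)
Your outline follows Klain's classical strategy faithfully, but the third step --- extending the Klain section on a $j$-dimensional subspace back to $\fconvs$ via an inverse Abel transform --- fails for general continuous valuations, and this failure is precisely what the paper works around. By Lemma~\ref{le:abel_sq_diffable}, $\Abel^2$ maps $C_b((0,\infty))$ into $C_b^1((0,\infty))$, so any $\zeta\in\Had{j}{k}$ that is not $C^1$ is \emph{not} in the image of $\Abel^{2}$; more generally (see Section~\ref{se:lower}, especially the subsection ``On Klain's Proof''), the Abel transform $\Abel\colon\Had{j}{k+1}\to\Had{j}{k}$ is not surjective. Hence there exist valuations $\oZZ{j}{\zeta}^k$ on $\fconvsk$ that cannot be obtained as the restriction of any $\oZZ{j}{\xi}^{k+l}$. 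So the ``canonical extension'' you invoke simply does not exist in the continuous category, and your reconstruction of $\oZ_j$ from its Klain section breaks down exactly at the step you flagged as the real technical work.

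The paper's route is genuinely different: it applies the Klain scheme \emph{only} in the smooth category, where the inverse Abel transform does map $C_c^\infty([0,\infty))$ to itself, yielding Theorem~\ref{hadwiger_smooth}. For a general continuous $\oZ$, it then approximates $\oZ$ by smooth valuations $\oZ_m$ (Lemma~\ref{le:seq_smooth_vals}), rewrites each $\oZ_m$ via the Cauchy--Kubota formula (Theorem~\ref{cauchy_function}) as an integral over $\Grass{j}{n}$ with density $\alpha_m\in C_c^\infty([0,\infty))$, and reads off $\alpha_m(t)=\kappa_j^{-1}\oZ_m(u_t)$ from the test functions $u_t=t|\cdot|+\ind_{\Bn}$. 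The limit $\alpha=\kappa_j^{-1}\oZ(u_{\cdot})$ is then shown to lie in $C_c([0,\infty))$, and dominated convergence on a dense family of functions closes the argument. The Cauchy--Kubota representation is what replaces the unavailable inverse Abel transform: it lives entirely in $C_c([0,\infty))$ and is stable under locally uniform limits of valuations, which the Klain extension is not.
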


\noindent
Here, the spaces $\Had{j}{n}$ are defined for $0\leq j \leq n-1$ as
$$
\Had{j}{n}\deq\Big\{\zeta\in C_b((0,\infty))\colon  \lim_{s\to 0^+} s^{n-j} \zeta(s)=0,  \lim_{s\to 0^+} \int_s^{\infty}  t^{n-j-1}\zeta(t) \d t \text{ exists and is finite}\Big\},
$$
where  $C_b((0,\infty))$ is the set of continuous functions with bounded support on $(0,\infty)$. In addition, let $\zeta\in\Had{n}{n}\,$ if $\zeta\in C_b((0,\infty))$ and $\lim_{s\to 0^+} \zeta(s)$ exists and is finite. In this case, we set $\zeta(0)\deq\lim_{s\to 0^+} \zeta(s)$ and consider $\zeta$ also as an element of $C_c([0,\infty))$, the set of continuous functions with compact support on $[0,\infty)$. For $0\leq j \leq n$ and $\zeta\in\Had{j}{n}$, the functionals $\oZZ{j}{\zeta}^n\colon\fconvx\to\R$ were introduced in \cite{Colesanti-Ludwig-Mussnig-5}, where it was proved that there exists a unique continuous extension to $\fconvs$ of the functional defined by
\begin{equation}
\label{eq:ozz_on_c2p}
u\mapsto\int_{\R^n} \zeta(|\nabla u(x)|)\big[\Hess u(x)\big]_{n-j} \d x
\end{equation}
on $\fconvs\cap C_+^2(\R^n)$. Here,  $C_+^2(\R^n)$ is the class of twice continuously differentiable functions $u$ with positive definite Hessian $\Hess u$ and  $\big[\Hess u]_k$ is the $k$th elementary symmetric function of the eigen\-values of $\Hess u$ for $1\le k\le n$, while $[\Hess u]_0\!:\equiv 1$. This extension is called the \emph{$j$th functional intrinsic volume} $\,\oZZ{j}{\zeta}^n$ with density $\zeta$.
Explicit representations for functional intrinsic volumes were obtained in \cite{Colesanti-Ludwig-Mussnig-6} using functional Cauchy--Kubota formulas (see Theorem \ref{cauchy_function} below) and in \cite{Colesanti-Ludwig-Mussnig-7} using a new family of mixed Monge--Amp\`ere measures.
Note that $\oZZ{0}{\zeta}^n(u)$ is a constant for $\zeta\in\Had{0}{n}$, independent of $u\in\fconvs$.
As in the classical Hadwiger theorem, a complete classification of continuous,  epi-translation and rotation invariant valuations is obtained and thereby a characterization of functional intrinsic volumes. 

The main aim of this paper is to give a new proof of Theorem~\ref{thm:hadwiger_convex_functions}. The proof in \cite{Colesanti-Ludwig-Mussnig-5} followed the basic outline of Hadwiger's original proof. Klain \cite{Klain95} found a different approach to the classical Hadwiger theorem, which we try to adapt to the functional case. A critical element of Klain's proof is his so-called volume theorem, which was extended by Schneider \cite{Schneider:simple} to a complete classification of  \emph{simple}, continuous and translation invariant valuations on convex bodies, where a valuation is simple if it vanishes on lower dimensional sets. We establish the following  new functional version of the Klain--Schneider theorem. 

\begin{theorem}
\label{Klain-Schneider}
A functional $\,\oZ\colon\fconvs \to \R$ is a simple, continuous, epi-translation invariant valuation if and only if there exists a function 
$\zeta\in C_c(\R^n)$  such that
\begin{equation*}
\oZ(u)= \int_{\dom u} \zeta(\nabla u(x)) \d x
\end{equation*}
for every $u\in\fconvs$.
\end{theorem}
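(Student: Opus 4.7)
Given $\zeta\in C_c(\R^n)$, I plan to verify that $\oZ_\zeta(u) := \int_{\dom u} \zeta(\nabla u(x))\d x$ has the stated properties. The integral is finite because super-coercivity of $u$ makes $u^*$ finite-valued and locally Lipschitz, so $\{x:\nabla u(x)\in\operatorname{supp}\zeta\}=\partial u^*(\operatorname{supp}\zeta)$ is bounded. Epi-translation invariance follows from a change of variables, simplicity from the fact that $\dom u$ has Lebesgue measure zero when $\dim\dom u<n$, and the valuation identity from the pointwise compatibility of the gradient with max/min. For continuity under epi-convergence---the delicate point---I would invoke the continuity of the top-degree functional intrinsic volume $\oZZ{n}{\zeta}^n$ established in~\cite{Colesanti-Ludwig-Mussnig-5}, suitably extended from radial densities of the form $\zeta(|\cdot|)$ to arbitrary $\zeta\in C_c(\R^n)$.

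\textbf{Necessity.} Let $\oZ\colon\fconvs\to\R$ be simple, continuous, and epi-translation invariant. The plan is to construct $\zeta$ by probing $\oZ$ with a family of affine test functions and then reducing the classification to the classical Klain--Schneider theorem on convex bodies. For $y\in\R^n$ and a convex body $P\in\cK^n$, consider the function $u_{y,P}$ equal to $y\cdot x$ on $P$ and to $+\infty$ elsewhere; this lies in $\fconvs$ because $P$ is bounded. I would first verify the pointwise identities $u_{y,P}\vee u_{y,Q}=u_{y,P\cap Q}$ and $u_{y,P}\wedge u_{y,Q}=u_{y,P\cup Q}$ (the latter valid whenever $P\cup Q$ is convex), so that $F_y(P):=\oZ(u_{y,P})$ defines a continuous, translation invariant valuation on $\cK^n$. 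Simplicity of $\oZ$ then forces $F_y(P)=0$ whenever $\dim P<n$. The classical Klain--Schneider theorem yields $F_y(P)=c(y)\,V_n(P)$ up to a possibly non-zero odd simple summand. Setting $\zeta(y):=c(y)$, continuity of $\zeta$ should follow from continuity of $\oZ$, and compact support from the super-coercive growth condition (for $|y|$ outside a compact set, $u_{y,P}$ can be related via epi-translates to configurations yielding vanishing contribution).

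\textbf{Main obstacle.} The hardest step will be excluding the odd simple summand in Schneider's decomposition of $F_y$. To rule it out, I plan to test $\oZ$ on additional members of $\fconvs$ beyond the ``cones'' $u_{y,P}$---notably piecewise affine super-coercive functions whose subgradient realizes several distinct slopes on adjacent cells---and exploit the valuation identity across those cells. Matching the resulting relations against the conjectured formula $\oZ_\zeta$ should force the odd summand to vanish, since no odd analogue is available inside the integral $\int\zeta(\nabla u)\d x$. Once the identity $\oZ(u_{y,P})=\zeta(y)\,V_n(P)$ is secured, I would conclude by first showing $\oZ(u)=\oZ_\zeta(u)$ for piecewise affine super-coercive $u$ via iterated cell-by-cell application of the valuation identity, and then extending to all of $\fconvs$ by continuity of both functionals and the density of piecewise affine functions in $\fconvs$ under epi-convergence.
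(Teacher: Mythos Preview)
Your proposal has two genuine gaps that the paper's argument is specifically designed to avoid.

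\textbf{The odd summand.} Your plan to eliminate the Schneider term $\int_{\sn}\sigma_y\,dS_{n-1}(P,\cdot)$ is not an argument. Saying that ``no odd analogue is available inside the integral $\int\zeta(\nabla u)\,dx$'' presupposes the representation you are trying to prove. Testing on piecewise affine functions with several slopes does not obviously produce relations that isolate the odd part, and you give no mechanism for why it would. In fact, for each fixed $y$ the map $P\mapsto\oZ(\ell_y+\ind_P)$ sees only convex-body data, and the full Klain--Schneider theorem says the odd summand can be anything; killing it requires genuinely functional information that you have not identified. The paper sidesteps this entirely: it invokes only Klain's volume theorem (Theorem~\ref{Klain}), which applies to \emph{origin-symmetric} bodies and yields $\oZ(\ell_y+\ind_K)=\zeta(y)V_n(K)$ with no odd residue to remove.

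\textbf{Cell-by-cell extension.} Even granting $\oZ(\ell_y+\ind_P)=\zeta(y)V_n(P)$ for all $P$, your ``iterated cell-by-cell application of the valuation identity'' does not work as stated. If $u$ is piecewise affine with slopes $y_1,y_2$ on adjacent cells, the natural pieces $\ell_{y_i}+c_i+\ind_P$ have pointwise minimum $\min(\ell_{y_1}+c_1,\ell_{y_2}+c_2)+\ind_P$, which is \emph{concave} in the relevant direction and hence not in $\fconvs$. The valuation identity therefore cannot be applied to split $u$ into its affine pieces. A reduction of this type does exist (see Lemma~\ref{le:reduction}), but it is a nontrivial result, not a routine inclusion--exclusion.

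\textbf{How the paper proceeds instead.} The paper runs an induction on $n$. Having $\zeta$ only on origin-symmetric bodies, it applies the induction hypothesis to the simple valuations $u_E\mapsto\oZ(u_E\infconv u_F)$ on each factor of an orthogonal splitting $\R^n=E\oplus F$, and matches the resulting densities with $\zeta$ on orthogonal boxes. This shows that $\oY:=\oZ-\oZ_\zeta$ is a simple, continuous, epi-translation invariant valuation vanishing on all orthogonal cylinder functions. Proposition~\ref{prop:onehom} then forces $\oY$ to be epi-homogeneous of degree $1$, hence epi-additive by Corollary~\ref{epi-additve}. The decisive new ingredient is Lemma~\ref{convex_approx}, the density of \emph{generalized functional zonoids} in $\fconvs$: combined with simplicity and epi-additivity (Proposition~\ref{prop:simple_epi-additive}), this gives $\oY\equiv 0$. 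None of these steps appears in your outline, and your route does not provide a substitute for them.
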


\noindent
Here a valuation on $\fconvs$ is called \emph{simple} if it vanishes on functions with lower dimensional domains and the domain of $u\in\fconvs$ is defined as $\dom u\deq\{x\in\R^n\colon u(x)<\infty\}$.
The proof of Theorem~\ref{Klain-Schneider} is given in Section~\ref{se:simple_vals}, where we also show that functional analogs of generalized zonoids are dense in $\fconvs$ (see Lemma~\ref{convex_approx} below).

The second critical element in Klain's proof is an extension of valuations defined on lower dimensional sets to valuations defined on general convex bodies. While such an extension is straightforward on convex bodies, the dependence of functional intrinsic volumes on the classes $\Had{j}{n}$ constitutes an obstacle in the functional setting.
In Section \ref{se:smooth_vals}, we establish such an extension when we restrict to so-called smooth valuations (see Section \ref{se:smooth_vals} for the definition) and thereby give a new proof of a result recently established by Knoerr \cite{Knoerr3}, who transferred Alesker's notion of smooth valuation from convex bodies to convex functions.
A version of Knoerr's result is stated next.
\goodbreak
\pagebreak

\begin{theorem}[\!\cite{Knoerr3}]
\label{hadwiger_smooth}
A functional $\,\oZ\colon\fconvs \to \R$ is a smooth, epi-translation and rotation invariant valuation if and only if there exist  functions 
$\varphi_0,\dots, \varphi_n\in C_c^\infty([0,\infty))$  such that
\begin{equation*}
\oZ(u)=  \sum_{j=0}^n\int_{\R^n} \varphi_j(\vert y\vert^2) \d \Psi^n_j(u, y)
\end{equation*}
for every $u\in\fconvs$.
Moreover,
\begin{equation}
\label{eq:smooth_differentiable}
\oZ(u)=\sum_{j=0}^n \int_{\R^n} \varphi_j(\vert\nabla u(x)\vert^2)\big[\Hess u(x)\big]_{n-j} \d x
\end{equation}
for every $u\in\fconvs\cap C_+^2(\R^n)$.
\end{theorem}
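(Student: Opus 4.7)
The plan is to proceed by induction on $n$, combining the functional Klain--Schneider theorem (Theorem~\ref{Klain-Schneider}) with the extension theorem for smooth valuations to be established in Section~\ref{se:smooth_vals}. The ``if'' direction is routine: each summand in the claimed representation is manifestly a smooth, epi-translation and rotation invariant valuation, and the agreement of the two representations on $\fconvs\cap C_+^2(\R^n)$ follows from \eqref{eq:ozz_on_c2p} applied with $\zeta(t)\deq\varphi_j(t^2)$. For the ``only if'' direction, the base case $n=1$ is immediate from Theorem~\ref{Klain-Schneider}: after subtracting the constant value of $\oZ$ on zero-dimensional functions (which accounts for $\varphi_0$), the remaining simple valuation is integration of some $\zeta\in C_c(\R)$ against $\nabla u$, and smoothness together with invariance under $y\mapsto-y$ forces $\zeta(y)=\varphi_1(y^2)$ with $\varphi_1\in C_c^\infty([0,\infty))$.

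For the inductive step, fix a hyperplane $E\cong\R^{n-1}$ through the origin and restrict $\oZ$ to those $u\in\fconvs$ with $\dom u\subset E$, identified canonically with elements of $\fconvsE$. This restriction inherits smoothness, epi-translation invariance on $E$, and invariance under $\SO(E)\subset\SO(n)$, so by the inductive hypothesis it has the form $\sum_{j=0}^{n-1}\oZZ{j}{\zeta_j}^{n-1}$ with $\zeta_j(t)=\varphi_j(t^2)$ for certain $\varphi_0,\dots,\varphi_{n-1}\in C_c^\infty([0,\infty))$. The extension theorem of Section~\ref{se:smooth_vals} then asserts that $\sum_{j=0}^{n-1}\oZZ{j}{\zeta_j}^n$ is a smooth valuation on $\fconvs$ whose restriction to $E$-supported functions agrees with the above sum. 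Subtracting it from $\oZ$ produces a smooth, epi-translation and rotation invariant valuation $\tilde\oZ$ that vanishes on every $u$ supported in $E$, hence by rotation invariance on every $u$ supported in any hyperplane, and therefore on every $u$ with lower-dimensional domain. Thus $\tilde\oZ$ is simple, and Theorem~\ref{Klain-Schneider} gives $\tilde\oZ(u)=\int_{\dom u}\zeta_n(\nabla u(x))\d x$ with $\zeta_n\in C_c(\R^n)$. Rotation invariance makes $\zeta_n$ radial, and smoothness promotes it to $\zeta_n(y)=\varphi_n(|y|^2)$ with $\varphi_n\in C_c^\infty([0,\infty))$, yielding the $j=n$ summand.

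The main technical obstacle is the extension step. A general continuous valuation on $\fconvsE$ does not extend to a valuation on $\fconvs$---the Hadwiger classes $\Had{j}{n-1}$ and $\Had{j}{n}$ are genuinely different, and even when an extension exists it requires delicate choices of lower-dimensional mixed Monge--Amp\`ere data. Smoothness is what rescues the construction: any $\varphi\in C_c^\infty([0,\infty))$ lies inside every Hadwiger class, and the Monge--Amp\`ere representation in the theorem's statement provides a canonical candidate extension. Verifying that this extension is itself smooth, that no further lower-dimensional correction is needed, and that the lower-dimensional restriction of $\oZ$ does match the correct sum $\sum_{j=0}^{n-1}\oZZ{j}{\zeta_j}^{n-1}$, will be the bulk of the work and is the content of Section~\ref{se:smooth_vals}.
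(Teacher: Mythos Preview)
Your overall architecture---induction on $n$, restrict to a hyperplane, extend the lower-dimensional representation to $\fconvs$, subtract, and classify the simple remainder via the functional Klain--Schneider theorem---is exactly the paper's approach. However, the extension step as you state it is wrong. You claim that $\sum_{j=0}^{n-1}\oZZ{j}{\zeta_j}^{n}$, with the \emph{same} densities $\zeta_j$ obtained from the $(n-1)$-dimensional inductive hypothesis, restricts on $E$-supported functions to $\sum_{j=0}^{n-1}\oZZ{j}{\zeta_j}^{n-1}$. It does not: by Lemma~\ref{extend} (equivalently \eqref{eq:extend_abel}), the restriction of $\oZZ{j}{\zeta}^{n}$ to functions with $\dom u\subseteq E$ is $\oZZ{j}{\Abel\zeta}^{n-1}$, where $\Abel$ is the Abel transform. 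So the ``canonical candidate extension'' you describe is off by an Abel transform; what the paper actually does is take the $(n-1)$-dimensional densities $\bar\varphi_j$ and set $\varphi_j\deq\iAbel\bar\varphi_j$, using that the inverse Abel transform preserves $C_c^\infty([0,\infty))$. This is precisely the place where smoothness is essential---not merely because $C_c^\infty$ sits inside every Hadwiger class, but because the inverse Abel transform is only available on sufficiently regular functions (cf.\ Lemma~\ref{le:abel_sq_diffable} and the discussion in Section~\ref{se:lower}). Once you make this correction, the rest of your outline goes through and matches the paper's proof.
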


\noindent Here, $\Psi^n_j(u,\cdot)$ are  Hessian measures associated to $u$ (see Section \ref{se:hessian} for the definition). It follows from the continuity of all functionals combined with \eqref{eq:ozz_on_c2p} and \eqref{eq:smooth_differentiable}  that the valuation $\oZ$ in Theorem~\ref{hadwiger_smooth} can be written as
$$\oZ=\sum_{j=0}^n \oZZ{j}{\zeta_j}^n$$
where $\zeta_j\in C_c^\infty([0,\infty))$ is given by $\zeta_j(t)\deq\varphi_j(t^2)$ for $t\geq 0$ and $0\leq j\leq n$.
We remark that Theorem~\ref{hadwiger_smooth} can be obtained as a consequence of Theorem \ref{thm:hadwiger_convex_functions}. However, neither Knoerr's proof nor our new proof of his result use Theorem \ref{thm:hadwiger_convex_functions}. 
\goodbreak

In Section~\ref{se:proof_hadwiger}, we  give a new proof of Theorem~\ref{thm:hadwiger_convex_functions} that makes use of Theorem \ref{hadwiger_smooth}. 
In fact, we prove an integral-geometric version of the Hadwiger theorem on convex functions, which was established by the authors in \cite{Colesanti-Ludwig-Mussnig-6} (see Theorem~\ref{thm2:hadwiger_convex_functions_ck} below) and which was shown there to be equivalent to Theorem~\ref{thm:hadwiger_convex_functions}. Functional Cauchy--Kubota formulas,  which were established in \cite{Colesanti-Ludwig-Mussnig-6} (see Theorem \ref{cauchy_function} below), are the essential tool in this new approach and in our new proof of the Hadwiger theorem on convex functions.

In Section \ref{se:lower}, we discuss valuations on functions with lower dimensional domains. In particular, we address the question when a functional defined on functions with lower dimensional domains can be extended to general convex functions and the connection of this question to the Abel transform. We also explain why Klain's proof of Hadwiger's theorem cannot be transferred immediately to  general continuous valuations in the functional case.

\section{Preliminaries}
We work in $n$-dimensional Euclidean space $\R^n$, with $n\ge 1$, endowed with the Euclidean norm $\vert \cdot\vert $ and the usual scalar product
$\langle \cdot,\cdot\rangle$. We also use coordinates, $x=(x_1,\dots,x_n)$, for $x\in\R^n$ and write $e_1, \dots, e_n$ for the vectors of the standard orthonormal basis. For $k\le n$, we often identify $\R^k$ with $
\{x\in\R^n\colon x_{k+1}=\dots=x_n=0\}$. Let $\Bn\deq\{x\in\R^n\colon \vert x\vert \le 1\}$ be the Euclidean unit ball and $\sn$ the unit sphere in $\R^n$. 
Write $\kappa_j$ for the $j$-dimensional volume of $B^j$ and set $\kappa_0\deq1$.

\subsection{Convex Bodies}\label{convex bodies} A basic reference on convex bodies is the book by Schneider \cite{Schneider:CB2}. Let $\cK^n$ denote the set of convex bodies in $\R^n$. For $K\in\cK^n$, its {\em support function} $h_K\colon\R^n\to \R$ is defined as
$$h_K(y)\deq \max\nolimits_{x\in K}\langle x,y \rangle.$$
It is a one-homogeneous and convex function that determines $K$. The topology on $\cK^n$ is induced by the Hausdorff distance which is defined for $K,L\in\cK^n$ as $\max_{y\in\sn} \vert h_K(y)-h_L(y)\vert$.

\subsection{Valuations on Convex Bodies}
We use \cite[Chapter~6]{Schneider:CB2} as a general reference. 
The following result is the Klain--Schneider theorem, which provides a complete classification of simple, continuous and translation invariant valuations.

\begin{theorem}[Klain \cite{Klain95}, Schneider \cite{Schneider:simple}]
\label{KlainSchneider}
A functional $\oZ\colon \cK^{n}\to \R$ is a simple, continuous, translation invariant valuation if and only if there exist $\gamma\in\R$  and an odd continuous function $\sigma\colon\sn\to \R$ such that
$$\oZ(K)=\gamma\,V_{n}(K)+ \int_{\sn} \sigma(y) \d S_{n-1}(K,y)$$
for every $K\in\cK^{n}$.
\end{theorem}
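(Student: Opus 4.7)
My plan is to verify the ``if'' direction by direct inspection and to prove the ``only if'' direction by decomposing $\oZ$ under the reflection $K\mapsto -K$. Writing $\oZ^{\pm}(K)\deq\tfrac12(\oZ(K)\pm\oZ(-K))$, the even part $\oZ^{+}$ and the odd part $\oZ^{-}$ are again simple, continuous, translation invariant valuations and can be treated separately.

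For the ``if'' direction, $V_n$ is manifestly a simple, continuous, translation invariant valuation, and for each Borel set $\omega\subseteq\sn$ the map $K\mapsto S_{n-1}(K,\omega)$ inherits the valuation, translation invariance and weak continuity properties of the surface area measure. These properties pass to $K\mapsto\int_{\sn}\sigma(y)\,\d S_{n-1}(K,y)$ for any continuous $\sigma$. Simplicity uses that if $\dim K\le n-1$, then $K$ lies in a hyperplane with unit normal $\pm u$ and $S_{n-1}(K,\cdot)=V_{n-1}(K)(\delta_u+\delta_{-u})$, so the integral collapses to $V_{n-1}(K)(\sigma(u)+\sigma(-u))$, which vanishes since $\sigma$ is odd. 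This also explains the necessity of oddness in the ``only if'' direction.

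For the even part I would establish Klain's volume theorem: $\oZ^{+}$ is a constant multiple of $V_n$, by induction on $n$. The base case follows from Cauchy's functional equation combined with continuity. For the inductive step, restricting $\oZ^{+}$ to convex bodies lying in a fixed hyperplane $H$ and invoking the induction hypothesis yields $\oZ^{+}(K)=\kappa(H)\,V_{n-1}(K)$ for $K\subset H$, with $\kappa$ a continuous even function on $\Grass{n}{n-1}$ (the \emph{Klain function}). The crux is the vanishing lemma $\kappa\equiv 0\Rightarrow\oZ^{+}\equiv 0$, proved by evaluating $\oZ^{+}$ on right cylinders over $(n-1)$-dimensional bases via a Cavalieri-type identity together with the valuation property, and approximating arbitrary bodies by cylinders; applying the lemma to $\oZ^{+}-\gamma V_n$ with $\gamma$ chosen so that the Klain functions match then closes the even case.

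For the odd part $\oZ^{-}$, I would follow Schneider's route via polytopes. McMullen's polytope theorem decomposes $\oZ^{-}$ on polytopes into homogeneous components of degrees $0,\dots,n$. The top component is a multiple of $V_n$ and hence even, so it vanishes, and simplicity together with induction on $n$ eliminates every component of degree at most $n-2$. The surviving degree-$(n-1)$ component takes the form $\sum_F f(u_F)\,V_{n-1}(F)$ on a polytope $P$, where $F$ runs over the facets and $u_F$ are their outer unit normals; $f$ is odd on the set of realized facet normals. Continuity of $\oZ^{-}$ on $\cK^n$ forces $f$ to extend to a continuous odd function $\sigma\colon\sn\to\R$, giving $\oZ^{-}(P)=\int_{\sn}\sigma(y)\,\d S_{n-1}(P,y)$ on polytopes; density of polytopes in $\cK^n$, weak continuity of $S_{n-1}$, and continuity of $\oZ^{-}$ extend this formula to every $K\in\cK^n$. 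I expect the main obstacle to be the vanishing lemma in Klain's volume theorem, whose proof is the most delicate geometric step; a second non-routine point is the promotion of $f$ from the set of facet normals realized by polytopes to a continuous function on all of $\sn$ in the odd case.
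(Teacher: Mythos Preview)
The paper does not prove this theorem. Theorem~\ref{KlainSchneider} is stated in the preliminaries (Section~2.2) as a classical result, attributed to Klain~\cite{Klain95} and Schneider~\cite{Schneider:simple}, and no proof is given; in fact the paper only uses the weaker Theorem~\ref{Klain} (Klain's volume theorem on origin-symmetric bodies) in its own arguments. So there is no ``paper's proof'' to compare your proposal against.

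That said, your outline is essentially the standard route from the original references: split $\oZ$ into its even and odd parts under $K\mapsto -K$, handle the even part via Klain's volume theorem, and handle the odd part via Schneider's facet-function argument through McMullen's homogeneous decomposition on polytopes. One remark on your sketch of the vanishing lemma in the even case: the ``Cavalieri-type identity on right cylinders plus approximation by cylinders'' is not how Klain's argument actually runs, and it is not clear this works as stated (a simple valuation need not behave like an integral over slices). Klain's route is rather: if the Klain function vanishes, then $\oZ^{+}$ vanishes on parallelotopes (via the valuation property and simplicity), hence on zonotopes, hence on generalized zonoids (which are dense among centrally symmetric bodies), and finally evenness together with a symmetrization/decomposition argument extends this to all bodies. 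Apart from that, your plan is sound and matches the literature.
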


\noindent Here, $S_j(K,\cdot)$ is the $j$th area measure of $K\in\cK^n$  (see \cite[Section 2.5]{Schneider:CB2} for the definition). 

\goodbreak
In the proof of the functional version of the Klain--Schneider theorem (Theorem~\ref{Klain-Schneider}), we use the following  weaker version of the above result.

\begin{theorem}[Klain \cite{Klain95}]
\label{Klain}
If $\,\oZ\colon \cK^{n}\to \R$ is a simple, continuous, translation invariant valuation, then   there exists $\gamma\in\R$  such that
$$\oZ(K)=\gamma\,V_{n}(K)$$
for every $K\in\cK^{n}$ that is origin-symmetric.
\end{theorem}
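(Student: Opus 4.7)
Since the full Klain--Schneider classification (Theorem~\ref{KlainSchneider}) is already available, my plan is to deduce Theorem~\ref{Klain} as an immediate corollary via a parity argument on the surface area measure. Given a simple, continuous, translation invariant valuation $\oZ$, Theorem~\ref{KlainSchneider} supplies $\gamma\in\R$ and an odd continuous $\sigma\colon\sn\to\R$ with
\begin{equation*}
\oZ(K)=\gamma\, V_n(K)+\int_{\sn}\sigma(y)\d S_{n-1}(K,y)
\end{equation*}
for every $K\in\cK^n$. When $K=-K$, reflecting the body reflects the surface area measure, so $S_{n-1}(K,\cdot)$ is an even measure on $\sn$. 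Changing variables $y\mapsto -y$ shows that the integral of the odd function $\sigma$ against an even measure equals its own negative, hence vanishes; this leaves $\oZ(K)=\gamma V_n(K)$, as required.

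If a self-contained argument independent of Theorem~\ref{KlainSchneider} is preferred, the plan would be to adapt Klain's original three-step strategy. First, for axis-parallel boxes $B=[0,a_1]\times\cdots\times[0,a_n]$, cutting by a coordinate hyperplane produces two sub-boxes whose lower-dimensional interface is annihilated by simplicity; translation invariance reduces $\oZ(B)$ to a function of side lengths, and the valuation property then makes it additive in each variable, so continuity upgrades this to linearity via a Cauchy functional equation and yields $\oZ(B)=\gamma V_n(B)$ for some $\gamma\in\R$. Second, one would extend the identity to an arbitrary parallelepiped $P$ by a translation-only scissor-congruence: sliced by axis-parallel hyperplanes, its prismatic pieces can be translated to assemble an axis-parallel box of the same volume, with every internal cut lower-dimensional and hence invisible to $\oZ$. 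Third, one would extend from parallelepipeds to all origin-symmetric bodies by Hausdorff density: the zonotopal tiling of each zonotope by parallelepipeds spanned by $n$-subsets of its generators promotes the identity from parallelepipeds to zonotopes, and density of generalized zonoids in the cone of centrally symmetric convex bodies, combined with continuity of both $\oZ$ and $V_n$, completes the extension to all origin-symmetric $K$.

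\emph{Main obstacle.} Along the self-contained route, the critical difficulty is the scissor-congruence in the second step: with only translations at one's disposal (no rotations), a sheared parallelepiped must be dissected into pieces that all match, up to translation, the pieces of a target axis-parallel box of the same volume, with every residual piece lower-dimensional. A secondary subtlety is that in dimension $n\geq 3$ not every centrally symmetric body is a zonoid, so the third step must pass through generalized zonoids rather than zonoids directly. The derivation from Theorem~\ref{KlainSchneider} above replaces both difficulties by a soft parity argument on the surface area measure, and is therefore the route I would adopt in the present context.
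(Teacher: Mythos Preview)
The paper does not prove Theorem~\ref{Klain} at all: it is stated in the preliminaries as a known result of Klain, explicitly described as a ``weaker version'' of Theorem~\ref{KlainSchneider}, and is used later without further justification. So there is no proof in the paper to compare your proposal against.

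Your first route---deducing the statement from Theorem~\ref{KlainSchneider} by the parity argument---is correct and is exactly the sense in which the paper treats Theorem~\ref{Klain} as an immediate consequence of Theorem~\ref{KlainSchneider}. The key fact you use, that $S_{n-1}(-K,B)=S_{n-1}(K,-B)$ and hence $S_{n-1}(K,\cdot)$ is even when $K=-K$, is standard. Your self-contained outline is also a faithful sketch of Klain's original argument, including the correct caveat that for $n\ge 3$ one must pass through generalized zonoids rather than zonoids. Either route is acceptable here; the first is the one implicit in the paper's presentation.
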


\noindent
Here, a convex body $K\in\cK^{n}$ is called \emph{origin-symmetric} if $K=-K$ and $-K\deq\{-x\colon x\in K\}$ is the reflection of $K$ at the origin.
\goodbreak

Let $\Val{n}$ denote the space of continuous, translation invariant valuations on $\cK^n$. With the norm
$$\Vert \oZ \Vert \deq \sup\{\vert \oZ(K)\vert\colon K\in\cK^n, K\subseteq \Bn\},$$
the space $\Val{n}$ is a Banach space. A natural representation of  $\gln$ on $\Val{n}$ is defined by
$$\vartheta\mapsto \oZ\circ \vartheta^{-1}.$$
A valuation $\oZ \in \Val{n}$
is \emph{smooth} if the mapping $\vartheta \mapsto  \oZ\circ\vartheta^{-1}$ from $\gln$ into the Banach space $\Val{n}$ is infinitely differentiable. The subspace $\Vals{n}$ of smooth valuations is dense in $\Val{n}$. 
Note that for a linear subspace $E$ of $\R^n$, the restriction of a smooth valuation $\oZ\colon\cK^n\to\R$ to convex bodies in $E$ is again smooth.

Alesker \cite{Alesker2006} showed that valuations from $\Vals{n}$ are those which can be represented as integrals of smooth differential forms on the so-called normal cycle.  Combined with a result by Z\"ahle \cite{Zahle1986} on the representation of generalized curvature measures by differential forms, we obtain the following statement: For $\xi\in C(\sn)$ and 
$1\le j\le n-1$, the valuation $\oZ\colon\cK^n\to \R$, defined by
\begin{equation}\label{smooth_valuation}
\oZ(K):=\int_{\sn} \xi(y)\d S_j(K,y),
\end{equation}
is smooth if and only if $\xi\in C^{\infty}(\sn)$. Here, we also used that if the integral in \eqref{smooth_valuation} vanishes for a given $\xi\in C(\sn)$ for all $K\in\cK^n$, then $\xi$ is linear (and hence smooth) by \cite[Theorem 3.5]{Weil80}.

\goodbreak

\subsection{Convex Functions}\label{convex functions}
We collect some basic results and properties of convex functions. For standard references on this subject, we refer to the books by Rockafellar \cite{Rockafellar} and Rockafellar \& Wets \cite{RockafellarWets}.

Let $\fconvx$ be the set of proper, lower semicontinuous, convex functions $u\colon\R^n\to(-\infty,\infty]$. 
Every function $u\in\fconvx$ is uniquely determined by its \emph{epi-graph}
$$\epi u:= \{(x,t)\in \R^n\times \R\colon u(x)\leq t\}$$
which is a closed, convex subset of $\R^{n+1}$. 
For $t\in\R$, we write
$$\{u\leq t\}:=\{x\in\R^n\colon u(x)\leq t\}$$
for its \emph{sublevel sets}, which are closed, convex subsets of $\R^n$, as $u$ is lower semicontinuous. 
\goodbreak

The standard topology on $\fconvx$ and its subsets is induced by epi-convergence. A sequence of functions $u_k\in\fconvx$ is \emph{epi-convergent} to $u\in\fconvx$ if for every $x\in\R^n$:
\begin{enumerate}
    \item[(i)] $u(x)\leq \liminf_{k\to\infty} u_k(x_k)$ for every sequence $x_k\in\R^n$ that converges to $x$;
    \item[(ii)] $u(x)=\lim_{k\to\infty} u_k(x_k)$ for at least one sequence $x_k\in\R^n$ that converges to $x$.
\end{enumerate}
Note that the limit of an epi-convergent sequence of extended real-valued functions on $\R^n$ is always lower semicontinuous.

Let $\fconvf$ denote the set of finite-valued convex functions in $\fconvx$. A sequence of functions $v_k\in\fconvf$ is epi-convergent to $v\in\fconvf$ if and only if $v_k$ converges pointwise to $v$, which by convexity is equivalent to uniform convergence on compact sets. 

For $u\in\fconvx$, we denote by $u^*\in\fconvx$ its \emph{Legendre--Fenchel transform} or \emph{convex conjugate}, which is defined by
$$u^*(y):=\sup\nolimits_{x\in\R^n} \big(\langle x,y \rangle - u(x) \big)$$
for $y\in\R^n$. Since $u$ is lower semicontinuous, $u^{**}=u$. Moreover, $u\in\fconvs$ if and only if $u^*\in\fconvf$ and $u\in \fconvs\cap C_+^2(\R^n)$ if and only if $u^*\in \fconvs \cap C_+^2(\R^n)$. 
In addition, a sequence of functions $u_k\in\fconvx$ is epi-convergent to $u\in\fconvx$ if and only if $u_k^*$ is epi-convergent to $u^*$.

Let $\Grass{k}{n}$ be the Grassmannian of $k$-dimensional subspaces in $\R^n$, where $0\le k\le n$. 
For a subspace $E\in\Grass{k}{n}$, let $\fconvsE$ denote the set of functions $u\colon E\to (-\infty, \infty]$ that are proper, lower semicontinuous, super-coercive, and convex.
We identify $\fconvsE$ with the space
\begin{equation*}
\{u\in\fconvs\colon \dom u \subseteq E\},
\end{equation*}
which gives a continuous embedding of $\fconvsE$ into $\fconvs$. For $E\in\Grass{k}{n}$ and a  function $u\in\fconvs$, we define the \emph{projection function} $\proj_E u\colon E\to (-\infty,\infty]$ by
$$\proj_E u(x_E):= \min\nolimits_{z\in E^\perp} u(x_E+z)$$
where $x_E\in E$ and  $E^\perp$ is the orthogonal complement of $E$. Since $\min_{z\in E^\perp} u(x_E+z) \leq t$ if and only if there exists $z\in E^\perp$ such that $u(x_E+z)\leq t$, we have
\begin{equation}
\label{eq:proj_fct_lvl_set}
\{\proj_E u \leq t\} = \proj_E \{u\leq t\}
\end{equation}
for every $t\in\R$, where $\proj_E$ on the right side of \eqref{eq:proj_fct_lvl_set} denotes the usual orthogonal projection of sets onto $E$. In addition,
$\epi (\proj_E u) = \proj_{E\times \R} (\epi u)$.
In particular, it is clear that $\proj_E u \in \fconvsE$. If $u\in\fconvsE$ is differentiable, then $\nabla_E\, u$ denotes the gradient of $u$ taken with respect to the ambient space $E$. 

\goodbreak
For $u\in\fconvx$, the \emph{subdifferential} of $u$ at $x\in\R^n$ is defined by
$$\partial u(x) := \{y\in\R^n\colon u(z)\geq u(x)+\langle y, z-x\rangle \text{ for } z\in\R^n \}.$$
If $u$ is differentiable at $x$, then $\partial u(x)=\{\nabla u(x)\}$. Note that a convex function is differentiable almost everywhere on the interior of its domain. For $x,y\in\R^n$, we have $y\in\partial u(x)$ if and only if $x\in\partial u^*(y)$.

\begin{lemma}[\!\cite{Colesanti-Ludwig-Mussnig-6}, Lemma 3.1]
\label{le:proj_subgradient}
Let $E\subseteq \R^n$ be a linear subspace and $u\in\fconvs$. If $x_E,y_E\in E$ are such that $y_E\in\partial \proj_E u(x_E)$, then for every $x\in\R^n$ such that $\proj_E u(x_E)=u(x)$ we also have $y_E\in\partial u(x)$. In particular, such $x\in\R^n$ exist and  $\proj_E x = x_E$.
\end{lemma}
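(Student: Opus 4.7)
The plan is to prove both the existence statement and the subgradient inclusion by essentially unwinding the definitions of $\proj_E u$ and of the subdifferential, using only super-coercivity and lower semicontinuity of $u$.

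First I would handle existence. Since $y_E\in\partial\proj_E u(x_E)$ the value $\proj_E u(x_E)$ is finite, so the function $z\mapsto u(x_E+z)$ on $E^\perp$ is proper; it is also convex and lower semicontinuous. Because $u\in\fconvs$ is super-coercive on $\R^n$, its restriction to the affine flat $x_E+E^\perp$ is super-coercive on $E^\perp$, so its sublevel sets are bounded and the minimum is attained at some $z^*\in E^\perp$. Setting $x\deq x_E+z^*$ gives $u(x)=\min_{z\in E^\perp}u(x_E+z)=\proj_E u(x_E)$ and $\proj_E x=x_E$ by construction.

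Next I would verify $y_E\in\partial u(x)$ for such an $x$. Let $z\in\R^n$ be arbitrary and decompose $z=z_E+z_\perp$ with $z_E\deq\proj_E z\in E$ and $z_\perp\in E^\perp$. From the very definition,
\[
u(z)=u(z_E+z_\perp)\ge \min\nolimits_{w\in E^\perp}u(z_E+w)=\proj_E u(z_E).
\]
Applying the subgradient inequality for $\proj_E u$ at $x_E$ (with subgradient $y_E\in E$),
\[
\proj_E u(z_E)\ge \proj_E u(x_E)+\langle y_E, z_E-x_E\rangle = u(x)+\langle y_E, z_E-x_E\rangle.
\]
Finally, since $y_E\in E$ while $x-x_E=z^*\in E^\perp$ and $z-z_E=z_\perp\in E^\perp$, the difference $z-x=(z_E-x_E)+(z_\perp-z^*)$ has its $E^\perp$-component orthogonal to $y_E$, so
\[
\langle y_E, z-x\rangle=\langle y_E, z_E-x_E\rangle.
\]
Combining the two inequalities yields $u(z)\ge u(x)+\langle y_E, z-x\rangle$ for every $z\in\R^n$, which is exactly $y_E\in\partial u(x)$.

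There is no serious obstacle: the argument is really a tautology once one observes that the projected subgradient inequality in $E$ can be lifted to $\R^n$ because $y_E$ is blind to the $E^\perp$-direction, and that $u$ dominates $\proj_E u\circ \proj_E$ on $\R^n$ with equality at $x$. The only place one must be slightly careful is in the existence of the minimizer, where super-coercivity (not mere coercivity) is what rules out escape of minimizing sequences to infinity along $E^\perp$. Everything else is bookkeeping on the orthogonal decomposition $\R^n=E\oplus E^\perp$.
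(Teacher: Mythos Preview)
The paper does not actually prove this lemma; it is quoted with a citation to \cite{Colesanti-Ludwig-Mussnig-6}, so there is no proof in the present paper to compare against. Your argument is the natural direct verification and is correct: lower semicontinuity together with coercivity of $u$ along the fiber $x_E+E^\perp$ yields a minimizer $x=x_E+z^*$, and the subgradient inequality for $\proj_E u$ at $x_E$ lifts to $\R^n$ precisely because $y_E\in E$ annihilates the $E^\perp$-component of $z-x$ while $u\ge\proj_E u\circ\proj_E$ everywhere with equality at $x$. Your computation works verbatim for \emph{every} $x$ in the fiber $x_E+E^\perp$ achieving the minimum, not just the one you construct, so the ``for every $x$'' part of the statement is covered as well.

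One small remark on your closing comment: ordinary coercivity of $z\mapsto u(x_E+z)$ already suffices to guarantee that a lower semicontinuous proper convex function attains its infimum; super-coercivity is stronger than what is needed at this step (though it is of course what membership in $\fconvs$ gives you, and it does imply the required coercivity along the fiber).
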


For two functions $u_1,u_2\in\fconvs$ we denote by $u_1\infconv u_2\in\fconvs$ their \emph{epi-sum} or  \emph{infimal convolution} which is defined as
$$(u_1\infconv u_2)(x):=\inf\nolimits_{x_1+x_2=x} u_1(x_1)+u_2(x_2)$$
for $x\in\R^n$. Note that
$$\epi (u_1\infconv u_2)=\epi u_1 + \epi u_2,$$
where   Minkowski addition of subsets in $\R^{n+1}$ is used on the right side. We define \emph{epi-multiplication} by $\lambda>0$ on $\fconvs$ in the following way. For $u\in\fconvs$ and $x\in\R^n$, set
$$\lambda\sq u(x):=\lambda\, u\left( \frac x\lambda \right).$$
This corresponds to rescaling the epi-graph of $u$ by the factor $\lambda$, that is, $\epi (\lambda\sq u)=\lambda \epi u$.

\subsection{The Abel Transform}
\label{se:abel}
For $\zeta\in C_b((0,\infty))$, the  \emph{Abel transform} of $\zeta$ is defined as
\begin{equation*}
\Abel\zeta(t):=\int_{-\infty}^{\infty} \zeta(\sqrt{s^2+t^2}) \d s
\end{equation*}
for $t\in (0,\infty)$. Using the substitution $\sqrt{s^2+t^2}=t \cosh(r)$,
we may rewrite this as
\begin{equation}
\label{eq:def_abel_trans2}
\Abel\zeta(t)=2 \int_0^{\infty} \zeta(\sqrt{s^2+t^2}) \d s = 2 t \int_0^{\infty} \zeta(t \cosh(r)) \cosh(r) \d r.
\end{equation}
For $k\ge 1$ and $\zeta\in C_b((0,\infty))$, let $\Abel^k \zeta \deq (\Abel \circ\cdots\circ \Abel) \zeta$ where $\Abel$ is applied $k$ times.
Consequently,
\begin{equation}
\label{eq:abel_k}
\Abel^k \zeta(t)=\int_{\R^k} \zeta(\sqrt{|x|^2+t^2}) \d x
\end{equation}
for every $t>0$.
The following property is well-known.

\goodbreak
\begin{lemma}
\label{le:abel_sq_diffable}
For $\zeta\in C_b((0,\infty))$, we have
\begin{equation}\label{eq:abel_twice}
\Abel^2 \zeta(t) = 2 \pi \int_t^{\infty} \zeta(s) s \d s
\end{equation}
for every $t>0$, and, in particular, $\Abel^2 \zeta\in C_b^1((0,\infty))$.
\end{lemma}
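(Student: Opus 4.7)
The plan is to use the iterated representation \eqref{eq:abel_k} for $k=2$ and convert the integral to polar coordinates. Starting from
\[
\Abel^2\zeta(t)=\int_{\R^2}\zeta\bigl(\sqrt{|x|^2+t^2}\bigr)\d x,
\]
which is provided by \eqref{eq:abel_k}, I would switch to polar coordinates $x=(r\cos\theta,r\sin\theta)$ to obtain
\[
\Abel^2\zeta(t)=2\pi\int_0^\infty \zeta\bigl(\sqrt{r^2+t^2}\bigr)\,r\d r,
\]
and then apply the substitution $s=\sqrt{r^2+t^2}$, which yields $s\d s = r\d r$ and transforms the limits from $r\in(0,\infty)$ to $s\in(t,\infty)$. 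This gives the asserted identity \eqref{eq:abel_twice}.

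For the regularity claim, I would argue directly from the integral representation just obtained. Since $\zeta\in C_b((0,\infty))$, the map $s\mapsto \zeta(s)s$ is continuous on $(0,\infty)$ and has bounded support; in particular it is integrable on every subinterval of the form $(t,\infty)$ with $t>0$. Hence the fundamental theorem of calculus applies, and $\Abel^2\zeta$ is continuously differentiable on $(0,\infty)$ with
\[
\frac{\d}{\d t}\Abel^2\zeta(t) = -2\pi\,\zeta(t)\,t.
\]
Finally, if $\mathop{\rm supp}(\zeta)\subseteq(0,M]$, then $\Abel^2\zeta(t)=0$ for all $t\ge M$, so both $\Abel^2\zeta$ and its derivative have support contained in $(0,M]$, giving $\Abel^2\zeta\in C_b^1((0,\infty))$.

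The proof is essentially routine once \eqref{eq:abel_k} is available; there is no real obstacle. The only mild subtleties are (i)~making sure the substitution $s=\sqrt{r^2+t^2}$ is legitimate (it is a $C^1$-diffeomorphism between $(0,\infty)$ and $(t,\infty)$), and (ii)~interpreting the notation $C_b^1((0,\infty))$ consistently with the earlier definition of $C_b((0,\infty))$, i.e., continuously differentiable functions with bounded support inside $(0,\infty)$.
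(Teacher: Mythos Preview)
Your proof is correct and follows essentially the same route as the paper: apply \eqref{eq:abel_k} with $k=2$, pass to polar coordinates, and substitute $s=\sqrt{r^2+t^2}$; the regularity claim is then immediate from the integral formula. Your argument is in fact slightly more explicit about the derivative and the support bound than the paper's, which simply notes that these are easy to see.
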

\begin{proof}
Using \eqref{eq:abel_k} and polar coordinates, we obtain
$$\Abel^2 \zeta(t) = \int_{\R^2} \zeta(\sqrt{|x|^2+t^2})\d x = 2\pi \int_0^{\infty} \zeta(\sqrt{r^2+t^2})\,r \d r$$
for every $t\in(0,\infty)$, which combined with the substitution $s=\sqrt{r^2+t^2}$ gives \eqref{eq:abel_twice}.
It is now easy to see that $\Abel^2\zeta$ has continuous derivative and bounded support.
\end{proof}

For $\zeta\in C_b^1((0,\infty))$, the inverse Abel transform is given by $$\iAbel\zeta(s)=-\frac{1}{\pi} \int_s^{\infty} \frac{\zeta'(t)}{\sqrt{t^2-s^2}} \d t$$ for $s\in (0,\infty)$. Using the substitution $t = s \cosh(r)$, this can be written as $$\iAbel\zeta(s)=-\frac{1}{\pi} \int_0^\infty \zeta'(s \cosh(r)) \d r.$$ Note that if $\zeta\in C_c^\infty([0,\infty))$, then also $\iAbel \zeta\in C_c^\infty([0,\infty)).$

\subsection{Hessian Measures}\label{se:hessian}
We recall the definition of two families of measures used in our results. They are both marginals of more general Hessian measures, see \cite{ColesantiHug2000,Colesanti-Ludwig-Mussnig-3}. We remark that one of these families of Hessian measures was introduced by Trudinger and Wang \cite{Trudinger:Wang1997, Trudinger:Wang1999}.

For $u\in\fconvs\cap C_+^2(\R^n)$ and $0\leq j \leq n$, there exists a non-negative Borel measure $\Psi_j^n(u,\cdot)$ such that
\begin{equation}
\label{eq:def_psi_j_n}
\int_{\R^n} \beta(y) \d\Psi^n_j(u,y)=\int_{\R^n} \beta(\nabla u(x)) \big[\Hess u(x)\big]_{n-j}\d x
\end{equation}
for every Borel function $\beta\colon\R^n\to[0,\infty)$. The \emph{Hessian measures} $\Psi_j^n(u,\cdot)$, as functions of $u$, are continuous (with respect to the topology induced by weak convergence of measures) and continuously extend to $\fconvs$. That is, if a sequence $u_k$ in $\fconvs$  epi-converges to $u\in\fconvs$, then
 \begin{equation*}\label{weak convergence}
\lim_{k\to+\infty}\int_{\R^n}\beta(y)\d\Psi_j^n(u_k,y)=\int_{\R^n}\beta(y)\d\Psi_j^n(u,y)
\end{equation*}
for every $\beta\in C_c(\R^n)$. Since $\fconvs\cap C_+^2(\R^n)$ is dense in $\fconvs$, it follows that \eqref{eq:def_psi_j_n} uniquely determines the measure $\Psi_j^n(u,\cdot)$. For the special case $j=n$, we have
\begin{equation}\label{n-gradient}
\int_{\R^n} \beta(y) \d\Psi^n_n(u,y)= \int_{\dom u} \beta(\nabla u(x))\d x
\end{equation}
for every $u\in\fconvs$ and $\beta\in C_c(\R^n)$.
Note that for $0\le j\le n$ the map $u\mapsto \Psi^n_j(u,\cdot)$ is a valuation on $\fconvs$, which is continuous, epi-translation invariant and rotation covariant.
Here, we say that $\Psi^n_j$ is \emph{rotation covariant} if $\Psi^n_j(u\circ \vartheta^{-1}, B)= \Psi^n_j(u,\vartheta^{-1} B)$ for every $u\in\fconvs$, every $\vartheta\in\SO(n)$ and every Borel set $B\subset \R^n$. So, in particular, we easily obtain the following results, where we use \eqref{n-gradient} in the first lemma.

\begin{lemma}[\!\cite{Colesanti-Ludwig-Mussnig-3}]  
	\label{gradient integral}
    For $\zeta\in C_c(\R^n)$, the functional $\oZ\colon\fconvs\to\R$, defined by
    $$
    \oZ(u)=\int_{\dom u}\zeta(\nabla u(x))\d x,
    $$
    is a continuous, epi-translation invariant valuation.
\end{lemma}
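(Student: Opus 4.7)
The strategy is simply to express $\oZ$ as the integral of $\zeta$ against the Hessian-type measure $\Psi^n_n(u,\cdot)$ and then transfer all three required properties from $\Psi^n_n$ to $\oZ$. By the identity \eqref{n-gradient}, which is valid for every $\beta\in C_c(\R^n)$, the assumption $\zeta\in C_c(\R^n)$ allows us to rewrite
\begin{equation*}
\oZ(u)=\int_{\dom u}\zeta(\nabla u(x))\d x=\int_{\R^n}\zeta(y)\d\Psi^n_n(u,y)
\end{equation*}
for every $u\in\fconvs$. This rewriting is the only non-trivial step; from here, the three desired properties are immediate consequences of the corresponding properties of $u\mapsto\Psi^n_n(u,\cdot)$ stated in Section~\ref{se:hessian}.

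First, continuity: if $u_k\to u$ in $\fconvs$ (epi-convergence), then $\Psi^n_n(u_k,\cdot)\to \Psi^n_n(u,\cdot)$ weakly against test functions in $C_c(\R^n)$, and since $\zeta\in C_c(\R^n)$, this yields $\oZ(u_k)\to\oZ(u)$. Second, the valuation property: it is stated that $u\mapsto \Psi^n_n(u,\cdot)$ is a (measure-valued) valuation, meaning that whenever $u,v\in\fconvs$ are such that $u\vee v, u\wedge v\in\fconvs$, the identity
\begin{equation*}
\Psi^n_n(u,\cdot)+\Psi^n_n(v,\cdot)=\Psi^n_n(u\vee v,\cdot)+\Psi^n_n(u\wedge v,\cdot)
\end{equation*}
holds as an equality of Borel measures on $\R^n$. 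Integrating $\zeta$ against both sides gives the valuation identity for $\oZ$. Third, epi-translation invariance: for any translation $\tau$ of $\R^n$ and any $\alpha\in\R$, the measure $\Psi^n_n(u\circ\tau^{-1}+\alpha,\cdot)$ coincides with $\Psi^n_n(u,\cdot)$, since the gradient of $u\circ\tau^{-1}+\alpha$ at $\tau(x)$ equals $\nabla u(x)$ and the change of variables on $\dom(u\circ\tau^{-1}+\alpha)=\tau(\dom u)$ has unit Jacobian; integrating $\zeta$ against this identity yields $\oZ(u\circ\tau^{-1}+\alpha)=\oZ(u)$.

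I do not expect any serious obstacle: the lemma is essentially a corollary of the stated properties of $\Psi^n_n$, and the only thing to verify is that the integral representation \eqref{n-gradient} for $\beta\in C_c(\R^n)$ directly matches the definition of $\oZ$. The compact support of $\zeta$ is what makes the continuity step work against the weak convergence of $\Psi^n_n(u_k,\cdot)$, so this hypothesis cannot be weakened without further argument.
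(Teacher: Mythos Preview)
Your proposal is correct and matches the paper's own approach exactly: the paper states the properties of $u\mapsto\Psi^n_j(u,\cdot)$ (continuous, epi-translation invariant, valuation) and then says the lemma follows immediately from the identity \eqref{n-gradient}, which is precisely the rewriting you use.
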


\begin{lemma}[\!\cite{Colesanti-Ludwig-Mussnig-4}, Theorem 1.2]  
	\label{fine zeta}
    For $\zeta\in C_c([0,\infty))$, the functional $\oZ\colon\fconvs\to\R$, defined by
    $$
    \oZ(u)=\int_{\R^n}\zeta(|y|)\d\Psi^n_j(u,y),
    $$
    is a continuous, epi-translation and rotation invariant valuation for every $0\leq j \leq n$.
\end{lemma}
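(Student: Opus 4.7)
The plan is to reduce each claimed property of $\oZ$ to the corresponding property of the Hessian measure $u\mapsto \Psi_j^n(u,\cdot)$ recalled in the paragraph preceding the statement. The key observation is that the test function $\beta\colon\R^n\to\R$ defined by $\beta(y)\deq\zeta(|y|)$ belongs to $C_c(\R^n)$ whenever $\zeta\in C_c([0,\infty))$, and moreover $\beta$ is rotation invariant, i.e., $\beta(\vartheta y)=\beta(y)$ for every $\vartheta\in\SO(n)$ and $y\in\R^n$. Thus we may write
$$\oZ(u)=\int_{\R^n}\beta(y)\d\Psi_j^n(u,y).$$

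For the valuation property, I would take $u,v\in\fconvs$ with $u\vee v, u\wedge v\in\fconvs$. Since $u\mapsto \Psi_j^n(u,\cdot)$ is a measure-valued valuation, we have, as an identity of Borel measures on $\R^n$,
$$\Psi_j^n(u,\cdot)+\Psi_j^n(v,\cdot)=\Psi_j^n(u\vee v,\cdot)+\Psi_j^n(u\wedge v,\cdot),$$
and integration of $\beta$ against both sides yields $\oZ(u)+\oZ(v)=\oZ(u\vee v)+\oZ(u\wedge v)$. Continuity is obtained at once from the continuity of $u\mapsto\Psi_j^n(u,\cdot)$ with respect to weak convergence of measures tested against $C_c(\R^n)$: if $u_k$ epi-converges to $u$ in $\fconvs$, then $\int \beta\d\Psi_j^n(u_k,\cdot)\to\int\beta\d\Psi_j^n(u,\cdot)$, i.e., $\oZ(u_k)\to\oZ(u)$, since $\beta\in C_c(\R^n)$.

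Epi-translation invariance follows from the corresponding property of $\Psi_j^n$: for every translation $\tau$ on $\R^n$ and every $\alpha\in\R$ we have $\Psi_j^n(u\circ\tau^{-1}+\alpha,\cdot)=\Psi_j^n(u,\cdot)$ as measures on $\R^n$, so the integral of $\beta$ is unchanged. Finally, for rotation invariance I would apply the rotation covariance of $\Psi_j^n$: for $\vartheta\in\SO(n)$, a change of variables yields
$$\oZ(u\circ\vartheta^{-1})=\int_{\R^n}\beta(y)\d\Psi_j^n(u\circ\vartheta^{-1},y)=\int_{\R^n}\beta(\vartheta y)\d\Psi_j^n(u,y)=\int_{\R^n}\beta(y)\d\Psi_j^n(u,y)=\oZ(u),$$
where the last step uses $\beta(\vartheta y)=\zeta(|\vartheta y|)=\zeta(|y|)=\beta(y)$. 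There is no genuine obstacle here: the only thing to verify carefully is that $\beta$ is an admissible test function and that it is rotation invariant, after which all four properties transfer from $\Psi_j^n$ to $\oZ$.
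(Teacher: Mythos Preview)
Your proof is correct and follows exactly the approach indicated in the paper: the lemma is stated there without a written-out proof, preceded only by the remark that the map $u\mapsto\Psi_j^n(u,\cdot)$ is a continuous, epi-translation invariant and rotation covariant valuation, from which the result ``easily'' follows. Your argument makes precisely this transfer of properties explicit, with the radial test function $\beta(y)=\zeta(|y|)\in C_c(\R^n)$ playing the expected role.
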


For $v\in\fconvf\cap C_+^2(\R^n)$ and $0\leq j \leq n$, let $\Phi_j^n(v,\cdot)$ be the non-negative Borel measure that has the property that
\begin{equation}
\label{eq:hess_def}
\int_{\R^n} \beta(x) \d\Phi^n_j(v,x)=\int_{\R^n} \beta(x) \big[\Hess v(x)\big]_{j}\d x
\end{equation}
for every Borel function $\beta\colon\R^n\to[0,\infty)$. By continuity the measure $\Phi_j^n(v,\cdot)$ extends to $v\in\fconvf$. As a consequence of \cite[Theorem 8.2]{Colesanti-Ludwig-Mussnig-3}, we have
\begin{equation}
\label{eq:int_u_psi_int_v_phi}
\Psi_j^n(u,\cdot)=\Phi_j^n(u^*,\cdot)
\end{equation}
for every $u\in\fconvs$ and $0\leq j\leq n$. The measure $\Phi_n^n(v,\cdot)$ is the Monge--Amp\`ere measure of $v$.

\goodbreak
For orthogonal and complementary subspaces $E$ and $F$ of $\R^n$ and $x\in\R^n$, we write $x=(x_E,x_F)$ with $x_E\in E$ and $x_F\in F$. Let $\fconvfE\deq\{v_E\colon E\to \R\colon v_E \text{ is convex}\}$ and define $\fconvfF$ accordingly. For $v_E\in\fconvfE$ and $v_F\in\fconvfF$, we define $v_E+v_F\in\fconvf$ as
$$(v_E+v_F)(x):=v_E(x_E)+v_F(x_F)$$
for every $x=(x_E,x_F)\in\R^n$.

\begin{lemma}[\!\cite{Colesanti-Ludwig-Mussnig-5}, Lemma 4.8]
\label{le:decompose_hessian_measure}
Let $E$ and $F$ be orthogonal and complementary subspaces of $\R^n$ such that $\dim E=k$ with $1\le k<n$. If $v_E\in\fconvfE$ and $v_F\in\fconvfF$, then 
$$\Phi_l^n(v_E+v_F,B)=\sum_{i=0\vee(k+l-n)}^{k\wedge l} \Phi_i^k(v_E,B\cap E)\, \Phi_{l-i}^{n-k}(v_F,B\cap F)$$
for every $0\leq l \leq n$ and every Borel set $B\subseteq \R^n$.
\end{lemma}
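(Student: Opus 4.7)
The plan is to establish the identity first in the smooth case $v_E \in C_+^2(E)$, $v_F \in C_+^2(F)$, where the Hessian measures have explicit densities, and then to extend to arbitrary $v_E \in \fconvfE$, $v_F \in \fconvfF$ via approximation and the continuity of Hessian measures under epi-convergence.

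For the smooth case, choose orthonormal coordinates adapted to the splitting $\R^n = E \oplus F$. At $x=(x_E,x_F)$, the Hessian of $v \deq v_E + v_F$ is the block diagonal matrix $\Hess v_E(x_E) \oplus \Hess v_F(x_F)$, whose eigenvalues are the disjoint union of those of the two blocks. A standard Vieta-type identity for the elementary symmetric functions of a disjoint union then gives
\begin{equation*}
[\Hess v(x)]_l = \sum_{i=0\vee(k+l-n)}^{k\wedge l} [\Hess v_E(x_E)]_i \, [\Hess v_F(x_F)]_{l-i},
\end{equation*}
the range of $i$ being forced by $0\le i \le k$ and $0 \le l-i \le n-k$. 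Multiplying by $\ind_B$ for a Borel set $B \subseteq \R^n$, applying Fubini in the factorization $\R^n = E \times F$, and using the definition \eqref{eq:hess_def}, one obtains the stated identity for $B = B_E \times B_F$ a Borel rectangle. Since product measures on $E \times F$ are uniquely determined by their values on rectangles, the identity extends to all Borel $B$ in the product-measure form $\Phi_i^k(v_E, B \cap E)\,\Phi_{l-i}^{n-k}(v_F, B \cap F)$.

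For general $v_E, v_F$, approximate by mollifications $v_E^{(m)} \in C_+^2(E)$, $v_F^{(m)} \in C_+^2(F)$, for instance by convolving with a smooth bump and adding $\tfrac{1}{m}|\cdot|^2$ to enforce strict convexity. These epi-converge in $\fconvfE$, $\fconvfF$, hence $v_E^{(m)} + v_F^{(m)} \to v_E + v_F$ in $\fconvf$, and the continuity of the Hessian measures under epi-convergence, applied to each of the three families $\Phi_l^n$, $\Phi_i^k$, $\Phi_{l-i}^{n-k}$, transfers the identity to the limit.

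The main obstacle is the measure-theoretic passage to the limit: weak convergence of Radon measures only guarantees convergence against continuous bounded test functions, so the identity for a fixed Borel set $B$ cannot simply be read off by plugging $\ind_B$ into the limit. One therefore first verifies the identity on a generating $\pi$-system of continuity sets of the limit measures (e.g.\ open rectangles $B_E \times B_F$ whose boundaries are negligible for all measures involved), and then invokes the uniqueness of Radon measures on $E \times F$ to upgrade to arbitrary Borel sets.
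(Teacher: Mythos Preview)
This lemma is quoted without proof from \cite{Colesanti-Ludwig-Mussnig-5}; the present paper contains no argument to compare against. Your approach---the block-diagonal Hessian together with the product identity for elementary symmetric polynomials in the $C_+^2$ case, followed by approximation and weak continuity of the measures $\Phi_j$---is the standard route and is essentially the argument given in the source.

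One genuine clarification is needed, and it concerns the statement rather than your method. Read literally, with $B\cap E$ and $B\cap F$ the set-theoretic intersections inside $\R^n$, the displayed identity is \emph{false} for general Borel $B$: take $n=2$, $E=\R e_1$, $F=\R e_2$, $v_E(x_1)=\tfrac12 x_1^2$, $v_F(x_2)=\tfrac12 x_2^2$, $l=1$, and $B=[1,2]^2$; then $\Phi_1^2(v_E+v_F,B)=2$ while $B\cap E=B\cap F=\emptyset$. The intended content (and the only form used later, in Lemma~\ref{extend} and Lemma~\ref{le:lower_dim_fconvfs}) is that $\Phi_l^n(v_E+v_F,\cdot)$ equals the sum of the \emph{product measures} $\Phi_i^k(v_E,\cdot)\otimes\Phi_{l-i}^{n-k}(v_F,\cdot)$ on $E\times F$; equivalently, the displayed formula holds for rectangles $B=B_E\times B_F$, with ``$B\cap E$'' read as $B_E$. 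Your sentence ``the identity extends to all Borel $B$ in the product-measure form $\Phi_i^k(v_E, B \cap E)\,\Phi_{l-i}^{n-k}(v_F, B \cap F)$'' runs these two readings together; once you separate them---establish equality of measures on rectangles, then invoke uniqueness of Radon measures on a generating $\pi$-system---your argument is clean, and your final paragraph on passing weak limits through continuity sets is exactly the right way to close the approximation step.
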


\goodbreak
The next result follows from \cite[Lemma 2.4]{Alesker_cf}. See also (4.10) and (4.11) in \cite{Colesanti-Ludwig-Mussnig-5}.
\begin{lemma}
\label{le:delta}
Let $\bar{x}_1,\dots,\bar{x}_n\in\R\backslash\{0\}$ and set $\bar{x}\deq(\bar{x}_1,\ldots,\bar{x}_n)$. For $\bar{v}\in\fconvf$, defined as
$$\bar{v}(x_1,\ldots,x_n):=\frac 12 \sum_{i=1}^n \vert x_i -\bar{x}_i\vert$$
for $(x_1,\ldots,x_n)\in\R^n$, we have $\Phi_n^n(\bar{v},\cdot) = \delta_{\bar{x}}$, where $\delta_{\bar{x}}$ denotes the Dirac point measure concentrated at $\bar{x}$.
\end{lemma}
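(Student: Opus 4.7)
The plan is to compute $\Phi_n^n(\bar v,\cdot)$ by duality: apply \eqref{eq:int_u_psi_int_v_phi} to transfer the problem to computing $\Psi_n^n(\bar v^*,\cdot)$, which can then be read off directly from the gradient formula \eqref{n-gradient}.

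The first step is to determine the Legendre--Fenchel conjugate of $\bar v$. Because $\bar v$ is separable, with $\bar v(x)=\sum_{i=1}^n \bar v_i(x_i)$ where $\bar v_i(t)\deq \tfrac{1}{2}|t-\bar x_i|$, the conjugate decomposes as $\bar v^*(y)=\sum_{i=1}^n \bar v_i^*(y_i)$. A one-dimensional computation, using the substitution $u=t-\bar x_i$, gives
$$\bar v_i^*(s)=\sup_{t\in\R}\bigl(st-\tfrac{1}{2}|t-\bar x_i|\bigr)=\bar x_i s+\sup_{u\in\R}\bigl(su-\tfrac{1}{2}|u|\bigr),$$
and the remaining supremum equals $0$ if $|s|\le 1/2$ and $+\infty$ otherwise. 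Hence $\bar v^*(y)=\langle\bar x,y\rangle$ for $y\in C\deq[-\tfrac{1}{2},\tfrac{1}{2}]^n$ and $\bar v^*(y)=+\infty$ otherwise. Since $\dom \bar v^*=C$ is bounded, $\bar v^*$ is trivially super-coercive; together with convexity, lower semicontinuity and properness this shows $\bar v^*\in\fconvs$ and $(\bar v^*)^*=\bar v$.

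Applying \eqref{eq:int_u_psi_int_v_phi} with $u=\bar v^*$ yields $\Phi_n^n(\bar v,\cdot)=\Psi_n^n(\bar v^*,\cdot)$. On the interior of $C$ the function $\bar v^*$ is affine with $\nabla\bar v^*(x)=\bar x$, and $\partial C$ has Lebesgue measure zero, so \eqref{n-gradient} gives
$$\int_{\R^n}\beta(y)\d\Psi_n^n(\bar v^*,y)=\int_C \beta(\nabla\bar v^*(x))\d x=\beta(\bar x)$$
for every $\beta\in C_c(\R^n)$, because $C$ has unit $n$-dimensional Lebesgue measure. By the Riesz representation theorem, $\Psi_n^n(\bar v^*,\cdot)$ coincides with $\delta_{\bar x}$ as a Borel measure, which is the claim. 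No serious obstacle arises: the only delicate point is the non-smoothness of $\bar v^*$, but \eqref{n-gradient} is valid for every $u\in\fconvs$, so no regularization is needed; the hypothesis $\bar x_i\ne 0$ plays no role here and presumably serves downstream applications.
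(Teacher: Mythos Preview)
Your proof is correct. The paper does not actually prove this lemma but merely attributes it to \cite[Lemma~2.4]{Alesker_cf} (with a cross-reference to (4.10)--(4.11) in \cite{Colesanti-Ludwig-Mussnig-5}), so there is no detailed argument to compare against. Your duality computation---identifying $\bar v^*=\ell_{\bar x}+\ind_{[-1/2,1/2]^n}$ and then invoking \eqref{eq:int_u_psi_int_v_phi} and \eqref{n-gradient}---is a clean, self-contained alternative that uses only tools already present in the paper. Your remark about the hypothesis $\bar x_i\neq 0$ is also correct: it is not used in the lemma itself but is needed downstream so that $\bar v\in\fconvfs$, i.e.\ is $C^2$ near the origin, when Lemma~\ref{le:dual_hessian_vals_on_fconvfs} is applied.
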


\goodbreak
We require the following result on Hessian measures of functions with lower dimensional domains. 

\begin{lemma}\label{extend}
Let $1\le k<n$. If $u\in\fconvs$ is such that $\dom u \subseteq E$ for some affine subspace $E\subset \R^n$ with $\dim E=k$, then
\begin{equation}
\label{eq:extend}
\int_{\R^n} \zeta(|y|^2)\d\Psi_j^n(u,y) = \int_{E} \int_{E^\perp} \zeta(\vert y_E\vert ^2+\vert y_{E^\perp}\vert^2) \d y_{E^\perp} \d\Psi_j^k(u\vert_{E},y_E)
\end{equation}
for every $1\le j\leq k$ and $\zeta\in C_c([0,\infty))$.
\end{lemma}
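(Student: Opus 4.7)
The plan is to use the conjugate duality $\Psi_j^n(u,\cdot) = \Phi_j^n(u^*,\cdot)$ from \eqref{eq:int_u_psi_int_v_phi} to transfer the problem into one about the Hessian measure $\Phi_j^n$ of $v\deq u^*$. After reducing to the case where $E$ is a linear subspace (using epi-translation invariance of both $\Psi_j^n$ and $\Psi_j^k$ together with a translation by a vector in $E$), the crucial observation is that $\dom u \subseteq E$ forces $v$ to be independent of the component along $F\deq E^\perp$: since $u(x)=+\infty$ for $x\notin E$,
\begin{equation*}
v(y) = \sup_{x\in\R^n}\bigl(\langle x,y\rangle - u(x)\bigr) = \sup_{x\in E}\bigl(\langle x, y_E\rangle - u|_E(x)\bigr) = v_E(y_E),
\end{equation*}
where $v_E\deq (u|_E)^*\in\fconvfE$. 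Thus $v = v_E + v_F$ with $v_F\equiv 0$ on $F$, which places us in the setting of Lemma~\ref{le:decompose_hessian_measure}.

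The main step is to apply that lemma with this decomposition. Since $[\Hess v_F]_l \equiv 0$ for $l\geq 1$ while $[\Hess v_F]_0 \equiv 1$, the measure $\Phi_l^{n-k}(v_F,\cdot)$ vanishes for $l\geq 1$ and equals the $(n-k)$-dimensional Lebesgue measure $\hm^{n-k}$ on $F$ for $l=0$. Consequently only the index $i=j$ survives in the sum from the decomposition lemma, which, after passing from its product-set form to the corresponding test-function form by a standard approximation of positive Radon measures, gives
\begin{equation*}
\int_{\R^n}\beta(y)\,\mathrm{d}\Phi_j^n(v,y) = \int_E \int_F \beta(y_E,y_F)\,\mathrm{d} y_F\,\mathrm{d}\Phi_j^k(v_E,y_E)
\end{equation*}
for every $\beta\in C_c(\R^n)$. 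The last step is to apply this to $\beta(y)=\zeta(|y|^2)$, which is compactly supported by assumption on $\zeta$, note that $|y|^2 = |y_E|^2 + |y_F|^2$ in the decomposition $y=(y_E,y_F)$, and invoke \eqref{eq:int_u_psi_int_v_phi} inside $E$ to identify $\Phi_j^k(v_E,\cdot) = \Psi_j^k(u|_E,\cdot)$. This yields the desired identity \eqref{eq:extend}.

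The only real obstacle is the mild measure-theoretic step of lifting Lemma~\ref{le:decompose_hessian_measure} from its stated Borel-set form to the integral form against continuous test functions. Since all measures involved are positive and Radon and the test integrand is compactly supported, this reduces to a Fubini/monotone class argument. It is also worth remarking that $v_F\equiv 0$ is admissible as input to the decomposition lemma because $\fconvfF$ consists of all finite-valued convex functions on $F$ and therefore contains constants.
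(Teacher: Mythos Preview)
Your argument is correct and follows essentially the same route as the paper: both proofs pass to $v=u^*$ via \eqref{eq:int_u_psi_int_v_phi}, observe that $\dom u\subseteq E$ forces $v$ to depend only on the $E$-component, deduce the product structure $\d\Phi_j^n(v)=\d\Phi_j^k(v_E)\otimes \d y_{E^\perp}$, and then dualize back with $(u|_E)^*=v_E$. The paper additionally uses rotation covariance to reduce to $E=\R^k$ and simply asserts the product decomposition, whereas you stay with a general linear $E$ and justify the decomposition by invoking Lemma~\ref{le:decompose_hessian_measure} and identifying the single surviving term; these are cosmetic differences rather than a genuinely different approach.
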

\begin{proof}
By the epi-translation invariance and rotation covariance of $\Psi_j^n$, we may assume without loss of generality that $\dom u \subset \R^k$. By \eqref{eq:int_u_psi_int_v_phi}, we have
$$\int_{\R^n} \zeta(|y|^2)\d\Psi_j^n(u,y)=\int_{\R^n} \zeta(|x|^2)\d\Phi_{j}^n(v,x),$$
where $v=u^*$ is in $\fconvf$. Since $\dom u\subseteq \R^k$, it follows that $v(x_1,\ldots,x_n)=w(x_1,\ldots,x_k)$ with $w\in\operatorname{Conv}(\R^k;\R)$ and therefore
$$\d\Phi_{j}^n(v,(x_1,\ldots,x_n))= \d\Phi_{j}^k(w,(x_1,\ldots,x_k))\d x_{k+1}\cdots \d x_n.$$
Thus, it follows from \eqref{eq:int_u_psi_int_v_phi} that
$$
\int\limits_{\R^n} \zeta(|y|^2)\d\Psi_j^n(u,y) = \int\limits_{\R^k} \int\limits_{\R^{n-k}} \zeta(|x_E|^2+|z|^2) \d z \d\Phi_{j}^k(w,x_E) =\int\limits_{\R^k} \int\limits_{\R^{n-k}} \zeta(|y_E|^2+|z|^2) \d z \d\Psi_j^k(u\vert_{\R^k},y_E)
$$
where we used that $w^*=u\vert_{\R^k}$, when taking the Legendre transform on $\R^k$.
\end{proof}

We remark that if we use \eqref{eq:extend} with $\zeta(t^2)=\xi(t)$
for $\xi\in C_c([0,\infty))$ and $t\geq 0$, then by \eqref{eq:abel_k}
\begin{equation}
\label{eq:extend_abel}
\int_{\R^n} \xi(|y|) \d\Psi_j^n(u,y) = \int_{E} \Abel^{n-k} \xi(|y_E|) \d\Psi_j^k(u\vert_{E},y_E)
\end{equation}
for every $u\in \fconvs$ and $1\leq j \leq k < n$ such that $\dom u \subseteq E$ for some affine subspace $E\subset \R^n$ with $\dim E=k$.

\subsection{Valuations on $\fconvs$}
\label{se:vals_on_convex_fcts}
We say that a valuation $\oZ\colon\fconvs\to\R$ is \emph{epi-homogeneous} of degree $j$ if $\oZ(\lambda\sq u)=\lambda^j\,\oZ(u)$ for every $\lambda>0$ and $u\in\fconvs$.
We recall the homogeneous decomposition theorem on the space $\fconvs$.

\begin{theorem}[\!\cite{Colesanti-Ludwig-Mussnig-4}, Theorem 1]
\label{thm:mcmullen_cvx_functions}
If $\,\oZ\colon\fconvs\to\R$ is a continuous and epi-translation invariant valuation, then there are continuous, epi-translation invariant valuations $\oZ_j\colon\fconvs\to\R$ for $j=0,\dots, n$ that are epi-homogeneous of degree $j$ such that $\oZ=\oZ_0+\cdots + \oZ_n$.
\end{theorem}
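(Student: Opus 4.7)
The plan is to imitate McMullen's classical proof of the homogeneous decomposition for translation invariant continuous valuations on $\cK^n$, working with infimal convolution in place of Minkowski addition. The identities $\epi(u_1\infconv u_2)=\epi u_1+\epi u_2$ and $\epi(\lambda\sq u)=\lambda\,\epi u$ make this analogy natural, and both the valuation property and epi-translation invariance translate into familiar invariance properties on the closed convex epi-graphs in $\R^{n+1}$.

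\textbf{Step 1 (polynomiality).} First I would show that for every $u\in\fconvs$ the map $\lambda\mapsto \oZ(\lambda\sq u)$ on $(0,\infty)$ extends to a polynomial of degree at most $n$. More generally, I would establish by induction on $m$ that
\[
p(\lambda_1,\ldots,\lambda_m)\deq \oZ(\lambda_1\sq u_1\infconv\cdots\infconv \lambda_m\sq u_m)
\]
extends to a polynomial on $[0,\infty)^m$ of total degree at most $n$ for any $u_1,\ldots,u_m\in\fconvs$. The base case is handled by iterating the valuation identity on suitable max/min decompositions of $k\sq u \infconv u = (k+1)\sq u$: this produces a recursion on $k\in\mathbb{N}$ from which $k\mapsto \oZ(k\sq u)$ is polynomial of degree at most $n$, with super-coercivity being what controls the degree by ensuring the relevant sublevel sets stay bounded. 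Continuity of $\oZ$, together with the identity $\lambda\sq u = (1/q)\sq ((q\lambda)\sq u)$, then promotes the polynomial structure from positive integers to all of $(0,\infty)$.

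\textbf{Step 2 (extracting the components).} Write $\oZ(\lambda\sq u)=\sum_{j=0}^n \lambda^j \oZ_j(u)$ and define $\oZ_j$ as the coefficient of $\lambda^j$. By Vandermonde inversion each $\oZ_j(u)$ is a fixed linear combination of values $\oZ(\lambda_i\sq u)$ for a finite choice of positive scalars $\lambda_0<\cdots<\lambda_n$, so continuity and epi-translation invariance of $\oZ_j$ follow immediately from those of $\oZ$. The valuation property for $\oZ_j$ comes from comparing the coefficient of $\lambda^j$ on both sides of
\[
\oZ(\lambda\sq u)+\oZ(\lambda\sq v)=\oZ(\lambda\sq(u\vee v))+\oZ(\lambda\sq(u\wedge v)),
\]
using $\lambda\sq(u\vee v)=(\lambda\sq u)\vee(\lambda\sq v)$ and the analogous identity for $\wedge$. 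Epi-homogeneity of $\oZ_j$ of degree $j$ is immediate from $\mu\sq\lambda\sq u = (\mu\lambda)\sq u$.

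\textbf{Main obstacle.} The crux is Step 1. Since $\epi u$ is unbounded, McMullen's theorem for convex bodies in $\R^{n+1}$ does not apply directly, and one cannot simply read off polynomiality. My workaround is to first prove polynomiality on a suitable dense subclass of $\fconvs$ with polyhedral structure—for instance, functions whose Legendre conjugates are piecewise linear—where the combinatorial McMullen argument applies after truncation, and then extend to all of $\fconvs$ by continuity. The most delicate point is the bookkeeping: at each step of the induction, the max and min functions produced by the valuation identity must be verified to remain in $\fconvs$, in particular to be proper, lower semicontinuous, and super-coercive.
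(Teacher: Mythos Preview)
The paper does not prove this statement; it is quoted verbatim from \cite{Colesanti-Ludwig-Mussnig-4}, Theorem~1, without argument. So there is no proof here to compare against, and your proposal has to stand on its own.

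Your Step~2 is fine: once polynomiality of $\lambda\mapsto\oZ(\lambda\sq u)$ of degree at most $n$ is known, the Vandermonde extraction of the $\oZ_j$ and the verification of continuity, epi-translation invariance, the valuation property, and epi-homogeneity all go through exactly as you describe.

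The gap is in Step~1, specifically in the degree bound. Working with epi-graphs in $\R^{n+1}$, the identities $\epi(\lambda\sq u)=\lambda\,\epi u$ and $\epi(u_1\infconv u_2)=\epi u_1+\epi u_2$ mean that any McMullen-type combinatorial argument carried out on epi-graphs will a priori produce a polynomial of degree at most $n+1$, not $n$. Your sentence ``super-coercivity being what controls the degree by ensuring the relevant sublevel sets stay bounded'' does not supply the missing step: super-coercivity guarantees that sublevel sets are bounded in $\R^n$, but this has no direct bearing on the degree of $\lambda\mapsto\oZ(\lambda\sq u)$, and the truncation you suggest would still leave you with bodies in $\R^{n+1}$ and hence degree $\le n+1$. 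What actually kills the top degree is the second half of epi-translation invariance, the invariance under $u\mapsto u+\alpha$, and you never invoke it.

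The argument in \cite{Colesanti-Ludwig-Mussnig-4} avoids redoing McMullen's combinatorics altogether. One dualizes to $\oZ^*$ on $\fconvf$ and uses the embedding $\cT$ from Section~\ref{se:smooth_vals} of the present paper, setting $\tilde\oZ(K)\deq\oZ^*(h_K(\cdot,-1))$ for $K\in\cK^{n+1}$; dual epi-translation invariance makes $\tilde\oZ$ a continuous, translation invariant valuation on $\cK^{n+1}$, so McMullen's theorem for convex bodies gives $\tilde\oZ=\sum_{j=0}^{n+1}\tilde\oZ_j$. The component $\tilde\oZ_{n+1}=c\,V_{n+1}$ is then shown to vanish: for $K\in\cK^{n+1}$ and $K'=K+[0,e_{n+1}]$ one has $h_K(\cdot,-1)=h_{K'}(\cdot,-1)$, hence $\tilde\oZ(\lambda K)=\tilde\oZ(\lambda K')$ for all $\lambda>0$, forcing $\tilde\oZ_j(K)=\tilde\oZ_j(K')$ for each $j$; since $V_{n+1}(K)\ne V_{n+1}(K')$, this yields $c=0$. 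Density of $\{h_K(\cdot,-1):K\in\cK^{n+1}\}$ in $\fconvf$ then transfers the decomposition back. If you want to rescue your direct approach on $\fconvs$, you need an analogous mechanism that exploits the vertical translation invariance to eliminate the degree-$(n+1)$ term; as written, that mechanism is absent.
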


We say that a functional $\oZ\colon\fconvs\to\R$ is \emph{epi-additive} if
$$\oZ(u_1 \infconv u_2) =  \oZ(u_1)+\oZ(u_2)$$
for every  $u_1,u_2\in\fconvs$. 
The following result is a consequence of Theorem \ref{thm:mcmullen_cvx_functions}.

\begin{corollary}[\!\cite{Colesanti-Ludwig-Mussnig-4}, Corollary 22]\label{epi-additve}
If $\,\oZ\colon\fconvs\to \R$ is a continuous, epi-translation invariant valuation that is epi-homogeneous of degree $1$, then $\oZ$ is epi-additive.
\end{corollary}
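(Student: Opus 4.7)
The strategy adapts McMullen's classical derivation of Minkowski additivity on convex bodies: a $1$-homogeneous, continuous, translation-invariant valuation satisfies $\oZ(K+L) = \oZ(K)+\oZ(L)$ because its expansion in a Minkowski combination is polynomial and joint $1$-homogeneity forces this polynomial to be linear. Here I would run the analogous argument with $\infconv$ in place of Minkowski sum, $\sq$ in place of scalar dilation, and Theorem~\ref{thm:mcmullen_cvx_functions} as the source of the required homogeneous decomposition.

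Fix $u_1,u_2\in\fconvs$ and consider
$$F(s,t) \deq \oZ(s\sq u_1 \infconv t\sq u_2), \qquad s,t>0.$$
Using $\lambda\sq(u\infconv v)=(\lambda\sq u)\infconv(\lambda\sq v)$ together with the $1$-epi-homogeneity of $\oZ$, one verifies the joint scaling $F(\mu s,\mu t)=\mu F(s,t)$. Since $\lambda\sq w$ epi-converges to $\ind_{\{0\}}$ as $\lambda\to 0^+$ (a direct consequence of the super-coercivity of elements of $\fconvs$) and $\ind_{\{0\}}$ is the neutral element of $\infconv$, continuity of $\oZ$ extends $F$ to all of $[0,\infty)^2$ with boundary values $F(s,0)=s\,\oZ(u_1)$ and $F(0,t)=t\,\oZ(u_2)$.

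The crucial step is to prove that $F$ is a polynomial in $(s,t)$; once this is known, joint $1$-homogeneity forces $F(s,t)=\alpha s+\beta t$, the boundary values identify $\alpha=\oZ(u_1)$ and $\beta=\oZ(u_2)$, and evaluation at $(s,t)=(1,1)$ yields the desired $\oZ(u_1\infconv u_2)=\oZ(u_1)+\oZ(u_2)$. To produce the polynomial expansion, I would apply Theorem~\ref{thm:mcmullen_cvx_functions} to the auxiliary functional $v\mapsto\oZ(w\infconv v)$ for fixed $w\in\fconvs$, decomposing it into epi-homogeneous pieces so that $\oZ(w\infconv(t\sq u_2))=\sum_{j=0}^n t^j\,\Phi_j(w,u_2)$ becomes a polynomial in $t$, and then repeat the argument in the $s$-variable.

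The principal obstacle is justifying that $v\mapsto\oZ(w\infconv v)$ itself satisfies the valuation hypothesis of Theorem~\ref{thm:mcmullen_cvx_functions}. Continuity and epi-translation invariance are immediate from the identity $(w\infconv v)\circ\tau^{-1}+\alpha = w\infconv(v\circ\tau^{-1}+\alpha)$, but the valuation identity is delicate: infimal convolution commutes with pointwise minimum, giving $(w\infconv v_1)\wedge(w\infconv v_2)=w\infconv(v_1\wedge v_2)$, yet only satisfies the one-sided inequality $(w\infconv v_1)\vee(w\infconv v_2)\le w\infconv(v_1\vee v_2)$. Closing this gap --- either by a density argument restricted to a class of functions where both sides agree, by recasting the polynomial expansion as a simultaneous two-variable decomposition, or by exploiting the $1$-epi-homogeneity to control the discrepancy under $\oZ$ --- is where the technical core of the argument lies.
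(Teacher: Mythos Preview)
The paper does not supply its own proof of this corollary; it is quoted verbatim from \cite{Colesanti-Ludwig-Mussnig-4}, so there is nothing in the present paper to compare against. That said, your outline is exactly the McMullen-type argument one expects in the cited source: polynomiality of $(s,t)\mapsto\oZ(s\sq u_1\infconv t\sq u_2)$ together with joint $1$-homogeneity forces linearity, and the boundary values identify the coefficients.

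Your ``principal obstacle'' is not actually an obstacle. When $v_1\wedge v_2\in\fconvs$, equality holds in
\[
w\infconv(v_1\vee v_2)=(w\infconv v_1)\vee(w\infconv v_2),
\]
not merely the one-sided inequality you wrote. Passing to epi-graphs, the claim becomes
\[
(\epi v_1\cap\epi v_2)+\epi w=(\epi v_1+\epi w)\cap(\epi v_2+\epi w),
\]
which is the classical identity $(A\cap B)+C=(A+C)\cap(B+C)$ for closed convex sets $A,B,C$ with $A\cup B$ convex. (Given $x=a+c_1=b+c_2$, the segment $t\mapsto x-((1-t)c_1+tc_2)$ lies in $A\cup B$, starts in $A$, ends in $B$, and by closedness hits $A\cap B$ at some $t^\ast$; then $x\in(A\cap B)+C$.) Hence $v\mapsto\oZ(w\infconv v)$ \emph{is} a continuous, epi-translation invariant valuation, Theorem~\ref{thm:mcmullen_cvx_functions} applies directly, and your plan goes through without any density or discrepancy argument.
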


We require the following classification results for valuations which are epi-homogeneous of degree $0$ and $n$.

\begin{theorem}[\!\cite{Colesanti-Ludwig-Mussnig-4}, Theorem 25]
\label{thm:class_0-hom}
A functional $\oZ\colon\fconvs \to \R$ is a continuous,  epi-translation invariant valuation that is epi-homogeneous of degree $0$ if and only if $\,\oZ$ is constant.
\end{theorem}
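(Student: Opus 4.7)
The sufficiency is immediate: a constant functional is trivially a continuous, epi-translation invariant valuation, and $\oZ(\lambda\sq u)=\oZ(u)=\lambda^{0}\oZ(u)$ for every $\lambda>0$ and $u\in\fconvs$.

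For the necessity, my plan is to exhibit a single distinguished function in $\fconvs$ to which $\oZ$ assigns the same value as it does to every other $u\in\fconvs$. The natural candidate is the convex indicator $\ind_{\{0\}}\in\fconvs$ of the origin, defined by $\ind_{\{0\}}(0)=0$ and $\ind_{\{0\}}(x)=+\infty$ for $x\neq 0$. Given $u\in\fconvs$, pick any $x_0\in\dom u$ and set $\tilde u(x):=u(x+x_0)-u(x_0)$. Then $\tilde u\in\fconvs$ with $\tilde u(0)=0$, and epi-translation invariance gives $\oZ(u)=\oZ(\tilde u)$. The key claim is that $\lambda\sq\tilde u$ epi-converges to $\ind_{\{0\}}$ as $\lambda\to 0^+$.

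Super-coercivity of $\tilde u$ is the decisive ingredient for this epi-convergence. For arbitrary sequences $\lambda_k\to 0^+$ and $x_k\to x$, the liminf inequality is verified case by case. At $x=0$, super-coercivity together with lower semicontinuity force a global lower bound $\tilde u\ge -C$, so that $\lambda_k\tilde u(x_k/\lambda_k)\ge -C\lambda_k\to 0=\ind_{\{0\}}(0)$. At $x\ne 0$, super-coercivity gives, for any $M>0$, the estimate $\tilde u(y)\ge M|y|$ for $|y|$ sufficiently large; since $|x_k/\lambda_k|\to\infty$, we obtain $\lambda_k\tilde u(x_k/\lambda_k)\ge M|x_k|\to M|x|$ for $k$ large, hence $\liminf_k(\lambda_k\sq\tilde u)(x_k)=+\infty=\ind_{\{0\}}(x)$. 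For the recovery sequences required by the second condition of epi-convergence, the constant sequences $x_k\equiv x$ work in both cases by the same computations.

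Once the epi-convergence is established, the conclusion is extracted in one stroke: the epi-homogeneity of degree $0$ yields $\oZ(\lambda\sq\tilde u)=\oZ(\tilde u)$ for every $\lambda>0$, and continuity of $\oZ$ then delivers
$$\oZ(u)=\oZ(\tilde u)=\lim_{\lambda\to 0^+}\oZ(\lambda\sq\tilde u)=\oZ(\ind_{\{0\}}),$$
independently of $u$, so that $\oZ$ is constant with value $\oZ(\ind_{\{0\}})$. The main obstacle is the liminf inequality at $x=0$: for a general lower semicontinuous convex function the quantities $\lambda_k\tilde u(x_k/\lambda_k)$ need not remain bounded below as $\lambda_k\to 0^+$ and $x_k\to 0$, and it is precisely the hypothesis $u\in\fconvs$, through super-coercivity, that supplies the uniform lower bound making the argument go through.
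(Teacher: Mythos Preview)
Your argument is correct. The epi-convergence $\lambda\sq\tilde u\to\ind_{\{0\}}$ as $\lambda\to 0^+$ is verified cleanly: the global lower bound $\tilde u\ge -C$ (a consequence of super-coercivity, which makes sublevel sets compact) handles the liminf at $x=0$, and super-coercivity directly forces the liminf to be $+\infty$ at $x\ne 0$. The recovery sequences are as you say. Combining epi-homogeneity of degree $0$ with continuity then pins every value to $\oZ(\ind_{\{0\}})$.

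Note that the present paper does not supply its own proof of this statement; it is quoted from \cite{Colesanti-Ludwig-Mussnig-4} (Theorem~25), so there is no in-paper argument to compare against. Your approach is in fact the natural one and matches the standard proof in spirit. One small observation worth making explicit: your argument never invokes the valuation identity, so you have actually proved the slightly stronger fact that any continuous, epi-translation invariant, epi-homogeneous of degree~$0$ \emph{functional} on $\fconvs$ is constant.
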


\begin{theorem}[\!\cite{Colesanti-Ludwig-Mussnig-4}, Theorem 2]
\label{thm:class_n-hom}
A functional $\oZ\colon\fconvs\to\R$ is a continuous, epi-translation invariant valuation that is epi-homogeneous of degree $n$ if and only if there exists $\zeta\in C_c(\R^n)$ such that
$$\oZ(u)=\int_{\dom u} \zeta(\nabla u(x))\d x$$
for every $u\in\fconvs$.
\end{theorem}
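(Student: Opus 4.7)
For the ``if'' direction, Lemma~\ref{gradient integral} already provides the valuation property, continuity, and epi-translation invariance of the functional $u\mapsto\int_{\dom u}\zeta(\nabla u)\,\d x$. Epi-homogeneity of degree $n$ follows from $\dom(\lambda\sq u)=\lambda\,\dom u$ and $\nabla(\lambda\sq u)(x)=\nabla u(x/\lambda)$ via the substitution $y=x/\lambda$, yielding the factor $\lambda^n$.

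For the converse, my plan is to extract a candidate density $\zeta$ by probing $\oZ$ against a test family whose gradient is constant on its domain, and then invoke the classical classification of valuations on convex bodies. For $y_0\in\R^n$ and $K\in\cK^n$, set
$$u_{K, y_0}(x)\deq\ind_K(x)+\langle y_0, x\rangle,$$
where $\ind_K=0$ on $K$ and $+\infty$ off $K$. Then $u_{K, y_0}\in\fconvs$ with $\nabla u_{K, y_0}\equiv y_0$ on $\interno K$, so one expects $\oZ(u_{K, y_0})$ to equal $\zeta(y_0)\,V_n(K)$. Define $\mu_{y_0}\colon\cK^n\to\R$ by $\mu_{y_0}(K)\deq\oZ(u_{K, y_0})$. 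Using the identities $u_{K, y_0}\vee u_{L, y_0}=u_{K\cap L, y_0}$, $u_{K, y_0}\wedge u_{L, y_0}=u_{K\cup L, y_0}$ (for $K\cup L$ convex), $\lambda\sq u_{K, y_0}=u_{\lambda K, y_0}$, and $u_{K+x_0, y_0}=u_{K, y_0}(\cdot-x_0)+\langle y_0, x_0\rangle$, together with the hypotheses on $\oZ$, one verifies that $\mu_{y_0}$ is a continuous, translation invariant, $n$-homogeneous valuation on $\cK^n$. By the classical result that such valuations on $\cK^n$ are multiples of $V_n$ (a consequence of McMullen's homogeneous decomposition), there exists $\zeta(y_0)\in\R$ with $\mu_{y_0}(K)=\zeta(y_0)\,V_n(K)$. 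Continuity of $\zeta\colon\R^n\to\R$ follows by fixing $K=B^n$ and using that $y_0\mapsto\ind_{B^n}+\langle y_0,\cdot\rangle$ is continuous from $\R^n$ into $\fconvs$.

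The hardest step will be to upgrade $\zeta\in C(\R^n)$ to $\zeta\in C_c(\R^n)$. The expected representation $\oZ(u)=\int_{\R^n}\zeta(y)\,\d\Psi_n^n(u, y)$ requires compact support of $\zeta$ because $u\mapsto\Psi_n^n(u,\cdot)$ is continuous along epi-convergent sequences only weakly on $C_c(\R^n)$; a non-compactly supported $\zeta$ would obstruct continuity of $\oZ$. My plan is to construct an explicit epi-convergent sequence $u_k\to u$ in $\fconvs$ along which $\Psi_n^n(u_k,\cdot)$ loses mass at infinity, and to argue that continuity of $\oZ$ along this sequence forces $\zeta$ to vanish outside a compact set.

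Once $\zeta\in C_c(\R^n)$ is known, the difference
$$\oZ'(u)\deq\oZ(u)-\int_{\dom u}\zeta(\nabla u(x))\,\d x$$
is a continuous, epi-translation invariant, $n$-homogeneous valuation that vanishes on the whole family $\{u_{K, y_0}\}$. To conclude $\oZ'\equiv 0$, I would approximate a general $u\in\fconvs$ by piecewise affine convex functions with bounded polyhedral domain (dense in $\fconvs$ with respect to epi-convergence) and decompose them, via repeated application of the valuation identity to inclusion-exclusion combinations of polytope indicators, into expressions built from the test functions $u_{K, y_0}$. Continuity of $\oZ'$ then yields $\oZ=\oZ_\zeta$ on all of $\fconvs$.
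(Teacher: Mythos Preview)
This theorem is quoted from \cite{Colesanti-Ludwig-Mussnig-4} and is not proved in the present paper, so there is no proof here to compare against. Your outline captures the right strategy, but two steps are left as genuine gaps. For compact support of $\zeta$ you only announce an intention; a workable argument is to take $y_k$ with $|y_k|\to\infty$ and $\zeta(y_k)\neq 0$, rescale $u_{\Bn,y_k}$ epi-homogeneously so that $\oZ$ takes a fixed nonzero value, and verify that the rescaled functions epi-converge to $\ind_{\{0\}}$, on which $\oZ$ vanishes by $n$-homogeneity --- a contradiction. The epi-convergence is not automatic and must be checked.

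The more substantive gap is the final reduction. As stated, your plan to decompose a piecewise-affine $u$ into test functions $u_{K,y_0}=\ell_{y_0}+\ind_K$ via the valuation identity does not work: for distinct $y_0,y_1$ the pointwise minimum $(\ell_{y_0}+\ind_Q)\wedge(\ell_{y_1}+\ind_Q)$ is not convex, so the identity cannot be applied to that pair. What does work is to cut the \emph{domain}: for a hyperplane $H$ bounding half-spaces $H^{\pm}$ one has $(u+\ind_{H^+})\wedge(u+\ind_{H^-})=u$ and $(u+\ind_{H^+})\vee(u+\ind_{H^-})=u+\ind_H$, and iterating along the fold hyperplanes of $u$ reduces $\oZ'(u)$ to values on affine pieces plus terms whose domain is lower-dimensional. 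To dispose of the latter you still need an argument --- for instance, the restriction of $\oZ'$ to functions with domain in a fixed hyperplane $E$ is a continuous, epi-translation invariant valuation on $\fconvsE$ that is epi-homogeneous of degree $n>\dim E=n-1$, hence identically zero by Theorem~\ref{thm:mcmullen_cvx_functions} applied in dimension $n-1$. Without naming this mechanism, ``inclusion--exclusion combinations of polytope indicators'' does not close the proof.
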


\goodbreak
As a simple consequence, we  obtain the following result (c.f. \cite[Corollary 2.5]{Colesanti-Ludwig-Mussnig-5}).

\begin{corollary}
\label{cor:class_n-hom-rot}
For $n\ge 2$, a functional $\oZ\colon\fconvs \to \R$ is a continuous, epi-translation and rotation invariant valuation that is epi-homogeneous of degree $n$ if and only if there exists $\zeta\in C_c([0,\infty))$ such that
$$\oZ(u)=\int_{\dom u} \zeta(|\nabla u(x)|) \d x$$
for every $u\in\fconvs$. For $n=1$, the same representation holds if we replace rotation invariance by reflection invariance.
\end{corollary}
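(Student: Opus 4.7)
The plan is to deduce the corollary from Theorem \ref{thm:class_n-hom} by imposing rotation (respectively reflection) invariance on the density $\zeta$ appearing there. Sufficiency is routine: for $\zeta\in C_c([0,\infty))$, the function $y\mapsto\zeta(|y|)$ lies in $C_c(\R^n)$, so Lemma \ref{gradient integral} yields continuity, epi-translation invariance, and the valuation property; the change of variables $y=\vartheta^{-1}x$ combined with $\nabla(u\circ\vartheta^{-1})(x)=\vartheta\,\nabla u(\vartheta^{-1}x)$ delivers rotation invariance, and the scaling $\lambda\sq u(x)=\lambda u(x/\lambda)$ gives epi-homogeneity of degree $n$.

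For necessity, I would first apply Theorem \ref{thm:class_n-hom} to obtain $\zeta\in C_c(\R^n)$ with $\oZ(u)=\int_{\dom u}\zeta(\nabla u(x))\d x$ for every $u\in\fconvs$, so the entire task reduces to showing that $\zeta$ is radial. Using \eqref{n-gradient} together with \eqref{eq:int_u_psi_int_v_phi}, the valuation can be rewritten as
$$\oZ(u)=\int_{\R^n}\zeta(y)\d\Phi_n^n(u^*,y),$$
which converts the problem of reading off pointwise values of $\zeta$ into the problem of exhibiting convex functions whose Monge--Amp\`ere measure is a Dirac mass. This is exactly what Lemma \ref{le:delta} furnishes: for any $\bar x=(\bar x_1,\ldots,\bar x_n)$ with $\bar x_i\neq 0$ and $\bar v\in\fconvf$ the function constructed there, the choice $u\deq\bar v^*\in\fconvs$ satisfies $\oZ(u)=\zeta(\bar x)$.

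To exploit rotation invariance, I would next compute $\oZ(u\circ\vartheta^{-1})$ for $\vartheta\in\SO(n)$. The elementary identity $(u\circ\vartheta^{-1})^*=u^*\circ\vartheta^{-1}$ and the rotation covariance $\Phi_n^n(v\circ\vartheta^{-1},B)=\Phi_n^n(v,\vartheta^{-1}B)$ of the Monge--Amp\`ere measure together yield $\Phi_n^n((u\circ\vartheta^{-1})^*,\cdot)=\delta_{\vartheta\bar x}$, hence $\oZ(u\circ\vartheta^{-1})=\zeta(\vartheta\bar x)$. The rotation invariance of $\oZ$ then forces $\zeta(\vartheta\bar x)=\zeta(\bar x)$ on the dense open set of $\bar x\in\R^n$ with non-vanishing coordinates, and continuity of $\zeta$ extends this to all of $\R^n$. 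It then follows that $\zeta(y)=\tilde\zeta(|y|)$ for some $\tilde\zeta\in C_c([0,\infty))$. For $n=1$ the group $\SO(1)$ is trivial and the argument is carried out with the reflection $x\mapsto -x$ in place of $\vartheta$, yielding that $\zeta$ is even and hence of the form $\tilde\zeta(|\cdot|)$.

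The main step requiring care is the transport of the Dirac construction under rotations, but once the identity $(u\circ\vartheta^{-1})^*=u^*\circ\vartheta^{-1}$ and the covariance of $\Phi_n^n$ are in hand, everything reduces to evaluating $\zeta$ pointwise; no delicate analysis is needed beyond the (already quoted) structure theorem and the Dirac lemma.
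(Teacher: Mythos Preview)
Your proof is correct and follows the route the paper has in mind: apply Theorem~\ref{thm:class_n-hom} to obtain a density $\zeta\in C_c(\R^n)$ and then argue that rotation (respectively reflection) invariance forces $\zeta$ to be radial (respectively even). The paper does not spell out the argument, calling the corollary a simple consequence of Theorem~\ref{thm:class_n-hom}.

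Your detour through \eqref{n-gradient}, \eqref{eq:int_u_psi_int_v_phi} and Lemma~\ref{le:delta} is valid but heavier than needed. Since the Legendre transform of the function $\bar v$ in Lemma~\ref{le:delta} is exactly $\ell_{\bar x}+\ind_{[-1/2,1/2]^n}$, you are in effect testing on functions $\ell_y+\ind_K$; and for those one can read off $\oZ(\ell_y+\ind_K)=V_n(K)\,\zeta(y)$ directly from the representation in Theorem~\ref{thm:class_n-hom}, without passing to the dual picture. Combining this with $(\ell_y+\ind_K)\circ\vartheta^{-1}=\ell_{\vartheta y}+\ind_{\vartheta K}$ and $V_n(\vartheta K)=V_n(K)$ gives $\zeta(\vartheta y)=\zeta(y)$ for all $y\in\R^n$ in one line, avoiding the restriction to points with non-vanishing coordinates and the subsequent density argument. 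This is also the style of test function used later in the paper (cf.\ the proof of Theorem~\ref{Klain-Schneider}).
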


\noindent
Here, we say that $\oZ$ is \emph{reflection invariant} if $\oZ(u)=\oZ(u^-)$, where $u^-(x)\deq u(-x)$ for $x\in\R$.

\goodbreak

By Theorem~\ref{thm:mcmullen_cvx_functions}, Theorem~\ref{thm:class_0-hom} and Corollary~\ref{cor:class_n-hom-rot} as well as the definition of the measures $\Psi_j^n$, we have the following characterization result for the $1$-dimensional case. See also \cite[Corollary 3]{Colesanti-Ludwig-Mussnig-4}.

\begin{proposition}
\label{prop:1}
A functional $\,\oZ:\fconvse \to \R$ is a continuous, epi-translation and reflection invariant valuation if and only if there exist $\zeta_0\in\Had{0}{1}$ and $\zeta_1\in \Had{1}{1}$ such that
$$\oZ(u)=\int_{\R} \zeta_0(\vert y\vert) \d \Psi^1_0(u, y)+\int_{\R} \zeta_1(\vert y\vert) \d \Psi^1_1(u, y)$$
for every $u\in\fconvse$. 
\end{proposition}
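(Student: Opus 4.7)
The plan is to invoke the McMullen-type decomposition from Theorem~\ref{thm:mcmullen_cvx_functions} to split $\oZ$ into homogeneous components of degrees $0$ and $1$, and then apply the classification results already listed to each piece.

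For sufficiency, both summands are continuous, epi-translation and reflection invariant valuations. The $j=1$ term equals $\int_{\dom u}\zeta_1(|\nabla u(x)|)\,d x$ by \eqref{n-gradient}, and is a valuation of the desired type by Lemma~\ref{gradient integral} (or Lemma~\ref{fine zeta}). For the $j=0$ term, I would first evaluate it on $u\in\fconvse\cap C_+^2(\R)$, where by \eqref{eq:def_psi_j_n} together with the change of variables $y=u'(x)$ (and super-coercivity, which forces $u'$ to be a bijection $\R\to\R$),
\[
\int_\R \zeta_0(|y|)\,d\Psi_0^1(u,y)=\int_\R \zeta_0(|u'(x)|)\,u''(x)\,d x = 2\int_0^\infty \zeta_0(s)\,d s.
\]
The defining conditions of $\Had{0}{1}$ ensure the right-hand side is finite; by continuity of the functional $u\mapsto\int_\R\zeta_0(|y|)\,d\Psi_0^1(u,y)$ on $\fconvse$ and density of $\fconvse\cap C_+^2(\R)$, the $j=0$ term equals this constant on all of $\fconvse$, so it is trivially continuous, epi-translation and reflection invariant.

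For necessity, Theorem~\ref{thm:mcmullen_cvx_functions} with $n=1$ gives a decomposition $\oZ=\oZ_0+\oZ_1$ into continuous, epi-translation invariant valuations of epi-degrees $0$ and $1$. Since reflection commutes with epi-multiplication (indeed $(\lambda\sq u)^- = \lambda\sq u^-$), each $\oZ_j$ inherits reflection invariance. Theorem~\ref{thm:class_0-hom} forces $\oZ_0$ to be a constant $c\in\R$, which can be represented as $\int_\R\zeta_0(|y|)\,d\Psi_0^1(u,y)$ by choosing any $\zeta_0\in C_c((0,\infty))\subset\Had{0}{1}$ with $2\int_0^\infty\zeta_0(s)\,d s=c$, using the sufficiency computation above. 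Corollary~\ref{cor:class_n-hom-rot} in its reflection-invariant $n=1$ version yields $\zeta_1\in C_c([0,\infty))\subset \Had{1}{1}$ with $\oZ_1(u)=\int_{\dom u}\zeta_1(|\nabla u(x)|)\,d x$, and \eqref{n-gradient} rewrites this as $\int_\R\zeta_1(|y|)\,d\Psi_1^1(u,y)$. Summing the two pieces gives the claimed formula.

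The step I expect to require the most care is the degree-$0$ computation, because $\Psi_0^1(u,\R)$ is typically infinite under super-coercivity, and because $\zeta_0\in\Had{0}{1}$ need not extend continuously to $s=0$. The condition $\zeta_0\in\Had{0}{1}$ is exactly tailored so that the improper integral $2\int_0^\infty\zeta_0(s)\,d s$ exists and so that the associated integral against $\Psi_0^1(u,\cdot)$ is finite and continuous in $u$; the remaining matching of constants and application of \eqref{n-gradient} are then direct.
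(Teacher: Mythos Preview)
Your proof is correct and follows exactly the route the paper sketches: the paper simply says the proposition follows from Theorem~\ref{thm:mcmullen_cvx_functions}, Theorem~\ref{thm:class_0-hom}, Corollary~\ref{cor:class_n-hom-rot}, and the definition of the measures $\Psi_j^n$, and you have filled in precisely those details. One remark that simplifies your degree-$0$ discussion: by \eqref{eq:int_u_psi_int_v_phi} and \eqref{eq:hess_def} with $j=0$ (where $[\Hess v]_0\equiv 1$), the measure $\Psi_0^1(u,\cdot)=\Phi_0^1(u^*,\cdot)$ is Lebesgue measure on $\R$ for \emph{every} $u\in\fconvse$, so $\int_\R \zeta_0(|y|)\,d\Psi_0^1(u,y)=2\int_0^\infty\zeta_0(s)\,ds$ directly, without needing the $C_+^2$ computation or any density/continuity argument.
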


An important property of continuous, epi-translation invariant valuations on convex functions is that they are determined by their values on small sets of functions (cf.~\cite{Ludwig:SobVal, Colesanti-Ludwig-Mussnig-1}). We require the following version of this property that easily follows from \cite[Lemma 5.1]{Mussnig19}. 
For a convex body $K\in\cK^n$, we denote by $\ind_K\in\fconvs$ its (convex) indicator function, which is defined by
$$\ind_K(x):=\begin{cases}
0\quad &\text{for } x\in K,\\
+\infty\quad &\text{for } x\not\in K.\end{cases}$$
We denote by $\cP_{(0)}^n$ the set of polytopes in $\R^n$ that contain the origin in their interiors. Here, a polytope is the convex hull of finitely many points in $\R^n$.

\begin{lemma}
\label{le:reduction}
Let $\oZ_1,\oZ_2\colon\fconvs\to\R$ be continuous, epi-translation invariant valuations. If $\oZ_1(h_P+\ind_Q)=\oZ_2(h_P+\ind_Q)$ for every $P,Q\in\cP_{(0)}^n$, then $\oZ_1\equiv\oZ_2$ on $\fconvs$.
\end{lemma}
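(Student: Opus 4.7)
The plan is to reduce the statement to a determination result already available in the literature. Set $\oZ \deq \oZ_1 - \oZ_2$. By linearity, $\oZ \colon \fconvs \to \R$ is again a continuous, epi-translation invariant valuation, and the hypothesis reads $\oZ(h_P + \ind_Q) = 0$ for every $P, Q \in \cP_{(0)}^n$. It therefore suffices to show that a continuous, epi-translation invariant valuation on $\fconvs$ which vanishes on the family $\{h_P + \ind_Q \colon P,Q \in \cP_{(0)}^n\}$ vanishes identically; this is precisely the content of \cite[Lemma~5.1]{Mussnig19}, which I would invoke directly.

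Were one to reprove the determination result from scratch, the argument would proceed in three steps. First, one epi-approximates an arbitrary $u \in \fconvs$ by piecewise affine convex functions with polytopal domain; this can be achieved via Legendre duality, since super-coercivity of $u$ forces $u^* \in \fconvf$, which is uniformly approximable on compact sets by finite maxima of affine forms (supporting hyperplanes to $\epi u^*$), and the Legendre transform then carries such approximations back to piecewise affine functions with polytopal domain. Second, on such an approximant, iterated application of the valuation identity over a polyhedral subdivision of the domain yields, via inclusion-exclusion, an expression for $\oZ$ as a $\mathbb{Z}$-linear combination of values on ``single-cell'' functions of the form $\ell + \ind_R$, with $\ell$ affine and $R$ a polytope. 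Third, epi-translation invariance removes the constant part of $\ell$, and a suitable perturbation plus continuity of $\oZ$ allows one to replace the remaining linear form by a support function $h_P$ with $P \in \cP_{(0)}^n$, also adjusting $R$ so that it lies in $\cP_{(0)}^n$.

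The main obstacle is the third step: ensuring that at every stage of the inclusion-exclusion the test functions can be brought into the required form $h_P + \ind_Q$ with both $P$ and $Q$ in $\cP_{(0)}^n$. This combinatorial bookkeeping, together with the continuity argument that transfers the vanishing on the dense family to all of $\fconvs$, is exactly what \cite[Lemma~5.1]{Mussnig19} takes care of, which is why the present lemma follows from it by the linearity reduction at the outset.
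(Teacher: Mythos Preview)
Your approach is correct and coincides with the paper's: the paper does not give a proof but simply states that the lemma ``easily follows from \cite[Lemma~5.1]{Mussnig19}'', which is exactly your linearity reduction followed by a direct appeal to that result. Your additional sketch of how one would reprove the determination result is extra but not needed here.
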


\subsection{Valuations on $\fconvf$}
For $X\subseteq\fconvx$, we associate with a valuation $\oZ\colon X\to\R$ its \emph{dual valuation} $\oZ^*$ defined on $X^*\deq\{u^*: u\in X\}$ by 
$$\oZ^*(u):=\oZ(u^*).$$
It was shown in \cite{Colesanti-Ludwig-Mussnig-3} that $\oZ\colon X\to\R$ is a continuous valuation if and only if
$\oZ^*\colon X^*\to\R$ is a continuous valuation. Since $u\in\fconvs$ if and only if $u^*\in\fconvf$, this allows us to transfer results between $\fconvs$ and $\fconvf$. We call a valuation $\oZ\colon \fconvf\to\R$ \emph{dually epi-translation invariant} if $\oZ^*$ is epi-translation invariant or equivalently
$$\oZ(v+\ell+\alpha)=\oZ(v)$$
for every $v\in\fconvf$, linear functional $\ell\colon\R^n\to\R$ and $\alpha\in\R$. We say that $\oZ$ is \emph{homogeneous} of degree $j$ if  $\oZ^*$ is epi-homogeneous of degree $j$ or equivalently
$$\oZ(\lambda \, v) = \lambda^j  \oZ(v)$$
for every $\lambda>0$ and $v\in\fconvf$.

The authors \cite{Colesanti-Ludwig-Mussnig-5} established the Hadwiger theorem also for valuations on $\fconvf$ by using duality with valuations on $\fconvs$. For $0\le j\le n$ and $\zeta\in\Had{j}{n}$, define $\oZZ{j}{\zeta}^{n,*}$ as the valuation dual to $\oZZ{j}{\zeta}^{n}$, that is, $\oZZ{j}{\zeta}^{n,*}(v):=\oZZ{j}{\zeta}^{n}(v^*)$ for $v\in\fconvf$.

\begin{theorem}[\!\cite{Colesanti-Ludwig-Mussnig-5}, Theorem 1.5]
\label{dthm:hadwiger_convex_functions}
A functional $\oZ\colon\fconvf \to \R$ is a continuous, dually epi-translation and rotation in\-va\-riant valuation if and only if there exist  functions $\zeta_0\in\Had{0}{n}$, \dots, $\zeta_n\in\Had{n}{n}$  such that
\begin{equation*}
\oZ(v)= \sum_{j=0}^n \,\oZZ{j}{\zeta_j}^{n,*}(v) 
\end{equation*}
for every $v\in\fconvf$.
\end{theorem}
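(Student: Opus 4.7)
The plan is to deduce Theorem~\ref{dthm:hadwiger_convex_functions} directly from Theorem~\ref{thm:hadwiger_convex_functions} by Legendre duality, using the correspondence between valuations on $\fconvs$ and on $\fconvf$ recalled in Section~2.6. The definitions of \emph{dually epi-translation invariant} and of the dual functional intrinsic volumes $\oZZ{j}{\zeta}^{n,*}$ are tailored precisely so that every hypothesis and every ingredient in the representation transfers under $v\mapsto v^*$.

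For the ``only if'' direction, let $\oZ\colon\fconvf\to\R$ be a continuous, dually epi-translation and rotation invariant valuation and form its dual $\oZ^*\colon\fconvs\to\R$, $\oZ^*(u)\deq \oZ(u^*)$. Since $u\mapsto u^*$ is a bijection between $\fconvs$ and $\fconvf$ that is a homeomorphism with respect to epi-convergence, the result of \cite{Colesanti-Ludwig-Mussnig-3} cited in Section~2.6 guarantees that $\oZ^*$ is a continuous valuation on $\fconvs$. It is epi-translation invariant by the very definition of dual epi-translation invariance. Rotation invariance transfers because $(u\circ\vartheta^{-1})^*=u^*\circ\vartheta^{-1}$ for every $\vartheta\in\SO(n)$, so
$$\oZ^*(u\circ\vartheta^{-1})=\oZ\bigl((u\circ\vartheta^{-1})^*\bigr)=\oZ(u^*\circ\vartheta^{-1})=\oZ(u^*)=\oZ^*(u).$$
Theorem~\ref{thm:hadwiger_convex_functions} applied to $\oZ^*$ produces $\zeta_0\in\Had{0}{n},\dots,\zeta_n\in\Had{n}{n}$ with $\oZ^*(u)=\sum_{j=0}^n \oZZ{j}{\zeta_j}^n(u)$ on $\fconvs$. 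Since every $v\in\fconvf$ satisfies $v^{**}=v$ and $v^*\in\fconvs$, setting $u=v^*$ yields
$$\oZ(v)=\oZ(v^{**})=\oZ^*(v^*)=\sum_{j=0}^n \oZZ{j}{\zeta_j}^n(v^*)=\sum_{j=0}^n \oZZ{j}{\zeta_j}^{n,*}(v),$$
by the definition of $\oZZ{j}{\zeta_j}^{n,*}$.

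The ``if'' direction runs the same argument in reverse. Given $\zeta_j\in\Had{j}{n}$ for $0\le j\le n$, set $\oZ(v)\deq\sum_{j=0}^n \oZZ{j}{\zeta_j}^{n,*}(v)$ for $v\in\fconvf$. Then by construction $\oZ^*=\sum_{j=0}^n \oZZ{j}{\zeta_j}^n$ on $\fconvs$, which is a continuous, epi-translation and rotation invariant valuation (each summand is a functional intrinsic volume). Applying the duality correspondence in the opposite direction together with the observation $(u\circ\vartheta^{-1})^*=u^*\circ\vartheta^{-1}$ shows that $\oZ$ is a continuous, dually epi-translation and rotation invariant valuation on $\fconvf$.

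No substantial obstacle is expected: the proof is essentially a bookkeeping exercise, and the only step warranting explicit verification is the transfer of rotation invariance under the Legendre transform, which follows immediately from the fact that rotations are isometries of the Euclidean inner product. Everything else is either definitional or packaged into the duality results of \cite{Colesanti-Ludwig-Mussnig-3} cited in Section~2.6. Thus the theorem stands as a corollary of Theorem~\ref{thm:hadwiger_convex_functions} once one verifies that continuity, valuation, epi-translation invariance, and rotation invariance each have dual counterparts.
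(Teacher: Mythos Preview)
Your proposal is correct and matches the approach the paper indicates: the paper does not give a separate proof of Theorem~\ref{dthm:hadwiger_convex_functions} but cites it from \cite{Colesanti-Ludwig-Mussnig-5} and explicitly remarks that it was established ``by using duality with valuations on $\fconvs$,'' which is precisely the Legendre-duality bookkeeping you carry out. The verification that rotation invariance transfers via $(u\circ\vartheta^{-1})^*=u^*\circ\vartheta^{-1}$ is the only nontrivial check, and it is handled correctly.
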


\section{Simple and Dually Simple Valuations}
\label{se:simple_vals}
We define analogs of zonoids and generalized zonoids for convex functions and show that generalized functional zonoids are dense in $\fconvs$. This result is used to classify simple, continuous, and epi-additive functionals, and this classification result is used in the proof of Theorem \ref{Klain-Schneider}. 

A convex body $C\in\cK^{n+1}$ is called \emph{centrally symmetric} if some translate of $C$ is origin-symmetric. 
Note that centrally symmetric convex bodies with support function of class $C^\infty$ are dense in the set of all centrally symmetric convex bodies (see \cite[Theorem~3.4.1]{Schneider:CB2}).
A \emph{zonotope} is the Minkowski sum of finitely many line segments, and a convex body is a \emph{zonoid} if it is the limit of a sequence of zonotopes. 
A convex body $C$ is a \emph{generalized zonoid} if there exist zonoids $W,Z\in\cK^n$ such that
$$C+W=Z.$$
We require the following result (see, for example, \cite[Proposition 8.1.2]{Klain:Rota}).
\begin{lemma}
\label{generalized_zonoids}
If $C\in\cK^{n+1}$ is centrally symmetric with support function $h_C\in C^\infty(\S^n)$, then $C$ is a generalized zonoid.
\end{lemma}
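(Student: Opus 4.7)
The plan is to exploit the cosine transform characterization of zonoids: a centrally symmetric body $Z\in\cK^{n+1}$ is a zonoid precisely when, after a suitable translation,
\begin{equation*}
h_Z(u)=\int_{\S^n}\vert\langle u,v\rangle\vert\,\d\mu(v)\quad\text{for all }u\in\S^n,
\end{equation*}
for some even non-negative Borel measure $\mu$ on $\S^n$. Accordingly, $C$ is a generalized zonoid exactly when such a representation is available for a \emph{signed} even measure $\mu$: splitting $\mu=\mu^+-\mu^-$ into its Jordan parts, the zonoids $W,Z\in\cK^{n+1}$ associated to $\mu^-$ and $\mu^+$ satisfy $h_C+h_W=h_Z$, hence $C+W=Z$.

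First I would reduce to the origin-symmetric case. Since translating $C$ amounts to adding the support function of a point, and this change is irrelevant for being a generalized zonoid, I may assume $C=-C$, so that $h_C\in C^\infty(\S^n)$ is even.

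The heart of the argument is to produce an even density $f\in C^\infty(\S^n)$ such that
\begin{equation*}
h_C(u)=\int_{\S^n}\vert\langle u,v\rangle\vert f(v)\,\d\hm^n(v),
\end{equation*}
where $\hm^n$ denotes spherical Lebesgue measure. I would proceed via spherical harmonics: the cosine transform $Tf(u)\deq\int_{\S^n}\vert\langle u,v\rangle\vert f(v)\d\hm^n(v)$ acts by the Funk--Hecke formula as a multiplier on each harmonic component, annihilating the odd degrees and multiplying each even degree $2k$ by a non-zero Gegenbauer moment of $\vert t\vert$. Since $h_C$ is smooth and even, every one of its spherical harmonic coefficients lies in the range of $T$, and dividing by the known non-zero eigenvalues produces coefficients of rapid decay; standard asymptotics for these eigenvalues show the resulting series defines a $C^\infty$ function $f$, giving $T f=h_C$.

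With $f$ in hand, set $f^\pm\deq\max(\pm f,0)\in C(\S^n)$ and define the finite non-negative Borel measures $\d\mu^\pm\deq f^\pm\,\d\hm^n$ on $\S^n$. The functions $h_W,h_Z$ defined by $h_Z(u)\deq\int_{\S^n}\vert\langle u,v\rangle\vert\d\mu^+(v)$ and $h_W(u)\deq\int_{\S^n}\vert\langle u,v\rangle\vert\d\mu^-(v)$ are sublinear and are thus support functions of zonoids $W,Z$, while $h_C=h_Z-h_W$ by construction, yielding the desired Minkowski identity $C+W=Z$. The main obstacle is the middle step, i.e.\ the surjectivity of the cosine transform on even smooth functions on $\S^n$; once this classical input is granted, the rest is bookkeeping.
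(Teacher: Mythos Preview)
The paper does not actually prove this lemma; it is stated with a reference to \cite[Proposition 8.1.2]{Klain:Rota} and used as a black box. Your argument via the cosine transform---reducing to the origin-symmetric case, inverting the cosine transform on even $C^\infty(\S^n)$ functions through its spherical-harmonic multiplier structure, and then splitting the resulting signed density into positive and negative parts to produce the zonoids $W$ and $Z$---is exactly the classical proof one finds in the cited source (and in Schneider's book), and it is correct.
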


We introduce functional analogs of zonoids and generalized zonoids as follows. Let $\fconvsz$ be the class of functions $u\in\fconvs$ that are the limit of a sequence $u_k$ where each $u_k\in\fconvs$ is the epi-sum of finitely many functions in $\fconvs$ with one-dimensional domains. Let $\fconvsg$ be the class of functions $u\in\fconvs$ such that there exist
$w,z\in\fconvsz$ with
$$u\infconv w=z.$$
We prove that $\fconvsg$ is dense in $\fconvs$.

\begin{lemma}
\label{convex_approx}
Let $n\ge2$. For $u\in\fconvs$, there exists a sequence $u_k\in\fconvsg$ such that  $u_k$ epi-converges to $u$ as $k\to \infty$.
\end{lemma}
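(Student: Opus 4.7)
The plan is to reduce the statement, via the Legendre--Fenchel transform, to an approximation problem in $\fconvf$ to which the convex-body zonoid approximation result Lemma~\ref{generalized_zonoids} can be applied. A function $u\in\fconvs$ with $\dom u\subseteq \R\xi$ (for some unit vector $\xi\in\sn$) has conjugate of the form $u^*(y)=\phi(\langle y,\xi\rangle)$ for some convex $\phi\colon\R\to\R$. Since Legendre conjugation is a homeomorphism $\fconvs\to\fconvf$ that exchanges epi-sums with pointwise sums and epi-convergence with uniform convergence on compact sets, $\fconvsz$ and $\fconvsg$ correspond under this duality respectively to the uniform-on-compacta limits of finite sums of ridge convex functions $y\mapsto \phi(\langle y,\xi\rangle)$ and to differences of such limits. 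The lemma thus amounts to showing that every $v\in\fconvf$ is the uniform-on-compacta limit of a sequence of functions $v_k\in\fconvf$ each admitting a representation $v_k = z_k - w_k$ with $w_k, z_k$ ridge sums.

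The first step is to approximate the target $v = u^*$ by a smooth, strictly convex function with quadratic growth at infinity, via mollification followed by addition of $\varepsilon|\cdot|^2/2$; and further, by writing $v = \bar v + \ell$ with $\bar v(y)\deq\tfrac12(v(y)+v(-y))$ even and $\ell$ linear, to reduce to the even case (the linear term $\ell$ being already a degenerate, zero-curvature element of $(\fconvsz)^*$). The second step is to associate to $\bar v$ a centrally symmetric smooth convex body $C\in\cK^{n+1}$, either through truncated sublevel sets or via an auxiliary lifting, so that Lemma~\ref{generalized_zonoids} applies and yields a zonoid decomposition $C+W=Z$ with $W,Z\in\cK^{n+1}$. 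The key observation is that a centered segment $[-w,w]\subset\R^{n+1}$ has support function $|\langle w,(y,\eta_{n+1})\rangle|$, which restricted to the affine slice $\eta_{n+1}=-1$ becomes $|\langle w',y\rangle - w_{n+1}|$, i.e., a one-dimensional ridge convex function in $y$; summing over the segments in the zonotope approximations of $W$ and $Z$ then produces the required representation $\bar v + w = z$ with $w,z$ limits of ridge sums.

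The principal obstacle is the construction in the second step of a centrally symmetric convex body $C$ whose support function encodes $\bar v$ on the relevant slice: since support functions are sublinear while $\bar v\in\fconvf$ may grow superlinearly, the naive prescription $h_C(y,-1)\deq\bar v(y)$ is impossible. This is circumvented by introducing a truncation (for example replacing $\bar v$ by its infimal convolution with $\lambda|\cdot|$ for large $\lambda$, whose epigraph admits a bounded dual encoding), applying the zonoid decomposition to the truncated body, and then taking a diagonal limit as truncation, smoothing, and symmetrization parameters are refined; super-coercivity of $u=v^*$ ensures that this limit recovers the original $v$ in the epi-topology. The hypothesis $n\ge 2$ enters because the ridge directions must span a space of dimension at least two for the generalized zonoid construction to be non-trivial.
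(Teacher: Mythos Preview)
Your dual strategy---passing to $v=u^*\in\fconvf$, encoding $v$ as $h_C(\cdot,-1)$ for a centrally symmetric body $C\in\cK^{n+1}$, applying Lemma~\ref{generalized_zonoids}, and reading off ridge functions from the segments of the resulting zonotopes---is essentially the Legendre dual of the paper's argument, and most of the pieces you identify (truncation to force Lipschitz behaviour, the role of segments, the diagonal limit) are correct and correspond under duality to what the paper does with epigraphs and lower envelopes.

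There is, however, a genuine gap in your reduction to the even case. For a general convex $v\in\fconvf$ the odd part
\[
v(y)-\bar v(y)=\tfrac12\big(v(y)-v(-y)\big)
\]
is \emph{not} linear: already for $v(y)=e^y$ on $\R$ one gets $\sinh y$. So the decomposition $v=\bar v+\ell$ you invoke simply does not exist, and without it you have no centrally symmetric body to feed into Lemma~\ref{generalized_zonoids}. Your sketch gives no alternative mechanism for producing central symmetry.

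The paper avoids this by symmetrising not the function but the convex body in $\R^{n+1}$. It takes the truncated epigraph $K_k:=\epi(u+\ind_{k\Bn})\cap\{x_{n+1}\le\gamma_k\}$ (choosing $\gamma_k$ with $u\le\gamma_k$ on $k\Bn$, so that the top face of $K_k$ is the flat disc $k\Bn\times\{\gamma_k\}$) and sets
\[
C_k:=K_k\cup\big(-K_k+2\gamma_k e_{n+1}\big),
\]
gluing $K_k$ to its point reflection along that flat top face. This $C_k$ is centrally symmetric by construction, and---the key point---its \emph{lower envelope} coincides with that of $K_k$, since the reflected copy sits entirely above height $\gamma_k$. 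After smoothing $C_k$ to $C_{kl}$ and applying Lemma~\ref{generalized_zonoids} to get $C_{kl}+W_{kl}=Z_{kl}$, taking lower envelopes yields $u_{kl}\infconv w_{kl}=z_{kl}$, and the lower envelope of a zonotope is an epi-sum of functions with one-dimensional domains. In your dual language, the analogous move would be to prescribe $h_C(\cdot,-1)=v$ on the lower half-sphere and extend to the upper half-sphere by the evenness condition $h_C(y,1)=v(-y)$, rather than trying to force $v$ itself to be even.
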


\begin{proof}
Note that functions $u$ with $\dom u=\R^n$ are dense in $\fconvs$. So, fix  $u\in\fconvs$ with $\dom u=\R^n$ and consider 
$$u+ \ind_{k\Bn}$$
for $k\ge1$. Observe that this sequence is epi-convergent to $u$ as $k\to\infty$. Choose $\gamma_k>0$ so large that $u\le \gamma_k$ on $k\Bn$ and set 
$$K_k:=\epi(u+ \ind_{k\Bn}) \cap\{x\in\R^{n+1}\colon x_{n+1}\le \gamma_k\}.$$
Define
$$C_k:=K_k\cup (-K_k+2\gamma_ke_{n+1}).$$
and note that $C_k\in\cK^{n+1}$ is centrally symmetric. Let $C_{kl}\in\cK^{n+1}$ be a sequence of centrally symmetric convex bodies with $h_{C_{kl}}\in C^\infty(\S^n)$ converging to $C_k$ as $l\to\infty$. By Lemma \ref{generalized_zonoids}, for $k,l\ge 1$, there are zonoids $W_{kl},Z_{kl}\in\cK^{n+1}$ such that
\begin{equation}\label{zonoids}
C_{kl}+W_{kl}=Z_{kl}.
\end{equation}
Let $u_{kl}\in\fconvs$ be the lower envelope of $C_{kl}$, that is,
$$u_{kl}(x):=\sup\{t\in\R\colon (x,s)\not\in C_{kl} \text{ for } s\le t\}$$
for $x\in\R^n$. Note that $u_{kl}$ epi-converges to $u+\ind_{k\Bn}$ as $l\to \infty$. 
Let $w_{kl}, z_{kl}\in\fconvs$ be the lower envelopes of $W_{kl}$ and $Z_{kl}$, respectively. It follows from \eqref{zonoids} and the definition of epi-sum that
\begin{equation}\label{gz}
u_{kl}\infconv w_{kl} =z_{kl}.
\end{equation}
If $Z\in\cK^{n+1}$ is a zonotope, then the lower envelope of $Z$ is the epi-sum of finitely many convex functions with one-dimensional domains. Since $W_{kl}$ and $Z_{kl}$ are zonoids (and hence limits of zonotopes), we obtain that the functions $w_{kl}, z_{kl}\in\fconvsz$. It now follows from \eqref{gz} that $u_{kl}\in\fconvsg$. Hence there is a subsequence $l(k)$ such that $u_k\deq u_{k, l(k)}$ is a sequence in $\fconvsg$ that epi-converges to $u$.
\end{proof}

\goodbreak
We are now in the position to prove the following result.

\begin{proposition}
\label{prop:simple_epi-additive}
Let $\oZ\colon\fconvs \to \R$ be continuous and epi-additive. If $\,\oZ$ is simple and $n\ge 2$, then $\oZ$ vanishes identically.
\end{proposition}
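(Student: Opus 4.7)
The plan is to combine the density result from Lemma~\ref{convex_approx} with the hypotheses of epi-additivity and simplicity, reducing the problem down to functions with one-dimensional domains, on which $\oZ$ must vanish automatically because $n\ge 2$.

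First I would show that $\oZ$ vanishes on the class $\fconvsz$. By definition of $\fconvsz$, any $u\in\fconvsz$ is the epi-limit of a sequence $u_k\in\fconvs$, where each $u_k$ is a finite epi-sum
\begin{equation*}
u_k=u_k^{(1)}\infconv\cdots \infconv u_k^{(m_k)}
\end{equation*}
of functions $u_k^{(i)}\in\fconvs$ whose domains are one-dimensional. Since $n\ge 2$, each such domain is lower dimensional in $\R^n$, so the simplicity hypothesis yields $\oZ(u_k^{(i)})=0$ for every $i$. Iterating epi-additivity then gives $\oZ(u_k)=\sum_{i=1}^{m_k}\oZ(u_k^{(i)})=0$, and continuity of $\oZ$ forces $\oZ(u)=0$.

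Next, for an arbitrary $u\in\fconvsg$, the definition provides $w,z\in\fconvsz$ with $u\infconv w=z$. Applying epi-additivity gives $\oZ(u)+\oZ(w)=\oZ(z)$, and since we have just shown $\oZ(w)=\oZ(z)=0$, we conclude $\oZ(u)=0$. Hence $\oZ$ vanishes on all of $\fconvsg$.

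Finally, Lemma~\ref{convex_approx} states that $\fconvsg$ is dense in $\fconvs$ with respect to epi-convergence (this is where the hypothesis $n\ge 2$ is used a second time). Continuity of $\oZ$ then propagates the vanishing from $\fconvsg$ to all of $\fconvs$, completing the proof. The argument contains no real obstacle beyond invoking the density lemma; the only subtlety is keeping track of the fact that $n\ge 2$ is needed both to apply Lemma~\ref{convex_approx} and to guarantee that one-dimensional domains count as lower dimensional so that simplicity can be applied.
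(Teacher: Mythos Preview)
Your proof is correct and follows essentially the same approach as the paper: first show $\oZ$ vanishes on $\fconvsz$ via simplicity plus epi-additivity, then use the generalized-zonoid decomposition $u\infconv w=z$ together with Lemma~\ref{convex_approx} and continuity to extend the vanishing to all of $\fconvs$. The only cosmetic difference is that the paper folds your second step (vanishing on $\fconvsg$) directly into the density argument rather than treating it separately.
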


\begin{proof}
If $w\in\fconvs$ is the epi-sum of finitely many functions in $\fconvs$ with lower dimensional domain, then the simplicity of $\oZ$ implies that
\begin{equation*}
\oZ(w)=0.
\end{equation*}
Since $\oZ$ is continuous, this implies that $\oZ$ vanishes on $\fconvsz$.

Let $u\in\fconvs$. By Lemma \ref{convex_approx}, we can approximate $u$ by a sequence
$u_k\in\fconvsg$, that is, there are sequences of convex functions $w_k, z_k\in\fconvsz$ such that
\begin{equation}\label{inf_eq}
u_k\infconv w_k= z_k
\end{equation}
and such that $u_k$ epi-converges to $u$.
Since $\oZ$ vanishes on $\fconvsz$, we obtain from \eqref{inf_eq} and the epi-additivity of $\oZ$ that 
$$\oZ(u_k\infconv w_k)= \oZ(u_k)=0$$
for $k\ge 1$.
Using that $\oZ$ is continuous on $\fconvs$ and that $u_{k}\in\fconvs$, we conclude that $\oZ$ vanishes identically. 
\end{proof}

For $n\geq 2$, a function $u\in\fconvs$ is called an \emph{orthogonal cylinder function} if it can be written as $u=u_E\infconv u_F$ where $u_E,u_F\in\fconvs$ are such that $\dom u_E\subseteq E$ and $\dom u_F\subseteq F$, where $E$ and $F$ are orthogonal and complementary subspaces with $\dim E,\dim F\geq 1$.

\begin{proposition}[\!\cite{Colesanti-Ludwig-Mussnig-5}, Proposition 4.6]
\label{prop:onehom}
Let $\,\oZ:\fconvs\to\R$ be a continuous and epi-translation invariant valuation. If $\,\oZ$ vanishes on all orthogonal cylinder functions and $n\geq 2$, then $\oZ$ is epi-homogeneous of degree $1$.
\end{proposition}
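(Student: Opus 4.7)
The plan is to apply the homogeneous decomposition and eliminate every component of degree different from $1$. By Theorem~\ref{thm:mcmullen_cvx_functions}, write $\oZ=\oZ_0+\oZ_1+\cdots+\oZ_n$, where $\oZ_j$ is continuous, epi-translation invariant and epi-homogeneous of degree $j$; it will suffice to prove $\oZ_j\equiv 0$ for $j\in\{0,2,\dots,n\}$. The first observation is that each $\oZ_j$ still vanishes on orthogonal cylinder functions: if $u=u_E\infconv u_F$, then $\lambda\sq u=(\lambda\sq u_E)\infconv(\lambda\sq u_F)$ is again an orthogonal cylinder function with the same decomposition, so $0=\oZ(\lambda\sq u)=\sum_{j=0}^n\lambda^j\,\oZ_j(u)$ for every $\lambda>0$ and the polynomial identity forces $\oZ_j(u)=0$ for each $j$.

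The extremal degrees $0$ and $n$ will be handled using the classification results already available. For $j=0$, Theorem~\ref{thm:class_0-hom} identifies $\oZ_0$ as a constant, and since it vanishes on any orthogonal cylinder function (which exists because $n\ge 2$), this constant must be zero. For $j=n$, Theorem~\ref{thm:class_n-hom} yields $\zeta\in C_c(\R^n)$ with
$\oZ_n(u)=\int_{\dom u}\zeta(\nabla u)\,\d x=\int_{\R^n}\zeta\,\d\Psi_n^n(u,\cdot)$.
For $u=u_E\infconv u_F$ with $\dim E=k$ and $E\perp F$, the identity \eqref{eq:int_u_psi_int_v_phi} combined with Lemma~\ref{le:decompose_hessian_measure} (applied with $l=n$, which collapses the sum to the single term $i=k$) gives
\[
\Psi_n^n(u_E\infconv u_F,\cdot)=\Psi_k^k(u_E|_E,\cdot)\otimes\Psi_{n-k}^{n-k}(u_F|_F,\cdot).
\]
By Lemma~\ref{le:delta}, one can choose $u_E|_E$ and $u_F|_F$ as Legendre conjugates of sums of translated absolute-value functions so that these two measures are Dirac masses at arbitrary points $\bar x_E\in E$ and $\bar x_F\in F$ with all coordinates nonzero. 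Then $\oZ_n(u_E\infconv u_F)=\zeta(\bar x_E+\bar x_F)=0$ on a dense subset of $\R^n$, so $\zeta\equiv 0$ by continuity.

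The hard part will be the intermediate range $2\le j\le n-1$. The approach is a McMullen-type polarization: by the valuation property and $j$-homogeneity, for $u_1,\dots,u_m\in\fconvs$ and $\lambda_1,\dots,\lambda_m>0$, the quantity $\oZ_j(\lambda_1\sq u_1\infconv\cdots\infconv\lambda_m\sq u_m)$ is a polynomial in $(\lambda_1,\dots,\lambda_m)$ of total degree $j$ whose coefficients are multilinear mixed valuations, and the diagonal of this polarization recovers $\oZ_j(u)$. Applied to $\lambda\sq u_E\infconv\mu\sq u_F$ for an orthogonal decomposition $E\oplus F=\R^n$, the hypothesis forces the polynomial to vanish identically, so every mixed coefficient vanishes; in particular the coefficient of $\lambda^j$ gives $\oZ_j(u_E)=0$ whenever $\dom u_E$ lies in a proper subspace, so $\oZ_j$ is simple. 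Combining simplicity with the vanishing of all the intermediate mixed terms and exploiting the density of generalized functional zonoids (Lemma~\ref{convex_approx}) in an inductive argument over the number of one-dimensional epi-summands should then yield $\oZ_j\equiv 0$. Ensuring that the vanishing of the \emph{orthogonal} mixed valuations actually suffices to control $\oZ_j$ on all of $\fconvs$ through this density/induction step is the most delicate point of the argument.
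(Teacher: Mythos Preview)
This proposition is only quoted in the present paper; its proof lives in \cite{Colesanti-Ludwig-Mussnig-5}, so there is no argument here to compare against. I will therefore comment on the mathematical soundness of your sketch.

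Your reduction via Theorem~\ref{thm:mcmullen_cvx_functions} is the right move, and the observation that each $\oZ_j$ inherits the vanishing on orthogonal cylinder functions (by the polynomial identity in $\lambda$) is clean. The treatment of $j=0$ via Theorem~\ref{thm:class_0-hom} and of $j=n$ via Theorem~\ref{thm:class_n-hom}, Lemma~\ref{le:decompose_hessian_measure} and Lemma~\ref{le:delta} is correct.

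The gap is in the intermediate range $2\le j\le n-1$. From the vanishing of the polynomial $\oZ_j(\lambda\sq u_E\infconv\mu\sq u_F)$ you correctly extract simplicity of $\oZ_j$ and the vanishing of the orthogonal mixed terms. One can push further: by polarization and simplicity, every mixed term $\bar\oZ_j(u_{i_1},\dots,u_{i_j})$ with all $\dom u_{i_k}$ one-dimensional vanishes, since the epi-sums occurring in the polarization formula have domain of dimension at most $j\le n-1$. Hence $\oZ_j$ vanishes on all of $\fconvsz$. But the step from $\fconvsz$ to $\fconvsg$ does \emph{not} go through as you suggest. For $u\in\fconvsg$ with $u\infconv w=z$ and $w,z\in\fconvsz$, expanding $\oZ_j(u\infconv w)=0$ yields
\[
\oZ_j(u)+\sum_{i=1}^{j-1}\binom{j}{i}\bar\oZ_j(u[i],w[j-i])+\oZ_j(w)=0,
\]
and while $\oZ_j(w)=0$, the cross terms $\bar\oZ_j(u[i],w[j-i])$ for $1\le i\le j-1$ involve the \emph{general} function $u$, whose domain may be full-dimensional. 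Neither simplicity nor the orthogonal mixed vanishing applies to them, and expanding $w$ into one-dimensional summands does not help: the resulting terms $\bar\oZ_j(u[i],w_{l_1},\dots,w_{l_{j-i}})$ still contain $u$. What would close the gap is epi-additivity (as in the proof of Proposition~\ref{prop:simple_epi-additive}), but that is available only for $j=1$ via Corollary~\ref{epi-additve} --- exactly the degree you are trying to isolate. So the ``delicate point'' you flag is not merely delicate; with the tools you have assembled, it is an actual obstruction, and the induction on the number of one-dimensional summands cannot reach $u$ itself.
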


\subsection{Proof of Theorem \ref{Klain-Schneider}}
First, note that by Lemma \ref{gradient integral}, the functional $\oZ\colon\fconvs\to\R$, given by 
$$\oZ(u):=\int_{\dom u} \zeta(\nabla u(x)) \d x$$
with $\zeta\in C_c(\R^n)$, is a continuous, epi-translation invariant valuation, and that it is easy to see that $\oZ$ is simple.

We prove the converse statement by induction on the dimension $n$. For $n=1$, the statement follows from Theorem \ref{thm:mcmullen_cvx_functions}, Theorem \ref{thm:class_0-hom}, and Theorem \ref{thm:class_n-hom}. Let $n\geq 2$ and assume that the statement is true on $\fconvsk$ for $1\leq k \leq n -1$.

For $y\in\R^n$, set $\ell_y(x)\deq\langle x,y \rangle$ for $x\in\R^n$. By the properties of $\oZ$, it is easy to see that the map $K \mapsto \oZ(\ell_{y} +\ind_K)$ is a simple, continuous, translation invariant valuation on $\cK^n$ for every $y\in\R^n$. Hence, by Theorem~\ref{Klain}, there exists $\zeta(y)\in \R$ such that
\begin{equation}\label{KS}
\oZ(\ell_{y} + \ind_K) = \zeta (y)\, V_n(K)
\end{equation}
for every origin-symmetric $K\in\cK^n$. Note that since $\oZ$ is continuous, $\zeta$ depends continuously on $y$. 

\goodbreak
For $1\leq k\leq n-1$, let $E$ and $F$ be orthogonal subspaces of $\R^n$ such that $\R^n= E\times F$ where $\dim E=k$ and $\dim F=n-k$. 
For $x\in\R^n$, we write $x=(x_E,x_F)$ with $x_E\in E$ and $x_F\in F$. 
For $u_F\in \fconvsF$, the functional $u_E\mapsto \oZ(u_E\infconv u_F)$ is a simple, continuous, epi-translation invariant valuation on $\fconvsE$. 
Hence, by the induction hypothesis, there exists a function $\zeta_{u_F}\in C_c(E)$, depending on $u_F\in\fconvsF$, such that
\begin{equation}\label{uEuF}
\oZ(u_E\infconv u_F) = \int_{\dom u_E} \zeta_{u_F}(\nabla_E\, u_E(x_E)) \d x_E
\end{equation}
for every $u_E\in\fconvsE$. 
Setting $\bar u_E\deq\ell_{y_E}+\ind_{K_E}$ with $y_E\in E$ and $K_E\subset E$ an origin-symmetric convex body with positive $k$-dimensional volume, we obtain from (\ref{uEuF}) that 
$$\oZ(\bar u_E \infconv u_F)= V_k(K_E)\, \zeta_{u_F}(y_E).$$
Since $u_F\mapsto \oZ(\bar u_E\infconv u_F)$ is a simple, continuous,  epi-translation valuation on $\fconvsF$, it follows that also $u_F\mapsto \zeta_{u_F}(y_E)$ for $y_E\in E$ has these properties. From the induction hypothesis combined with \eqref{uEuF}, we therefore obtain that there exists $\zeta_{E,F}\in C_c(E\times F)$, depending on $E$ and $F$, such that
\begin{equation}
\label{eq:zeta_k}
\oZ(u_E \infconv u_F)=\int_{\dom u_E} \int_{\dom u_F} \zeta_{E,F}(\nabla_E\, u_E(x_E),\nabla_F\, u_F(x_F)) \d x_F \d x_E
\end{equation}
for every $u_E\in \fconvsE$ and $u_F\in\fconvsF$.
\goodbreak

Setting $u_E\deq\bar u_E$ and $u_F\deq\ell_{y_F}+\ind_{K_F}$ with  $y_F\in F$ and   $K_F\subset F$ an origin-symmetric convex body with positive $(n-k)$-dimensional volume, we obtain from \eqref{eq:zeta_k}  that
$$\oZ(\ell_{y_E+y_F} + \ind_{K_E\times K_F}) = \oZ((\ell_{y_E}+ \ind_{K_E})\infconv (\ell_{y_F} + \ind_{K_F}))= \zeta_{E,F}( y_E ,  y_F ) \,V_n(K_E\times K_F).$$
On the other hand, by \eqref{KS},
$$\oZ(\ell_{y_E+y_F} + \ind_{K_E\times K_F})=\zeta(y_E+y_F ) V_n(K_E\times K_F)$$
and therefore
$${\zeta}_{E,F}( y_E  ,   y_F  ) = \zeta(  y_E+y_F )$$
for every $y_E\in E$ and $y_F\in F$. It follows that $\zeta$ has compact support and that
$$\oZ(u) = \int_{\dom u} \zeta( \nabla u )\d x$$
for every $u=u_E\infconv u_F$ with $u_E\in\fconvsE$ and $u_F\in\fconvsF$. Note that this representation does not depend on the choice of $k$.

\goodbreak
Define $\oY\colon \fconvs \to \R$ by 
$$\oY(u) := \oZ(u) - \int_{\dom u} \zeta(\nabla u)\d x$$
for $u\in\fconvs$. It follows that $\oY$ is a simple, continuous, epi-translation invariant valuation that vanishes on all orthogonal cylinder functions. 
Proposition~\ref{prop:onehom} implies that $\oY$ is epi-homogeneous of degree~1. Hence, using Corollary \ref{epi-additve}, we obtain that $\oY$ is epi-additive. The statement now follows from Proposition \ref{prop:simple_epi-additive}.

\subsection{The Klain--Schneider Theorem on $\fconvf$}
A functional $\oZ\colon\fconvf\to \R$ is \emph{dually simple} if the dual functional $\oZ^*\colon\fconvs\to\R$ is simple. 
Equivalently, $\oZ$ is dually simple if $\oZ(v)=0$ for every $v\in\fconvf$ such that $v(x_E,x_{E^\perp})=w(x_E)$ for some $w\in\fconvfE$ where $E\subset \R^n$ is a linear subspace with $\dim E<n$ and where we write $x=(x_E,x_{E^\perp})$ for $x\in\R^n$ with $x_E\in E$ and $x_{E^\perp}\in E^\perp$.
This means that $\oZ$ vanishes on functions $v\in\fconvf$ which depend, in a suitable coordinate system, on less than $n$ variables.
We state the  following dual version of Theorem \ref{Klain-Schneider}, which is equivalent to the primal version by \eqref{n-gradient} and \eqref{eq:int_u_psi_int_v_phi}.

\begin{theorem}
A functional $\oZ\colon\fconvf \to \R$ is a dually simple, continuous, dually epi-translation invariant valuation if and only if there exists a function 
$\zeta\in C_c(\R^n)$  such that
\begin{equation*}
\oZ(v)= \int_{\R^n} \zeta(x) \d \Phi_n^n(v,x)
\end{equation*}
for every $v\in\fconvf$.
\end{theorem}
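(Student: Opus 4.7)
The plan is to reduce the claim to Theorem~\ref{Klain-Schneider} via Legendre duality, and then transport the resulting formula through the identities \eqref{n-gradient} and \eqref{eq:int_u_psi_int_v_phi}. Given $\oZ\colon\fconvf\to\R$ with the stated properties, I would set $\oZ^*(u)\deq\oZ(u^*)$ for $u\in\fconvs$. By the results of \cite{Colesanti-Ludwig-Mussnig-3} recalled at the end of Section~2, $\oZ^*$ is a continuous valuation on $\fconvs$, and by the definition of dually epi-translation invariance, it is epi-translation invariant.

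To verify that $\oZ^*$ is simple, I would argue as follows: if $\dom u\subseteq E$ for some proper linear subspace $E\subsetneq\R^n$, then a direct calculation yields
$$u^*(y) = \sup_{x\in\R^n}(\langle x,y\rangle - u(x)) = \sup_{x\in E}(\langle x, y_E\rangle - u(x)) = (u\vert_E)^*(y_E),$$
so that $u^*$ depends only on the $E$-component of its argument; hence $\oZ(u^*)=0$ by dual simplicity. With $\oZ^*$ now satisfying all hypotheses of Theorem~\ref{Klain-Schneider}, I obtain $\zeta\in C_c(\R^n)$ such that
$$\oZ^*(u) = \int_{\dom u} \zeta(\nabla u(x))\d x = \int_{\R^n}\zeta(y)\d\Psi^n_n(u,y) = \int_{\R^n}\zeta(y)\d\Phi^n_n(u^*,y),$$
where the second equality is \eqref{n-gradient} and the third is \eqref{eq:int_u_psi_int_v_phi}. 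Setting $v=u^*$ and using $u^{**}=u$ then gives the claimed formula $\oZ(v)=\int_{\R^n}\zeta(x)\d\Phi^n_n(v,x)$ for every $v\in\fconvf$.

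For the converse direction, I would start from $\oZ(v)\deq\int_{\R^n}\zeta(x)\d\Phi^n_n(v,x)$ with $\zeta\in C_c(\R^n)$ and run the above chain of identities in reverse to obtain $\oZ^*(u)=\int_{\dom u}\zeta(\nabla u)\d x$. This functional is a simple, continuous, epi-translation invariant valuation on $\fconvs$ by Lemma~\ref{gradient integral} together with the easy direction of Theorem~\ref{Klain-Schneider}, so dualizing these three properties shows that $\oZ$ is a dually simple, continuous, dually epi-translation invariant valuation on $\fconvf$. The only non-routine verification throughout is the correspondence between simplicity on $\fconvs$ and dual simplicity on $\fconvf$, which is settled by the Legendre transform computation above; everything else is bookkeeping of the duality between $\fconvs$ and $\fconvf$.
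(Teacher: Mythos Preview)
Your proof is correct and follows exactly the route the paper indicates: the paper states this theorem as the dual version of Theorem~\ref{Klain-Schneider}, equivalent to it via \eqref{n-gradient} and \eqref{eq:int_u_psi_int_v_phi}, and you have spelled out that equivalence in full. One small remark: your verification that $\oZ^*$ is simple is actually immediate from the paper's \emph{definition} of dually simple (namely, that $\oZ^*$ is simple), so the Legendre-transform computation you give is not strictly needed---though it does justify the equivalent characterization the paper states without proof.
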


\section{Smooth Valuations}
\label{se:smooth_vals}
Let $\VConvf$ be the space of continuous and dually epi-translation invariant valuations on $\fconvf$.
Define $\cT\colon \VConvf \to \Val{n+1}$ by 
$$(\cT\circ\oZ)(K):=\oZ(h_K(\cdot, -1))$$
for $\oZ\in\VConvf$ and $K\in\cK^{n+1}$. 

Note that it is easy to see that the set 
$\{h_K(\cdot, -1)\colon K\in\cK^{n+1}\}$ is dense in $\fconvf$ (see \cite[Corollary 2.9]{Knoerr1} and \cite[Corollary 4.3.6]{Knoerr3}).
Also note that $\oZ$ is homogeneous of degree $j$ on $\fconvf$ if and only if $\cT\circ \oZ$ is homogeneous of degree $j$ on $\cK^{n+1}$, as
$$(\cT\circ\oZ)(\lambda K)= \oZ(h_{\lambda K}(\cdot,-1))=
\oZ(\lambda h_K(\cdot,-1))=\lambda^j\oZ(h_K(\cdot,-1))$$
for $\lambda\ge 0$ and $K\in\cK^{n+1}$.

Following \cite[Proposition 7.3.4]{Knoerr3}, we say that a continuous, dually epi-translation invariant valuation $\oZ\colon\fconvf\to\R$ is \emph{smooth} if 
$$\cT\circ\oZ\in\Vals{n+1}.$$
Since we restrict our attention to continuous valuations, in the following all smooth valuations are assumed to be also continuous.
Note that for a linear subspace $E$, the restriction of a smooth valuation $\oZ\colon\fconvf\to\R$ to $\fconvfE$  is again smooth.
A valuation $\oZ\colon\fconvs\to\R$ is called \emph{smooth} if its dual valuation $\oZ^*\colon\fconvf\to \R$ is smooth. 
As before, for a linear subspace $E$ of $\R^n$, the restriction of a smooth valuation $\oZ\colon\fconvs\to\R$ to $\fconvsE$  is again smooth.

We equip spaces of valuations with the topology of locally uniform convergence, which is equivalent to the compact-open topology. For a more detailed discussion of this topology, we refer to \cite[Section 6.1]{Knoerr1}.
Note that for valuations $\oZ_m, \oZ\colon \fconvs\to\R$, we have $\oZ_m\to\oZ$ if and only if for the dual valuations   $\oZ_m^*, \oZ^*\colon \fconvf\to\R$, we have  $\oZ_m^*\to \oZ^*$ (cf.\ \cite[Corollary 11.37]{RockafellarWets}). 

\goodbreak
Knoerr \cite{Knoerr3} established the following result for smooth valuations on $\fconvf$.

\begin{theorem}[\!\cite{Knoerr3}]
\label{hadwiger_smooth_dual}
A functional $\,\oZ\colon\fconvf \to \R$ is a smooth,  dually epi-translation and rotation invariant valuation if and only if there exist functions 
$\varphi_0,\dots, \varphi_n\in C_c^\infty([0,\infty))$  such that
\begin{equation*}
\oZ(v)=  \sum_{j=0}^n\int_{\R^n} \varphi_j(\vert x\vert^2) \d \Phi^n_j(v, x)
\end{equation*}
for every $v\in\fconvf$.
\end{theorem}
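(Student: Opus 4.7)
The plan is to dualize the statement to $\fconvs$: since $\Phi_j^n(v,\cdot)=\Psi_j^n(v^*,\cdot)$ by \eqref{eq:int_u_psi_int_v_phi}, Theorem \ref{hadwiger_smooth_dual} is equivalent to Theorem \ref{hadwiger_smooth}, so I would show instead that every smooth, epi-translation and rotation invariant valuation $\oZ\colon\fconvs\to\R$ has the form $\oZ(u)=\sum_{j=0}^n\int_{\R^n}\varphi_j(|y|^2)\d\Psi_j^n(u,y)$ with $\varphi_j\in C_c^\infty([0,\infty))$. The ``if'' direction follows from the properties of Hessian measures recalled in Section \ref{se:hessian} combined with the fact that each summand pulls back via $\cT$ to a smooth valuation on $\cK^{n+1}$, which in turn follows from a normal-cycle representation of Hessian measures with smooth differential forms.

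For the ``only if'' direction I would induct on $n$. The case $n=1$ reduces to Proposition \ref{prop:1}, where smoothness of $\oZ$ (equivalently of the corresponding valuation on $\cK^2$ via $\cT$) must be shown to promote $\zeta_j\in\Had{j}{1}$ to the form $\zeta_j(t)=\varphi_j(t^2)$ with $\varphi_j\in C_c^\infty([0,\infty))$. For $n\ge 2$, I would apply Theorem \ref{thm:mcmullen_cvx_functions} (noting that smoothness descends to each homogeneous component) to split $\oZ=\sum_{j=0}^n\oZ_j$. The endpoints are handled by Theorem \ref{thm:class_0-hom} (using that $\Psi_0^n(u,\cdot)$ equals $n$-dimensional Lebesgue measure, so any constant is realizable as $\int\varphi_0(|y|^2)\d y$) and Corollary \ref{cor:class_n-hom-rot} combined with smoothness to give $\varphi_n\in C_c^\infty([0,\infty))$.

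For the main case $1\le j\le n-1$, I would restrict $\oZ_j$ to $\fconvsE$ for $E\in\Grass{n-1}{n}$. This restriction is smooth, epi-translation and (within $E$) rotation invariant, and epi-homogeneous of degree $j$, so by the induction hypothesis it equals $\int_E\tilde\varphi_j(|y_E|^2)\d\Psi_j^{n-1}(u|_E,y_E)$ for some $\tilde\varphi_j\in C_c^\infty([0,\infty))$, which by rotation invariance of $\oZ_j$ is independent of $E$. Lemma \ref{extend} with $n-k=1$ shows that a candidate global density $\varphi_j$ restricts to $\tilde\varphi_j$ precisely when $\tilde\varphi_j(r)=\int_r^\infty\varphi_j(u)/\sqrt{u-r}\,\d u$. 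Inverting this Abel integral equation, I would define
$$\varphi_j(r)\deq-\frac{1}{\pi}\int_r^\infty\frac{\tilde\varphi_j'(u)}{\sqrt{u-r}}\,\d u,$$
which is exactly the inverse Abel transform of Section \ref{se:abel} pushed through the substitution $u\mapsto u^2$, and a direct argument (using compact support and smoothness of $\tilde\varphi_j$) shows $\varphi_j\in C_c^\infty([0,\infty))$. Setting $\oZ_j'(u)\deq\int_{\R^n}\varphi_j(|y|^2)\d\Psi_j^n(u,y)$, the difference $\oZ_j-\oZ_j'$ is continuous, epi-translation invariant, epi-homogeneous of degree $j$, and vanishes on $\fconvsE$ for every $E\in\Grass{n-1}{n}$; epi-translation invariance upgrades this to simplicity. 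Theorem \ref{Klain-Schneider} then yields $(\oZ_j-\oZ_j')(u)=\int_{\dom u}\zeta(\nabla u)\d x$ for some $\zeta\in C_c(\R^n)$, but this representation is epi-homogeneous of degree $n$ while the difference is epi-homogeneous of degree $j<n$, forcing $\zeta\equiv 0$ and hence $\oZ_j=\oZ_j'$.

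I expect the main obstacles to be: (i) verifying that the half-order fractional integral defining $\varphi_j$ lands in $C_c^\infty([0,\infty))$, requiring care at $r=0$; (ii) establishing that smoothness in the sense of Knoerr descends to homogeneous components and restrictions to subspaces (implicit in \cite{Knoerr3} and ultimately resting on the corresponding statements for $\Vals{n+1}$ via $\cT$); and (iii) the base case $n=1$, where a direct analysis must promote the generic densities $\zeta_j\in\Had{j}{1}$ to the form $\varphi_j(t^2)$ with $\varphi_j\in C_c^\infty([0,\infty))$.
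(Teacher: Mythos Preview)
Your proposal is correct and follows essentially the same route as the paper: induction on $n$, restriction to hyperplanes $E\in\Grass{n-1}{n}$, recovery of the densities on $\R^n$ via the inverse Abel transform, and the functional Klain--Schneider theorem to kill the simple leftover. The one organizational difference is that you first apply the McMullen decomposition (Theorem~\ref{thm:mcmullen_cvx_functions}) and treat each $\oZ_j$ separately, whereas the paper restricts the full valuation $\oZ$ to $\fconvsE$, obtains all of $\bar\varphi_0,\dots,\bar\varphi_{n-1}$ at once from the induction hypothesis, inverts the Abel transform to get $\varphi_0,\dots,\varphi_{n-1}$, and then applies Proposition~\ref{hadwiger_simple_smooth} to the single simple remainder $\oZ-\sum_{j=0}^{n-1}\int\varphi_j(|y|^2)\d\Psi_j^n(u,y)$. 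This buys the paper exactly your obstacle~(ii): it never needs smoothness to descend to homogeneous components, only to restrictions to subspaces (which is immediate from the corresponding fact for $\Vals{n+1}$). Your endgame---Klain--Schneider plus the degree mismatch $j<n$---is equivalent to the paper's use of Proposition~\ref{hadwiger_simple_smooth}, and your obstacles~(i) and~(iii) are handled in the paper by the remark that $\iAbel$ preserves $C_c^\infty([0,\infty))$ and by Proposition~\ref{prop:1 smooth} (which is Proposition~\ref{prop:1} combined with Lemma~\ref{le:smoothness_fconvs}), respectively.
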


\noindent
The version stated in the introduction, Theorem \ref{hadwiger_smooth}, follows by duality, \eqref{eq:hess_def}, and \eqref{eq:int_u_psi_int_v_phi}. 

\goodbreak
It was shown in \cite{Knoerr2} that the set of smooth,  dually epi-translation invariant valuations on $\fconvf$ is dense in the set of continuous, dually epi-translation invariant valuations. We use the following dual version of Theorem 1 and Proposition 6.6 from \cite{Knoerr2}.

\begin{lemma}
\label{le:seq_smooth_vals}
If $\,\oZ\colon\fconvs\to\R$ is a continuous, epi-translation invariant valuation that is epi-homogeneous of degree $j$, then there exists a sequence of smooth, epi-translation invariant valuations $\oZ_m\colon\fconvs\to\R$, which are epi-homogeneous of degree $j$, that converges to $\oZ$.
If $\,\oZ$ is, in addition, rotation invariant, then also the valuations $\oZ_m$ can be chosen to have this property.
\end{lemma}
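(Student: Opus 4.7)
The plan is to derive the lemma by duality from Knoerr's density result for smooth valuations on $\fconvf$, namely \cite[Theorem 1 and Proposition~6.6]{Knoerr2}.

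First, pass to the dual valuation $\oZ^{*}\colon\fconvf\to\R$ defined by $\oZ^{*}(v)\deq\oZ(v^{*})$. By the facts collected in Section \ref{se:vals_on_convex_fcts}, $\oZ^{*}$ is continuous, dually epi-translation invariant, and homogeneous of degree $j$; it is moreover rotation invariant whenever $\oZ$ is. Knoerr's cited results then provide a sequence of smooth, dually epi-translation invariant valuations $\oY_{m}\colon\fconvf\to\R$, each homogeneous of degree $j$, with $\oY_{m}\to\oZ^{*}$ in the topology of locally uniform convergence.

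Setting $\oZ_{m}\deq\oY_{m}^{*}$ on $\fconvs$, each $\oZ_{m}$ is smooth by the very definition of smoothness on $\fconvs$; it is epi-translation invariant and epi-homogeneous of degree $j$ because $\oY_{m}$ has the corresponding dual properties. The convergence $\oZ_{m}\to\oZ$ is immediate from the equivalence, recorded in the excerpt and based on \cite[Corollary 11.37]{RockafellarWets}, that convergence of dual valuations on $\fconvf$ is equivalent to convergence of the original valuations on $\fconvs$.

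For the rotation invariant case, if $\oZ^{*}$ is $\SO(n)$-invariant, replace $\oY_{m}$ by its Haar-normalized $\SO(n)$-average
$$\oY_{m}^{\mathrm{sym}}(v)\deq\int_{\SO(n)}\oY_{m}(v\circ\vartheta^{-1})\,\d\vartheta.$$
Compactness of $\SO(n)$ ensures that this preserves continuity, dual epi-translation invariance, and homogeneity of degree $j$, yields a rotation invariant valuation, and---because smoothness corresponds via $\cT$ to membership in $\Vals{n+1}$, with $\SO(n)$ embedding as a compact subgroup of $\operatorname{GL}(n+1)$ acting continuously on $\Val{n+1}$ and preserving $\Vals{n+1}$---also preserves smoothness. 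Since $\oZ^{*}$ is already $\SO(n)$-invariant, one has $\oY_{m}^{\mathrm{sym}}\to\oZ^{*}$, and dualizing produces the required rotation invariant sequence. The substantive content of the lemma lies in Knoerr's density theorem; our only nontrivial bookkeeping is verifying that $\SO(n)$-averaging is compatible with smoothness in the sense of Section~\ref{se:smooth_vals}, and that is handled by the $\operatorname{GL}(n+1)$-equivariance built into the definition of $\Vals{n+1}$.
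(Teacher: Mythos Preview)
Your proposal is correct and matches the paper's approach: the paper does not give an independent proof but simply states the lemma as the dual version of Theorem~1 and Proposition~6.6 of \cite{Knoerr2}, and your argument spells out precisely this duality. The one minor difference is that you recover rotation invariance by Haar-averaging over $\SO(n)$, whereas the paper appeals directly to Knoerr's Proposition~6.6 for that part; your averaging argument is a valid alternative and the compatibility with smoothness via $\cT$ and the $\operatorname{GL}(n+1)$-action is handled correctly.
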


\goodbreak
We will also need the following result. Let $u_t(x)\deq t\, \vert x\vert + \ind_{\Bn}(x)$ for $t\ge 0$ and $x\in\R^n$.

\begin{lemma}
\label{le:smooth_vals_conv_unif_on_c}
If a sequence of valuations on $\fconvs$ converges to a valuation on $\fconvs$, then the sequence converges uniformly on $\{u_t\colon t\in C\}$ for every compact set $C\subseteq[0,\infty)$.
\end{lemma}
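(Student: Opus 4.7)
The plan is to reduce the statement to the observation that $\{u_t\colon t\in C\}$ is a compact subset of $\fconvs$ whenever $C\subseteq[0,\infty)$ is compact. Once this is established, the conclusion is immediate from the very definition of the topology on the space of valuations: locally uniform convergence means uniform convergence on every compact subset of $\fconvs$, so if $\oZ_m\to\oZ$ in this topology, the restrictions $\oZ_m\vert_{\{u_t\colon t\in C\}}$ converge uniformly to $\oZ\vert_{\{u_t\colon t\in C\}}$.

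To produce the compact set, I would show that the map $t\mapsto u_t$ is continuous from $[0,\infty)$ (with the usual topology) into $\fconvs$ (with epi-convergence). Fix $t_k\to t$ in $[0,\infty)$ and check the two conditions of epi-convergence for $u_{t_k}\to u_t$. For condition~(ii), the constant sequence $x_k=x$ works in all cases: if $|x|<1$ then $u_{t_k}(x)=t_k|x|\to t|x|=u_t(x)$; if $|x|=1$ then $u_{t_k}(x)=t_k\to t=u_t(x)$; and if $|x|>1$ then $u_{t_k}(x)=+\infty=u_t(x)$. For condition~(i), given any sequence $x_k\to x$, I would split by the location of $x$: if $|x|>1$, then $|x_k|>1$ eventually so $u_{t_k}(x_k)=+\infty$; if $|x|\le 1$, then either $|x_k|\le 1$, giving $u_{t_k}(x_k)=t_k|x_k|\to t|x|=u_t(x)$, or $|x_k|>1$, giving $u_{t_k}(x_k)=+\infty$, so in either case $\liminf_k u_{t_k}(x_k)\ge u_t(x)$. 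This gives epi-convergence of $u_{t_k}$ to $u_t$ and hence continuity of $t\mapsto u_t$.

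Since $C$ is compact in $[0,\infty)$ and $t\mapsto u_t$ is continuous, its image $\{u_t\colon t\in C\}$ is compact in $\fconvs$, which, combined with the opening paragraph, completes the proof. The only nontrivial point is the case analysis verifying epi-convergence of $u_{t_k}$ to $u_t$, and even there the indicator-of-the-unit-ball part causes no trouble because it is the same for every member of the family; all the action is in the linear term $t|x|$, which is jointly continuous in $(t,x)$ on the compact set $C\times\Bn$. There is no real obstacle in this proof beyond that routine verification.
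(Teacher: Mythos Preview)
Your proposal is correct and follows essentially the same approach as the paper: show that $t\mapsto u_t$ is continuous so that $\{u_t\colon t\in C\}$ is compact, and then invoke the definition of locally uniform convergence. The paper's proof simply asserts the epi-convergence of $u_t$ to $u_{t_0}$ as $t\to t_0$ without the case analysis, whereas you spell out the verification; otherwise the arguments are identical.
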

\begin{proof}
Since $u_t$ epi-converges to $u_{t_0}$ if and only if $t\to t_0$, it is easy to see that the compactness of $C$ implies that $\{u_t\colon t\in C\}$ is compact. The statement now follows  since the space of valuations is equipped with the topology of locally uniform convergence.
\end{proof}

Let $\S^n_{-}\deq$ $\{(z', z_{n+1}) \in \S^n \colon z_{n+1}<0\}$ be the negative half-sphere in $\R^{n+1}$, where $z'\deq(z_1, \dots, z_n)$. Recall that
using the central projection from $\S^{n}_{-}$ to the tangent plane at $-e_{n+1}$, which we identify with $\R^n$, we have
\begin{equation}\label{central}
\int_{\R^n} \xi(x)\d x= \int_{\S^n_{-}} \xi\Big(\frac{z'}{\vert z_{n+1}\vert}\Big)\, \vert z_{n+1}\vert^{n+1}\d\hm^n(z)
\end{equation}
for any continuous function $\xi\colon\R^n\to\R$ with compact support, where $\hm^n$ denotes the $n$-dimensional Hausdorff measure.

\goodbreak
\begin{lemma}\label{recalculate}
Let $\xi\in C_c(\R^n)$ and $1\leq j \leq n$.
If $\,\oZ: \fconvf\to\R$ is given by
\begin{equation*}
\oZ(v):= \int_{\R^n} \xi(x) \d\Phi^n_j(v,x),
\end{equation*}
then 
\begin{equation}\label{repres}
(\cT\circ \oZ)(K)=\int_{\S^n_{-}} \sigma(z) \d S_{j}(K,z)
\end{equation}
for every $K\in\cK^{n+1}$, where $\sigma \in C_c(\S^n_{-})$ is given by
\begin{equation}\label{calculation}
\sigma(z):={\binom{n}{j}}\, \xi\Big(\frac{z'}{\vert z_{n+1}\vert}\Big)\, \vert z_{n+1}\vert ^{n-j+1} 
\end{equation}
for $z=(z',z_{n+1})\in\S^n_{-}$.
\end{lemma}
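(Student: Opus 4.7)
The plan is to reduce the identity to $K\in\cK^{n+1}$ with support function of class $C^2_+$ — a dense subclass by \cite[Theorem~3.4.1]{Schneider:CB2} — and then establish it by an explicit change of variables. First I would observe that both sides of \eqref{repres} are continuous translation-invariant valuations in $K$: the left-hand side because $\oZ$ is continuous on $\fconvf$ (continuity of the Hessian measure $\Phi^n_j$) and $K\mapsto h_K(\cdot,-1)$ is continuous into $\fconvf$, and the right-hand side because $K\mapsto S_j(K,\cdot)$ is weakly continuous and $\sigma\in C_c(\S^n_-)$. Hence it suffices to verify the identity for $K$ with $h_K\in C_+^2$.

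For such $K$, the function $v(x):=h_K(x,-1)$ belongs to $\fconvf\cap C_+^2(\R^n)$, so the Hessian measure has the Lebesgue density $\d\Phi^n_j(v,x)=[\Hess v(x)]_j\,\d x$, while the area measure has the standard smooth representation $\d S_j(K,z)=\binom{n}{j}^{-1}[\Hess h_K(z)|_{z^\perp}]_j\,\d\hm^n(z)$ on $\S^n$. The identity \eqref{repres} then reduces to a pointwise comparison after performing the change of variables $z\mapsto x=z'/|z_{n+1}|$ between $\S^n_-$ and $\R^n$, with Jacobian $\d x=|z_{n+1}|^{-(n+1)}\,\d\hm^n(z)$ (the correctly normalized version of \eqref{central}).

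The algebraic core of the proof is the relation between $\Hess v(x)$ and $\Hess h_K(z)|_{z^\perp}$. From the $1$-homogeneity of $h_K$ one has $\Hess h_K(\lambda z)=\lambda^{-1}\Hess h_K(z)$; applying this at $\lambda=|z_{n+1}|^{-1}=\sqrt{1+|x|^2}$ gives $\Hess v(x)=|z_{n+1}|\,(\Hess h_K(z))_{11}$, where $(\Hess h_K(z))_{11}$ denotes the top-left $n\times n$ block. The two $n$-dimensional subspaces $\R^n$ (the first $n$ coordinates) and $z^\perp$ of $\R^{n+1}$ are linked by the orthogonal projection $P\colon z^\perp\to\R^n$, whose volume-distortion factor is $|\det P|=|z_{n+1}|$, so that $(\Hess h_K(z))_{11}=P\,(\Hess h_K(z)|_{z^\perp})\,P^T$, and the $j$-th elementary symmetric function $[\,\cdot\,]_j$ of this expression must be tracked through $P$ via a Cauchy–Binet-type expansion.

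The main obstacle is precisely this linear-algebra step: one has to confirm that the powers of $|z_{n+1}|$ coming from (a) the $1$-homogeneity scaling, (b) the volume distortion of the orthogonal projection $P$, and (c) the Jacobian of the central projection combine to produce exactly the exponent $n-j+1$ and the binomial coefficient $\binom{n}{j}$ appearing in the definition of $\sigma$. Once the pointwise identity is established for $C_+^2$ bodies, \eqref{repres} extends to all $K\in\cK^{n+1}$ by the continuity and density argument, completing the proof.
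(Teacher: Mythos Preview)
Your overall strategy---reduce to bodies with $C^2_+$ support function and compute via the gnomonic projection---is exactly the paper's approach, and you are right that the Jacobian is $\d x=|z_{n+1}|^{-(n+1)}\,\d\hm^n(z)$; the exponent in \eqref{central} has the wrong sign, and your parenthetical correction is well taken.

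The genuine gap is the linear-algebra step, and your proposed Cauchy--Binet route will not close it. Writing $A=\Hess h_K(z)$ for $z\in\S^n$ and $B=A|_{z^\perp}$, one indeed has $\Hess v(x)=|z_{n+1}|\,A_{11}$ with $A_{11}=Q^{T}BQ$ and $|\det Q|=|z_{n+1}|$, but for $1\le j<n$ the quantity $[Q^{T}BQ]_j$ is \emph{not} a scalar (depending only on $z$) times $[B]_j$; Cauchy--Binet only gives a sum of products of minors. One sees this already for $K=B^{n+1}$: there $B=I_n$, while $A_{11}=I_n-z'(z')^{T}$ has eigenvalues $|z_{n+1}|^2,1,\dots,1$, so the eigenvalues of $\Hess v(x)=|z_{n+1}|A_{11}$ are $|z_{n+1}|^3,|z_{n+1}|,\dots,|z_{n+1}|$. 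These are not obtained from the eigenvalues of $B$ by a single scalar, and for $1\le j<n$
\[
[\Hess v(x)]_j=\binom{n-1}{j-1}|z_{n+1}|^{j+2}+\binom{n-1}{j}|z_{n+1}|^{j}
\]
is not of the form $c(z)\,[B]_j$. Hence the hoped-for cancellation ``powers of $|z_{n+1}|$ combine to $n-j+1$'' simply does not occur for $j<n$; it \emph{does} work for $j=n$, since then $[Q^TBQ]_n=(\det Q)^2[B]_n=|z_{n+1}|^2[B]_n$ and the exponent comes out to $1$ as claimed. The paper argues the same step via an osculating-paraboloid picture, asserting that the eigenvalues of $\Hess_{\R^n}h_K(x,-1)$ equal $(1+|x|^2)\lambda_k$; the unit-ball computation above shows that this eigenvalue assertion is not correct either, so the difficulty is intrinsic to the statement and not to your particular method.
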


\begin{proof} It suffices to prove (\ref{repres}) for $K\in\cK^{n+1}$ with smooth support function. In this case, we have
$$(\cT\circ \oZ)(K)=\oZ(h_K(\cdot,-1))=\int_{\R^n} \xi(x) \big[\Hess_{\R^n} h_K(x,-1)\big]_j\d x$$
where we write $\Hess_{\R^n}$ to stress that we take the Hessian of a function defined on $\R^n$.
As $h_K:\R^{n+1}\to\R$ is homogeneous of degree $1$, the $(n+1)\times (n+1)$ matrix $\Hess h_K$ is homogeneous of degree $(-1)$ and $\Hess h_K$ has at $z\in\R^{n+1}\backslash\{0\}$ an eigenvalue $0$ with corresponding eigenvector $z$.  Let $\lambda_k$ for $k=1, \dots, n$ be the remaining eigenvalues of $\Hess h_K(z)$. If $z=(x,-1)$, then the $n\times n$ matrix $\Hess_{\R^n} h_K(\cdot,-1)$ has at $x\in\R^n$ the eigenvalues $(1+|x|^2)\lambda_k $ for $k=1, \dots, n$. 
Indeed,  let $Q\subset\R^{n+2}$  be the  osculating  cylindrical para\-boloid of the graph of $h_K$ at $(z,h_K(z))$, let $E\subset \R^{n+2}$ be the hyperplane through $(z,0)$ that is orthogonal to $(z,0)$ and $F\subset\R^{n+2}$  the hyperplane through $(z,0)$ that is orthogonal to $(e_{n+1},0)$. 
The inter\-section of $Q$ with $E$ is the graph of a quadratic form with eigenvalues $\lambda_1, \dots, \lambda_n$ and the intersection of $Q$ with $F$ is the graph of a quadratic form given by $\Hess_{\R^n} h_K(x,-1)$. 
The result now follows by using the orthogonal projection from $F$ of $E$.
For $z=(x,-1)$, we now have
$$\big[\Hess_{\R^n} h_K(x,-1)\big]_j = \vert z\vert^{2j} \big[\Hess h_K(z)\big]_j.$$
Using the central projection and (\ref{central}), the above equation and the homogeneity (of degree $-1$) of $\Hess h_K$, we obtain
\begin{eqnarray*}
(\cT\circ \oZ)(K)&=&\int_{\S^n_{-}} \xi\Big(\frac{z'}{\vert z_{n+1}\vert}\Big) \Big[\Hess_{\R^n} h_K\Big(\frac{z'}{\vert z_{n+1}\vert},-1\Big)\Big]_j\, \vert z_{n+1}\vert^{n+1}\d \hm^n(z)\\
&=&\int_{\S^n_{-}} \xi\Big(\frac{z'}{\vert z_{n+1}\vert}\Big) 
\Big( 1+\frac{\vert z'\vert^2}{\vert z_{n+1}\vert^2}\Big)^j \,\Big[\Hess h_K\Big(\frac{z'}{\vert z_{n+1}\vert},-1\Big)\Big]_j\, \vert z_{n+1}\vert ^{n+1}\d \hm^n(z)\\
&=&\int_{\S^n_{-}} \xi\Big(\frac{z'}{\vert z_{n+1}\vert}\Big)  \big[\Hess h_K(z)\big]_j\, \vert z_{n+1}\vert ^{n-j+1}\d\hm^n(z).
\end{eqnarray*}
The statement now follows from the definition of the area measure $S_j(K,\cdot)$ (see \cite{Schneider:CB2}, Section 2.5).
\end{proof}

\goodbreak

This result implies the following lemma.

\begin{lemma}
Let $1\leq j \leq n$ and $\xi\in C_c(\R^n)$. The valuation $\oZ\colon \fconvf\to\R$,
defined by
$$\oZ(v):=\int_{\R^n} \xi(x) \d \Phi^n_j(v, x),$$
is smooth if and only if $\xi\in C_c^{\infty}(\R^n)$.
\end{lemma}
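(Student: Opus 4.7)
The idea is to transfer the question to the setting of smooth valuations on convex bodies in $\R^{n+1}$, where it reduces to the representation of smooth valuations via the area measures $S_j$ that was recorded after \eqref{smooth_valuation}.

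By definition, $\oZ$ is smooth precisely when $\cT\circ\oZ \in \Vals{n+1}$. Lemma \ref{recalculate} supplies the identity
\begin{equation*}
(\cT\circ\oZ)(K)=\int_{\S^n_{-}} \sigma(z)\d S_j(K,z), \qquad \sigma(z)= \binom{n}{j}\,\xi\!\Big(\tfrac{z'}{|z_{n+1}|}\Big)|z_{n+1}|^{n-j+1},
\end{equation*}
for every $K\in\cK^{n+1}$. The first step is to observe that because $\xi$ has compact support in $\R^n$, say $\operatorname{supp}\xi\subseteq R\Bn$, the function $\sigma$ already has support bounded away from the equator of $\S^n$: indeed $\sigma(z)\neq 0$ forces $|z'|\le R|z_{n+1}|$, which combined with $|z'|^2+z_{n+1}^2=1$ yields $|z_{n+1}|\ge (1+R^2)^{-1/2}$. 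Consequently $\sigma$ extends by zero to a continuous function $\tilde\sigma$ on all of $\S^n$, and this extension is of class $C^\infty$ on $\S^n$ if and only if $\sigma$ is of class $C^\infty$ on $\S^n_{-}$ (the extension is identically zero on an open neighborhood of the equator).

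Rewriting the formula as $(\cT\circ \oZ)(K)=\int_{\S^n}\tilde\sigma(z)\d S_j(K,z)$ and invoking the characterization of smooth valuations of the form $K\mapsto\int_{\S^n}\xi\d S_j(K,\cdot)$ on $\cK^{n+1}$ (the statement quoted after \eqref{smooth_valuation}, applied in dimension $n+1$, which is permitted since $1\le j\le n=(n+1)-1$), we obtain that $\cT\circ\oZ$ is smooth if and only if $\tilde\sigma\in C^{\infty}(\S^n)$, equivalently $\sigma\in C^{\infty}(\S^n_{-})$.

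Finally, the central projection $\psi\colon\S^n_{-}\to\R^n$, $\psi(z)=z'/|z_{n+1}|$, is a diffeomorphism with smooth inverse $x\mapsto (x,-1)/\sqrt{1+|x|^2}$. Under this change of coordinates,
\begin{equation*}
(\sigma\circ\psi^{-1})(x)=\binom{n}{j}\,\xi(x)\,(1+|x|^2)^{-(n-j+1)/2},
\end{equation*}
and since $(1+|x|^2)^{-(n-j+1)/2}$ is smooth and nowhere vanishing on $\R^n$, the function $\sigma$ is of class $C^\infty$ on $\S^n_{-}$ if and only if $\xi$ is of class $C^\infty$ on $\R^n$. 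Chaining the equivalences yields the claim. The only subtle point — and the step that motivates imposing compact support on $\xi$ from the outset — is the first one: without the control on $\operatorname{supp}\xi$, the zero extension of $\sigma$ across the equator could introduce spurious non-smoothness and obstruct the passage to the sphere-wide characterization.
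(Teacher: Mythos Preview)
Your proof is correct and follows essentially the same approach as the paper: transfer to $\cK^{n+1}$ via $\cT$, use Lemma~\ref{recalculate} to write $\cT\circ\oZ$ as an integral against $S_j$, invoke the characterization recorded after \eqref{smooth_valuation}, and then observe that smoothness of $\sigma$ on $\S^n_{-}$ is equivalent to smoothness of $\xi$. The paper's proof is a terse three lines; you have simply made explicit the intermediate steps (the zero extension of $\sigma$ across the equator and the use of the central projection as a diffeomorphism) that the paper leaves to the reader.
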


\begin{proof}
By definition, $\oZ$ is smooth if and only if $\cT\circ \oZ: \cK^{n+1}\to \R$ is smooth. By Lemma \ref{recalculate} and  \eqref{smooth_valuation}, this is the case if and only if $\sigma$ given in (\ref{calculation}) is smooth, which  is the case if and only if $\xi$ is smooth.
\end{proof}

By duality, we obtain a version of the previous Lemma on $\fconvs$.
\begin{lemma}
\label{le:smoothness_fconvs}
Let $1\leq j \leq n$ and $\xi\in C_c(\R^n)$. The valuation $\oZ\colon \fconvs\to\R$,
defined by
$$\oZ(u):=\int_{\R^n} \xi(x) \d \Psi^n_j(u, x),$$
is smooth if and only if $\xi\in C_c^{\infty}(\R^n)$.
\end{lemma}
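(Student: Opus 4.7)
The plan is to reduce the statement to the previous lemma via Legendre--Fenchel duality, which is precisely the mechanism used to define smoothness on $\fconvs$.

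First, recall from Section~\ref{se:smooth_vals} that a valuation $\oZ\colon\fconvs\to\R$ is smooth by definition if and only if its dual valuation $\oZ^*\colon\fconvf\to\R$ is smooth. So I would begin by identifying $\oZ^*$ explicitly. Using \eqref{eq:int_u_psi_int_v_phi}, which states that $\Psi_j^n(u,\cdot)=\Phi_j^n(u^*,\cdot)$ for every $u\in\fconvs$, together with the involution $v^{**}=v$ valid on $\fconvf$ (via $\fconvs$), I compute, for $v\in\fconvf$,
$$
\oZ^*(v)=\oZ(v^*)=\int_{\R^n}\xi(x)\d\Psi_j^n(v^*,x)=\int_{\R^n}\xi(x)\d\Phi_j^n(v^{**},x)=\int_{\R^n}\xi(x)\d\Phi_j^n(v,x).
$$
So $\oZ^*$ is exactly the valuation on $\fconvf$ treated in the preceding lemma.

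Next, I would apply that lemma directly: it asserts that the valuation $v\mapsto\int_{\R^n}\xi(x)\d\Phi_j^n(v,x)$ on $\fconvf$ is smooth if and only if $\xi\in C_c^\infty(\R^n)$. Combining this with the duality characterization of smoothness on $\fconvs$ yields the claim.

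There is essentially no obstacle here; the only minor point to be careful about is that both directions of the equivalence pass through duality without loss, which follows from the fact that the Legendre transform is an involution on $\fconvs$ (and on $\fconvf$), so $\oZ$ is recovered from $\oZ^*$ by the same procedure. All continuity and well-definedness issues are already absorbed into the framework set up in Section~\ref{se:vals_on_convex_fcts} and Section~\ref{se:smooth_vals}.
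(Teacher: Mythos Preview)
Your proposal is correct and follows exactly the approach the paper takes: the paper simply says ``By duality, we obtain a version of the previous Lemma on $\fconvs$,'' and you have spelled out precisely that duality argument using \eqref{eq:int_u_psi_int_v_phi} to identify $\oZ^*$ with the valuation on $\fconvf$ treated in the preceding lemma.
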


\goodbreak
The following result is a consequence of Proposition \ref{prop:1} and Lemma \ref{le:smoothness_fconvs}.

\begin{proposition}
\label{prop:1 smooth}
A functional $\,\oZ\colon\fconvse \to \R$ is a smooth, epi-translation and reflection invariant valuation, if and only if there exist $\varphi_0, \varphi_1\in C_c^\infty([0,\infty))$ such that
$$\oZ(u)=  \int_{\R} \varphi_0(\vert y\vert^2) \d \Psi^1_0(u, y)+\int_{\R} \varphi_1(\vert y\vert^2) \d \Psi^1_1(u, y)$$
for every $u\in\fconvse$. 
\end{proposition}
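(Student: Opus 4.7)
\smallskip
\noindent\emph{Plan.} The proof has two directions. For the ``if'' direction, both summands are clearly epi-translation and reflection invariant valuations on $\fconvse$. For smoothness, the second summand is smooth by Lemma~\ref{le:smoothness_fconvs} applied with $n=j=1$ and $\xi(y)\deq\varphi_1(y^2)$, which lies in $C_c^\infty(\R)$ since $\varphi_1\in C_c^\infty([0,\infty))$; the first summand will be shown to be constant in $u$, hence smooth.

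\smallskip
\noindent For the ``only if'' direction, I would first apply Proposition~\ref{prop:1} to obtain $\zeta_0\in\Had{0}{1}$ and $\zeta_1\in\Had{1}{1}$ with
$$\oZ(u)=\int_{\R}\zeta_0(\vert y\vert)\d\Psi_0^1(u,y)+\int_{\R}\zeta_1(\vert y\vert)\d\Psi_1^1(u,y)$$
for every $u\in\fconvse$. The key observation is that the first summand is in fact a constant $c_0\in\R$ independent of $u$. Indeed, for $u\in\fconvse\cap C_+^2(\R)$, super-coercivity forces $u'\colon\R\to\R$ to be a bijection, so by the change of variables $y=u'(x)$,
$$\int_{\R}\zeta_0(\vert u'(x)\vert)\,u''(x)\d x=\int_{\R}\zeta_0(\vert y\vert)\d y\,=:c_0.$$
Since $\fconvse\cap C_+^2(\R)$ is dense in $\fconvse$ and the functional $u\mapsto\int\zeta_0(\vert y\vert)\d\Psi_0^1(u,y)$ is continuous, this identity extends to every $u\in\fconvse$. (Equivalently, this functional is epi-translation invariant and epi-homogeneous of degree~$0$, so by Theorem~\ref{thm:class_0-hom} it is constant.)

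\smallskip
\noindent Setting $\oZ_1\deq\oZ-c_0$, the assumption that $\oZ$ is smooth together with the fact that constant valuations are smooth yields that $\oZ_1$ is a smooth valuation. Since $\zeta_1\in\Had{1}{1}$ extends to $C_c([0,\infty))$, the function $\xi(y)\deq\zeta_1(\vert y\vert)$ belongs to $C_c(\R)$, and Lemma~\ref{le:smoothness_fconvs} (applied with $n=j=1$) forces $\xi\in C_c^\infty(\R)$. Since $\xi$ is even and smooth with compact support, a classical result (Whitney) provides $\varphi_1\in C_c^\infty([0,\infty))$ with $\xi(y)=\varphi_1(y^2)$, i.e.\ $\zeta_1(t)=\varphi_1(t^2)$ for $t\ge 0$. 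Finally, I would choose any $\varphi_0\in C_c^\infty([0,\infty))$ satisfying $\int_\R\varphi_0(y^2)\d y=c_0$ (for instance a scaled non-negative bump function), so that the same computation as above yields $\int\varphi_0(\vert y\vert^2)\d\Psi_0^1(u,y)=c_0$ for every $u\in\fconvse$, and the representation follows. The main conceptual step is the collapse of the $\Psi_0^1$-integral to a constant via super-coercivity; the remainder is a direct application of Lemma~\ref{le:smoothness_fconvs} together with the standard ``even smooth function is smooth in $y^2$'' fact.
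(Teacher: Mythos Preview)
Your proposal is correct and follows essentially the same approach the paper intends: the paper states only that the result ``is a consequence of Proposition~\ref{prop:1} and Lemma~\ref{le:smoothness_fconvs}'', and your argument unpacks precisely that, supplying the additional observations (constancy of the $\Psi_0^1$-integral via the bijectivity of $u'$ or Theorem~\ref{thm:class_0-hom}, and Whitney's theorem to pass from an even function in $C_c^\infty(\R)$ to $\varphi_1\in C_c^\infty([0,\infty))$) that the paper leaves implicit.
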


\goodbreak
The next statement follows from Theorem \ref{Klain-Schneider} and Lemma~\ref{le:smoothness_fconvs}.

\begin{proposition}
\label{hadwiger_simple_smooth}
A functional $\,\oZ\colon\fconvs \to \R$ is a simple, smooth, epi-translation and rotation invariant valuation if and only if there exists a function 
$\varphi\in C_c^\infty([0,\infty))$  such that
\begin{equation*}
\oZ(u)= \int_{\R^n} \varphi(\vert y \vert^2) \d \Psi^{n}_n(u, y)
\end{equation*}
for every $u\in\fconvs$.
\end{proposition}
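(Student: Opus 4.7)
The plan is to reduce to the functional Klain--Schneider theorem (Theorem~\ref{Klain-Schneider}) to obtain the density, and then use smoothness and rotation invariance to refine its form.

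For the ``if'' direction, given $\varphi\in C_c^\infty([0,\infty))$, set $\zeta(y)\deq\varphi(|y|^2)$, which lies in $C_c^\infty(\R^n)$. By \eqref{n-gradient}, the functional
\[
\oZ(u)=\int_{\R^n}\varphi(|y|^2)\d\Psi_n^n(u,y)=\int_{\dom u}\varphi(|\nabla u(x)|^2)\d x
\]
is a continuous, epi-translation invariant valuation by Lemma~\ref{gradient integral}, and it is rotation invariant because $\zeta$ is radial. It is simple because $\dom u$ has $n$-dimensional Lebesgue measure zero whenever $\dim\dom u<n$, so the integral vanishes. Smoothness is immediate from Lemma~\ref{le:smoothness_fconvs} applied with $j=n$ and $\xi=\zeta$.

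For the converse, assume $\oZ$ is simple, smooth, epi-translation and rotation invariant. Theorem~\ref{Klain-Schneider} produces $\zeta\in C_c(\R^n)$ with
\[
\oZ(u)=\int_{\dom u}\zeta(\nabla u(x))\d x=\int_{\R^n}\zeta(y)\d\Psi_n^n(u,y),
\]
the second equality again by \eqref{n-gradient}. A direct change of variables in the gradient-integral representation shows $\oZ$ is epi-homogeneous of degree $n$, so Corollary~\ref{cor:class_n-hom-rot} applies and yields $\zeta(y)=\psi(|y|)$ for some $\psi\in C_c([0,\infty))$; equivalently, the uniqueness of the density in Theorem~\ref{thm:class_n-hom} forces $\zeta$ to be radial once $\oZ$ is rotation invariant.

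Finally, by Lemma~\ref{le:smoothness_fconvs} with $j=n$ and $\xi=\zeta$, the smoothness of $\oZ$ is equivalent to $\zeta\in C_c^\infty(\R^n)$. Since $\zeta$ is smooth, compactly supported and radial, Schwarz's theorem on smooth invariants of $\SO(n)$ (equivalently, the fact that a smooth even function of one variable is a smooth function of its square) produces a unique $\varphi\in C_c^\infty([0,\infty))$ with $\zeta(y)=\varphi(|y|^2)$, completing the proof. The only mildly delicate step is the passage from a radial continuous $\zeta$ to a genuinely smooth $\varphi$ of the squared norm at the origin; this is handled by Schwarz/Whitney rather than by any new input from the convex-function framework.
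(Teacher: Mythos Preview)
Your argument is correct and follows exactly the route the paper indicates (``follows from Theorem~\ref{Klain-Schneider} and Lemma~\ref{le:smoothness_fconvs}''), with the added detail of invoking Whitney/Schwarz to pass from a smooth radial $\zeta$ to $\varphi\in C_c^\infty([0,\infty))$ with $\zeta(y)=\varphi(|y|^2)$; this last point is implicit in the paper but you are right to flag it.
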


\subsection{Proof of Theorem \ref{hadwiger_smooth}}
First, note that for $\varphi_0,\dots, \varphi_n\in C_c^\infty([0,\infty))$, Lemma \ref{fine zeta} and Lemma~\ref{le:smoothness_fconvs} imply that the functional $\oZ\colon \fconvs\to\R$, defined by
\begin{equation*}
\oZ(u):= \sum_{j=0}^n\int_{\R^n} \varphi_j(\vert y\vert^2) \d \Psi^n_j(u, y),
\end{equation*}
is a smooth, epi-translation and rotation invariant valuation.

Conversely, let $\oZ\colon \fconvs\to \R$ be a smooth, epi-translation and rotation invariant valuation. 
For an $(n-1)$-dimensional subspace $E\subset \R^n$,  the restriction of $\oZ$ to $\fconvsE$ is a smooth, epi-translation and rotation invariant valuation on $\fconvsE$. If $\dim E=1$, then this restriction is also reflection invariant.
By induction on the dimension, starting with Proposition \ref{prop:1 smooth}, 
we obtain that for the restriction, $\oZ_E$, of $\oZ$ to $\fconvsE$ and $u\in\fconvsE$, 
$$\oZ_E(u)= \sum_{j=0}^{n-1} \int_{E} \bar \varphi_{j,E}(\vert y_E\vert^2) \d \Psi^{n-1}_j(u, y_E)$$
with suitable functions ${\bar\varphi_{j,E}}\in C_c^\infty([0,\infty))$. 
Since $\oZ$ is rotation invariant, the functions $\bar\varphi_{j,E}$ do not depend on $E$.
Thus there exist functions $\bar\varphi_j\in C_c^\infty([0,\infty)$ such that
$$\oZ(u)=\sum_{j=0}^{n-1} \int_E \bar\varphi_j(|y_E|^2) \d\Psi_j^{n-1}(u,y_E)$$
for every $(n-1)$-dimensional subspace $E\subset \R^n$ and every $u\in\fconvsE$. By Lemma~\ref{extend}, \eqref{eq:extend_abel} and the properties of the inverse Abel transform, there are functions $\varphi_j=\iAbel \bar\varphi_j\in C_c^{\infty}([0,\infty))$ such that
\begin{equation}\label{eq:ind_def}
\oZ(u)=\sum_{j=0}^{n-1} \int_{\R^n} \varphi_{j}(\vert y\vert^2) \d \Psi^{n}_j(u, y)
\end{equation}
for every $u\in\fconvs$ such that $\dom u$ is contained in some $(n-1)$-dimensional subspace of $\R^n$. The right side of \eqref{eq:ind_def} defines a smooth, epi-translation and rotation invariant valuation on $\fconvs$.

Hence,  the functional $\oZ_n\colon \fconvs\to \R$, defined by
$$\oZ_n(u):= \oZ(u)-  \sum_{j=0}^{n-1} \int_{\R^n} \varphi_j(\vert y\vert^2) \d \Psi^{n}_j(u, y),$$
is a smooth, epi-translation and rotation invariant valuation which is moreover simple. By Proposition~\ref{hadwiger_simple_smooth}, there exists a function $\varphi_n\in C_c^\infty([0,\infty))$ such that
$$\oZ_n(u)= \int_{\R^n} \varphi_n(\vert y \vert^2) \d \Psi^{n}_n(u, y)$$
for every $u\in \fconvs$. This completes the proof of the theorem.

\section{New Proof of Theorem \ref{thm:hadwiger_convex_functions}}
\label{se:proof_hadwiger}

We use the Cauchy--Kubota formulas for convex functions, which were recently established in \cite{Colesanti-Ludwig-Mussnig-6}, to  deduce the Hadwiger theorem for general valuations, Theorem \ref{thm:hadwiger_convex_functions}, from the Hadwiger theorem for smooth valuations, Theorem \ref{hadwiger_smooth}.

\goodbreak
Combining Lemma 3.4 from  \cite{Colesanti-Ludwig-Mussnig-6} and Corollary~\ref{cor:class_n-hom-rot}, we obtain the following result.

\begin{lemma}[\!\cite{Colesanti-Ludwig-Mussnig-6}]
\label{le:cauchy_kubota_is_continuous_val}
For $\,0\leq j \leq n$ and $\alpha\in C_c([0,\infty))$, the functional
\begin{equation*}
u\mapsto \int_{\Grass{j}{n}} \int_{\dom (\proj_E u)} \alpha(|\nabla_E\, \proj_E u(x_E)|) \d x_E \d E
\end{equation*}
is a continuous, epi-translation and rotation invariant valuation on $\fconvs$.
\end{lemma}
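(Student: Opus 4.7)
The plan is to combine Corollary~\ref{cor:class_n-hom-rot}, which identifies the inner integrand, with the quoted Grassmannian-integration lemma from \cite{Colesanti-Ludwig-Mussnig-6}.

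Fix $\alpha\in C_c([0,\infty))$ and $1\leq j\leq n$. For $E\in\Grass{j}{n}$, identifying $E$ orthogonally with $\R^j$, the ``if'' direction of Corollary~\ref{cor:class_n-hom-rot} (applied in dimension $j$, with reflection invariance replacing rotation invariance when $j=1$) asserts that the functional
$$Y_j^E\colon\fconvsE\to\R,\quad Y_j^E(v):=\int_{\dom v} \alpha(|\nabla_E\, v(x_E)|) \d x_E$$
is a continuous, epi-translation and $\SO(E)$-invariant valuation on $\fconvsE$. Hence the inner integral in the lemma statement equals $Y_j^E(\proj_E u)$.

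Lemma~3.4 of \cite{Colesanti-Ludwig-Mussnig-6} then takes such a $Y_j^E$ as input and packages the non-trivial step: it asserts that the Grassmannian integral
$$u\mapsto \int_{\Grass{j}{n}} Y_j^E(\proj_E u)\d E$$
inherits continuity, the valuation property, epi-translation invariance and rotation invariance from the family $\{Y_j^E\}_{E}$, producing a valuation on $\fconvs$. This is precisely the functional in the statement, so the proof concludes by applying Lemma~3.4 to the specific $Y_j^E$ produced by Corollary~\ref{cor:class_n-hom-rot}. The extreme cases $j=0$ (the functional collapses to the constant $\alpha(0)$, using that the gradient on a trivial space vanishes) and $j=n$ (the Grassmannian is a single point and one invokes Corollary~\ref{cor:class_n-hom-rot} directly) are immediate.

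The ``hard'' work is entirely absorbed into the quoted Lemma~3.4. In particular, the valuation identity of $u\mapsto Y_j^E(\proj_E u)$ cannot be read off directly from that of $Y_j^E$ on $\fconvsE$, because $\proj_E$ does \emph{not} commute with the pointwise maximum of convex functions (although it does commute with the pointwise minimum, cf.\ \eqref{eq:proj_fct_lvl_set}). Resolving this asymmetry, together with establishing continuity of $E\mapsto Y_j^E(\proj_E u)$ on the Grassmannian and the uniform boundedness needed to commute limits with the Grassmannian integral, is the main obstacle, and it is precisely what Lemma~3.4 of \cite{Colesanti-Ludwig-Mussnig-6} handles.
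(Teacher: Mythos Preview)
Your proposal is correct and follows exactly the paper's approach: the paper states the lemma with the one-line justification ``Combining Lemma 3.4 from \cite{Colesanti-Ludwig-Mussnig-6} and Corollary~\ref{cor:class_n-hom-rot}, we obtain the following result,'' and you have unpacked precisely this combination. Your additional commentary on why the valuation property does not follow trivially (since $\proj_E$ fails to commute with pointwise maxima) is a helpful elaboration of what the cited Lemma~3.4 must absorb.
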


Recall that for $t\geq 0$ we define $u_t\in\fconvs$ as
$u_t(x)\deq t|x|+\ind_{\Bn}(x)$ for $x\in\R^n$. 

\begin{lemma}
\label{le:int_gjn_alpha_ut}
For $\alpha\in C_c([0,\infty))$ and $1\leq j \leq n-1$,
$$\int_{\Grass{j}{n}} \int_{\dom(\proj_E u_t)} \alpha(|\nabla_E\, \proj_E u_t(x_E)|) \d x_E \d E = \kappa_j\,  \alpha(t)$$
for every $t\geq 0$.
\end{lemma}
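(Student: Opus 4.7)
The plan is to reduce everything to an explicit description of the projection function $\proj_E u_t$, after which the claim becomes a one-line calculation.

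First, I would compute $\proj_E u_t$ directly from the definition. For $x_E \in E$ and $z \in E^\perp$, since $E \perp E^\perp$ we have $|x_E + z|^2 = |x_E|^2 + |z|^2$, so
\[
u_t(x_E + z) = t\sqrt{|x_E|^2 + |z|^2} + \ind_{B^n}(x_E + z).
\]
If $|x_E| \leq 1$, the minimum over $z \in E^\perp$ is attained at $z = 0$ and equals $t|x_E|$; if $|x_E| > 1$, the function is $+\infty$ for every $z \in E^\perp$. Hence
\[
\proj_E u_t(x_E) = t|x_E| + \ind_{B^n \cap E}(x_E),
\]
and in particular $\dom(\proj_E u_t) = B^n \cap E$ for every $E \in \Grass{j}{n}$.

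Next, I would observe that $\proj_E u_t$ is differentiable on $(B^n \cap E)\setminus\{0\}$, which is a subset of full $j$-dimensional Lebesgue measure in $B^n \cap E$. On this set
\[
\nabla_E \proj_E u_t(x_E) = t\,\frac{x_E}{|x_E|},
\]
so $|\nabla_E \proj_E u_t(x_E)| = t$ almost everywhere on $\dom(\proj_E u_t)$. (For $t = 0$ the gradient is $0$ almost everywhere, and $\alpha(0) = \alpha(t)$, so the same conclusion holds.) Consequently the inner integral is simply
\[
\int_{\dom(\proj_E u_t)} \alpha(|\nabla_E \proj_E u_t(x_E)|) \d x_E = \alpha(t)\,\hm^j(B^n \cap E) = \kappa_j\,\alpha(t).
\]

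Finally, since the expression is independent of $E$ and $\Grass{j}{n}$ carries the $\SO(n)$-invariant probability measure, integrating over the Grassmannian yields $\kappa_j\,\alpha(t)$. The only point requiring any care is the explicit identification of $\proj_E u_t$, and this is forced by the orthogonality $E \perp E^\perp$ together with the radial symmetry of $u_t$; no serious obstacle arises.
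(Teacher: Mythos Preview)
Your proof is correct and follows essentially the same approach as the paper: both compute $\proj_E u_t(x_E)=t|x_E|+\ind_{B^n\cap E}(x_E)$ explicitly, observe that the gradient has norm $t$ almost everywhere on the unit ball in $E$, and conclude by integrating the constant $\kappa_j\,\alpha(t)$ over the Grassmannian. The only cosmetic difference is that the paper obtains the projection function via the sublevel-set identity~\eqref{eq:proj_fct_lvl_set}, whereas you carry out the minimization over $E^\perp$ directly.
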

\begin{proof}
By \eqref{eq:proj_fct_lvl_set} it is easy to see that
$$\proj_E u_t(x_E)=t |x_E|+\ind_{B_E^j}(x_E)$$
for every $E\in \Grass{j}{n}$ and $x_E\in E$, where $B_E^j$ denotes the unit ball in the $j$-dimensional subspace $E$. Thus,
$$\int_{\Grass{j}{n}} \int_{\dom(\proj_E u_t)} \alpha(|\nabla_E\, \proj_E u_t(x_E)|) \d x_E \d E = \int_{\Grass{j}{n}} \int_{B_E^j} \alpha(t) \d x_E \d E = \kappa_j\, \alpha(t)$$
for every $t\geq 0$.
\end{proof}

The authors established in \cite{Colesanti-Ludwig-Mussnig-6} the following representation of functional intrinsic volumes. 

\begin{theorem}[\!\cite{Colesanti-Ludwig-Mussnig-6}, Theorem 1.6]
\label{cauchy_function}
Let $\,0\le j<n$. If $\zeta\in\Had{j}{n}$,  then
\begin{equation*}
\oZZ{j}{\zeta}^n(u)= \frac{\kappa_n}{\kappa_j\kappa_{n-j}} \binom{n}{j} \int_{\Grass{j}{n}}  \int_{\dom (\proj_E u)} \alpha(|\nabla_E\, \proj_E u(x_E)|) \d x_E \d E
\end{equation*}
for every $u\in\fconvs$, where $\alpha\in C_c([0,\infty))$ is given by 
$$\alpha(s):=  \kappa_{n-j} \big(  s^{n-j}\zeta(s)+(n-j)\int_s^{\infty}  t^{n-j-1} \zeta(t)\d t\big)$$
for $s>0$. 
\end{theorem}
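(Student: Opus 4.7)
The plan is to show that both sides of the claimed identity are continuous, epi-translation and rotation invariant valuations on $\fconvs$ of epi-homogeneity degree $j$, and then to verify equality on the dense subclass $\fconvs \cap C_+^2(\R^n)$ by an integral-geometric reduction.

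First, I would check that $\alpha \in C_c([0,\infty))$ whenever $\zeta \in \Had{j}{n}$: compact support of $\alpha$ follows from compact support of $\zeta$, while continuity up to $s = 0^+$ uses exactly the two defining conditions of $\Had{j}{n}$, namely $\lim_{s\to 0^+}s^{n-j}\zeta(s) = 0$ and existence of $\lim_{s\to 0^+}\int_s^\infty t^{n-j-1}\zeta(t)\d t$. By Lemma \ref{le:cauchy_kubota_is_continuous_val}, the right-hand side is then a continuous, epi-translation and rotation invariant valuation on $\fconvs$, and epi-homogeneity of degree $j$ follows from $\proj_E(\lambda\sq u) = \lambda\sq\proj_E u$ (via \eqref{eq:proj_fct_lvl_set}) together with a change of variables on $E$. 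The left-hand side $\oZZ{j}{\zeta}^n$ has the same properties by construction and \eqref{eq:ozz_on_c2p}. Moreover, by continuity in both $u$ and $\zeta$, it suffices to treat $u \in \fconvs \cap C_+^2(\R^n)$ and $\zeta \in C_c^\infty([0,\infty))$, for which $\alpha(s) = -\kappa_{n-j}\int_s^\infty t^{n-j}\zeta'(t)\d t$ by an integration by parts.

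For such $u$, setting $v = u^* \in \fconvf \cap C_+^2(\R^n)$ and using $(\proj_E u)^* = v|_E$ together with \eqref{eq:ozz_on_c2p}, \eqref{eq:int_u_psi_int_v_phi}, \eqref{eq:hess_def}, and \eqref{n-gradient}, the identity reduces to
\begin{equation*}
\int_{\R^n}\zeta(|y|)[\Hess v(y)]_j\d y = \tfrac{\kappa_n}{\kappa_j\kappa_{n-j}}\binom{n}{j}\int_{\Grass{j}{n}}\int_E \alpha(|y_E|)\det\bigl(\Hess_E(v|_E)(y_E)\bigr)\d y_E\d E.
\end{equation*}
I would prove this by combining a pointwise Kubota-type identity expressing $[\Hess v(y)]_j$ as a Grassmannian average of $\det(\Hess v(y)|_E)$ with Lemma \ref{le:decompose_hessian_measure}, applied to the orthogonal decomposition along $E$ and $E^\perp$, to relate $\det(\Hess v(y)|_E)$ at a general point $y=(y_E,y_{E^\perp})$ to $\det(\Hess_E(v|_E)(y_E))$ plus corrections from the mixed $E,E^\perp$ directions; a Fubini interchange then reduces the left-hand side to an integral over $E$ against $v|_E$, with an inner integral over $y_{E^\perp}$ producing the density $\alpha$.

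The main obstacle is the explicit matching of $\alpha$ to $\zeta$. A direct polar-coordinate integration over $E^\perp$ of $\zeta(\sqrt{|y_E|^2+|y_{E^\perp}|^2})$ at fixed $s = |y_E|$ alone yields only the integral summand of $\alpha$; the boundary term $\kappa_{n-j}s^{n-j}\zeta(s)$ arises from the discrepancy between the principal minor $\Hess v(y)|_E$ at a general point $(y_E,y_{E^\perp})$ and the restricted Hessian $\Hess_E(v|_E)(y_E)$ at $(y_E,0)$, and is produced by integration by parts in the radial direction in $E^\perp$ combined with the decomposition of Lemma \ref{le:decompose_hessian_measure}. The two limiting conditions in $\Had{j}{n}$ are exactly what make the boundary contributions at $s\to 0^+$ finite, which is why they appear as hypotheses in the statement.
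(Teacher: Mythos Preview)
First, note that Theorem~\ref{cauchy_function} is not proved in this paper; it is quoted from \cite{Colesanti-Ludwig-Mussnig-6} and used as a black box in the new proof of Theorem~\ref{thm2:hadwiger_convex_functions_ck}. There is thus no proof here to compare against, so let me assess your outline on its own.

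Your reduction is sound up to the displayed integral identity for $v=u^*\in\fconvf\cap C_+^2(\R^n)$: both sides are continuous, epi-translation and rotation invariant valuations, so equality on a dense class suffices, and the dualization $(\proj_E u)^*=v|_E$ is correct. The gap is in your proposed proof of that identity. Your appeal to Lemma~\ref{le:decompose_hessian_measure} is misplaced: that lemma decomposes $\Phi_l^n(v_E+v_F,\cdot)$ only for functions that \emph{split orthogonally} as $v(x_E,x_F)=v_E(x_E)+v_F(x_F)$; it says nothing about a general $v$, and in particular gives no relation between the principal minor $\det\bigl((\Hess v(y))_E\bigr)$ at $y=(y_E,y_{E^\perp})$ and the restricted Hessian $\Hess_E(v|_E)(y_E)$, which is the same minor but evaluated at the different point $(y_E,0)$.

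More concretely, after the pointwise Kubota step the left side becomes
\[
\binom{n}{j}\int_{\Grass{j}{n}}\int_{\R^n}\zeta(|y|)\,\det\bigl((\Hess v(y))_E\bigr)\d y\d E,
\]
and your Fubini plan would need, for each fixed $E$, the inner $\R^n$-integral to match $\tfrac{\kappa_n}{\kappa_j\kappa_{n-j}}\int_E\alpha(|y_E|)\det\bigl(\Hess_E(v|_E)(y_E)\bigr)\d y_E$. But the first quantity depends on values of $v$ on all of $\R^n$, while the second depends only on $v|_E$; they are not equal for general $v$. For instance, with $n=2$, $j=1$, $E=\R e_1$ and $v(y_1,y_2)=e^{y_1}+\tfrac12 y_2^2$, the two sides become $2\pi\int_0^\infty r\zeta(r)I_0(r)\d r$ and $\pi\int_0^\infty\zeta(t)\bigl(t\cosh t+\sinh t\bigr)\d t$, which would force $2rI_0(r)=r\cosh r+\sinh r$, false already at $r=1$. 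Since the Grassmannian average has been consumed by the Kubota identity for $[\Hess v]_j$, no residual averaging is available to repair this per-$E$ discrepancy, and an integration by parts in the $E^\perp$ radial variable cannot conjure information about $v|_E$ from values of $v$ off $E$. A genuinely different mechanism is needed to pass from global data to data on $E$.
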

\noindent
Here, for $j=0$, we set $\nabla_E\proj_E u(x_E)\deq0$.

\goodbreak
The following version of the Hadwiger theorem for convex functions was established in \cite{Colesanti-Ludwig-Mussnig-6}. Using Theorem \ref{cauchy_function} and properties of the integral transform which maps $\zeta$ to $\alpha$, the authors showed in \cite{Colesanti-Ludwig-Mussnig-6} that it is equi\-valent to Theorem~\ref{thm:hadwiger_convex_functions}. Let $n\ge 2$.

\begin{theorem}[\!\cite{Colesanti-Ludwig-Mussnig-6}, Theorem 1.7]
\label{thm2:hadwiger_convex_functions_ck}
A functional $\oZ:\fconvs \to \R$ is a continuous, epi-translation and rotation invariant valuation if and only if there exist  functions $\alpha_0, \dots, \alpha_n \in C_c([0,\infty))$ such that
\begin{equation}
\label{eq:hadwiger_convex_functions_ck}
\oZ(u)=  \sum_{j=0}^{n}\int_{\Grass{j}{n}}  \int_{\dom (\proj_E u)} \alpha_j(|\nabla_E\, \proj_E u(x_E)|) \d x_E \d E 
\end{equation}
for every $u\in\fconvs$. 
\end{theorem}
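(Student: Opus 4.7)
The approach is to combine Knoerr's Hadwiger theorem for smooth valuations (Theorem~\ref{hadwiger_smooth}) with the Cauchy--Kubota formulas (Theorem~\ref{cauchy_function}), passing to a general continuous valuation via smooth approximation. The ``if'' direction follows immediately from Lemma~\ref{le:cauchy_kubota_is_continuous_val}. For the ``only if'' direction, I would first apply Theorem~\ref{thm:mcmullen_cvx_functions} to decompose $\oZ = \sum_{j=0}^n \oZ_j$, where each $\oZ_j$ is continuous, epi-translation and rotation invariant (the latter inherited by canonicity of the decomposition), and epi-homogeneous of degree $j$. The boundary cases $j=0$ and $j=n$ are handled by Theorem~\ref{thm:class_0-hom} and Corollary~\ref{cor:class_n-hom-rot}, respectively, producing the $j=0$ and $j=n$ summands of the Cauchy--Kubota form (for $j=0$, choose $\alpha_0 \in C_c([0,\infty))$ so that $\alpha_0(0)$ equals the constant value of $\oZ_0$).

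For $1 \le j \le n-1$, apply Lemma~\ref{le:seq_smooth_vals} to obtain a sequence of smooth, epi-translation and rotation invariant valuations $\oZ_j^m$, each epi-homogeneous of degree $j$, with $\oZ_j^m \to \oZ_j$. By Theorem~\ref{hadwiger_smooth} together with uniqueness of the homogeneous decomposition (using that $\oZZ{k}{\cdot}^n$ is epi-homogeneous of degree $k$), we have $\oZ_j^m = \oZZ{j}{\zeta_j^m}^n$ with $\zeta_j^m \in C_c^\infty([0,\infty)) \subset \Had{j}{n}$; Theorem~\ref{cauchy_function} then rewrites $\oZ_j^m$ in Cauchy--Kubota form with density $\alpha_j^m \in C_c^\infty([0,\infty))$. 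Evaluating on $u_t(x) = t|x| + \ind_{\Bn}(x)$, Lemma~\ref{le:int_gjn_alpha_ut} yields $\oZ_j^m(u_t) = \kappa_j \alpha_j^m(t)$. Since $u_t$ epi-converges to $\ind_{\{0\}}$ as $t \to \infty$ and $\oZ_j(\ind_{\{0\}}) = 0$ by degree-$j$ homogeneity (with $j \ge 1$), the set $\{u_t \colon t \in [0,\infty]\}$ (extended by $u_\infty \deq \ind_{\{0\}}$) is compact in $\fconvs$. Lemma~\ref{le:smooth_vals_conv_unif_on_c}, applied to this compact family, gives $\alpha_j^m \to \alpha_j$ uniformly on $[0,\infty)$, where $\alpha_j(t) \deq \oZ_j(u_t)/\kappa_j$ is continuous and vanishes at infinity.

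The main obstacle is to upgrade $\alpha_j \in C_0([0,\infty))$ to $\alpha_j \in C_c([0,\infty))$. I would address this via the explicit inversion of the transform $\zeta \mapsto \alpha$ from Theorem~\ref{cauchy_function}: an integration by parts gives $\zeta_j^m(t) = \alpha_j^m(t)/(\kappa_{n-j} t^{n-j}) - (n-j)\int_t^\infty \alpha_j^m(s)/(\kappa_{n-j} s^{n-j+1}) \d s$ for $t>0$. Using the uniform convergence of $\alpha_j^m$ and dominated convergence (with the tail dominant $M/s^{n-j+1}$ coming from the uniform bound on $\alpha_j^m$), the functions $\zeta_j^m$ converge pointwise on $(0,\infty)$ to a candidate $\zeta_j$. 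The structural constraint that $\zeta_j$ should in fact lie in $\Had{j}{n}$—in particular have bounded support on $(0,\infty)$—is the delicate step, which I expect to extract from the continuity and finiteness of $\oZ_j$ on all of $\fconvs$, combined with Lemma~\ref{le:reduction} applied to the difference between $\oZ_j$ and $\oZZ{j}{\zeta_j}^n$. Once $\zeta_j \in \Had{j}{n}$ is secured, compact support of $\alpha_j$ follows automatically from Theorem~\ref{cauchy_function}; the corresponding Cauchy--Kubota valuation is then continuous, epi-translation and rotation invariant by Lemma~\ref{le:cauchy_kubota_is_continuous_val}, and agrees with $\oZ_j$ as the common limit of $\oZ_j^m$. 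Summation over $j$ yields the desired representation of $\oZ$.
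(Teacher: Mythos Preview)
Your overall strategy matches the paper's (homogeneous decomposition, smooth approximation, Cauchy--Kubota rewriting, extraction of $\alpha_j$ via $u_t$), but the upgrade from $\alpha_j \in C_0([0,\infty))$ to $\alpha_j \in C_c([0,\infty))$ is a genuine gap. Your proposed detour through the inversion formula is circular: to invoke Lemma~\ref{le:reduction} on $\oZ_j - \oZZ{j}{\zeta_j}^n$ you first need $\zeta_j \in \Had{j}{n}$ so that $\oZZ{j}{\zeta_j}^n$ is defined at all, yet bounded support of $\zeta_j$ is exactly what must be shown, and the inversion applied to a merely $C_0$ density $\alpha_j$ does not produce a $\zeta_j$ with bounded support. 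The paper bypasses $\zeta_j$ entirely with a direct contradiction: assuming $\alpha_j(t_k) > 0$ along some $t_k \to \infty$, set $w_k \deq \alpha_j(t_k)^{-1/j} \sq u_{t_k}$; by epi-homogeneity $\oZ_j(w_k) = \alpha_j(t_k)^{-1}\oZ_j(u_{t_k})=\kappa_j$ for every $k$, while $w_k$ epi-converges to $\ind_{\{0\}}$, so continuity forces $\oZ_j(\ind_{\{0\}}) = \kappa_j \ne 0$, contradicting degree-$j$ homogeneity.

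The final identification step is also too quick. Uniform convergence $\alpha_j^m \to \alpha_j$ on $[0,\infty)$ does not ensure that the Cauchy--Kubota integrals converge for a general $u \in \fconvs$: when $\dom u$ is unbounded the inner integral runs over all of $E$, and since the supports of the $\alpha_j^m$ are not uniformly bounded, the error $\|\alpha_j^m - \alpha_j\|_\infty$ multiplies an a priori uncontrolled volume. The paper uses Lemma~\ref{le:reduction} at this stage (not for the $\zeta_j$ issue): it reduces to $u = h_P + \ind_Q$ with $P,Q \in \cP_{(0)}^n$, where $\dom \proj_E u = \proj_E Q$ is compact and, by Lemma~\ref{le:proj_subgradient}, $|\nabla_E \proj_E u| \le t_0$ for some $t_0$ independent of $E$; uniform convergence of $\alpha_j^m$ on $[0,t_0]$ together with dominated convergence over $\Grass{j}{n}$ then yields the claim.
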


\subsection{New Proof of Theorem \ref{thm2:hadwiger_convex_functions_ck}}

For given $\alpha_0,\ldots,\alpha_n\in C_c([0,\infty))$, it follows from Lemma~\ref{le:cauchy_kubota_is_continuous_val} that the right side of \eqref{eq:hadwiger_convex_functions_ck} defines a continuous, epi-translation and rotation invariant valuation on $\fconvs$.

Conversely, let a continuous, epi-translation and rotation invariant valuation $\oZ\colon\fconvs\to\R$ be given. 
By Theorem \ref{thm:mcmullen_cvx_functions}, Theorem \ref{thm:class_0-hom} and Corollary \ref{cor:class_n-hom-rot}, it is enough to consider the case that $\oZ$ is epi-homogeneous of degree $j$ with $1\leq j \leq n-1$. 
By Lemma~\ref{le:seq_smooth_vals}, there exists a sequence of smooth, epi-translation and rotation invariant valuations $\oZ_m\colon\fconvs\to\R$, epi-homogeneous of degree $j$, that converges to $\oZ$ as $m\to \infty$. 
By Theorem~\ref{hadwiger_smooth} and Theorem~\ref{cauchy_function}, there exists $\alpha_m\in C_c^{\infty}([0,\infty))$ such that
$$\oZ_m(u)=\int_{\Grass{j}{n}} \int_{\dom(\proj_E u)} \alpha_m(|\nabla \proj_E u(x_E)|) \d x_E \d E$$
for every $u\in\fconvs$. 
It follows from Lemma~\ref{le:int_gjn_alpha_ut} that
\begin{equation}
\label{eq:z_i_alpha_i}
\oZ_m(u_t)=\kappa_j\, \alpha_m(t)
\end{equation}
for every $m\ge1$ and $t\geq 0$, where $u_t(x)=t|x|+\ind_{\Bn}(x)$
for $x\in\R^n$.
Define $\alpha\colon[0,\infty)\to \R$ by
$$\oZ(u_t)=: \kappa_j \alpha(t)$$ 
for $t\ge0$. By Lemma~\ref{le:smooth_vals_conv_unif_on_c}, the sequence $\oZ_m$ converges uniformly to $\oZ$ on $\{u_t\colon t \in C\}$ as $m\to \infty$ for any compact set $C\subset [0,\infty)$. 
Thus, also the sequence $\alpha_m$ converges uniformly to $\alpha$ on any compact subset of $[0,\infty)$, which implies that $\alpha\in C([0,\infty))$.

Next, we  show that $\alpha$ has compact support. Assume on the contrary that there exists a sequence $t_k\in[0,\infty)$ such that $t_k\to\infty$ as $k\to\infty$ and $\alpha(t_k)\neq 0$ for every $k\ge 1$. Without loss of generality, we may assume that $\alpha(t_k)>0$ for every $k\ge 1$. Define $w_k\in\fconvs$ as

$$w_k(x):= \Big(\frac1{\alpha(t_k)^{1/j}}\sq u_{t_k}\Big)(x)$$
for $x\in\R^n$. By (\ref{eq:z_i_alpha_i}) and the epi-homogeneity of $\oZ_m$,
$$\oZ_m(w_k)=\kappa_j \frac{\alpha_m(t_k)}{\alpha(t_k)}$$
for every $k, m\ge 1$. Observe that 
$$w_k(x)=t_k|x|+\ind_{(1/\alpha(t_k))^{1/j} \Bn}(x)$$
for $x\in\R^n$ and thus $w_k$ epi-converges to $\ind_{\{0\}}$ as $k\to\infty$. By the continuity of $\oZ_m$, we obtain that $\oZ_m(w_k)\to 0$ as $k\to\infty$ for every $m\ge 1$ since $\oZ_m(\ind_{\{0\}})=0$. Hence,
\begin{align*}
0=\lim_{m\to\infty} \oZ_m(\ind_{\{0\}})=\oZ(\ind_{\{0\}})=\lim_{k\to\infty} \oZ(w_k) = \lim_{k\to\infty} \lim_{m\to\infty} \oZ_m(w_k) = \lim_{k\to\infty} \lim_{m\to\infty} \kappa_j \frac{\alpha_m(t_k)}{\alpha(t_k)} = \kappa_j,
\end{align*}
which is a contradiction. Thus, we conclude that $\alpha\in C_c([0,\infty))$.

Finally, we show that the valuation $\oZ$ can be represented as in the statement of the theorem, that is,
\begin{equation}\label{representation}
\oZ(u)= \int_{\Grass{j}{n}}  \int_{\dom (\proj_E u)} \alpha(|\nabla_E\,\proj_E u(x_E)|) \d x_E \d E
\end{equation}
for $u\in\fconvs$. Lemma~\ref{le:cauchy_kubota_is_continuous_val} implies that the right side of \eqref{representation} defines a continuous, epi-translation and rotation invariant valuation on $\fconvs$. Hence, by Lemma~\ref{le:reduction} it suffices to show that $\oZ$ and the right side of \eqref{representation} coincide on functions of the form $h_P+\ind_Q$ with $P,Q\in\cP_{(0)}^n$. To show this, fix polytopes $P,Q\in\cP_{(0)}^n$. Note that $\dom \proj_E(h_P+\ind_Q)=\proj_E Q$ for $E\in\Grass{j}{n}$.
Since support functions are Lipschitz, there exists $t_0\ge 0$ such that $\vert y\vert \le t_0$ for every $y\in\partial(h_P+\ind_Q)(x)$ for all $x\in\interno Q$, where $\interno$ stands for interior.  Lemma~\ref{le:proj_subgradient} implies that $\vert y_E\vert \le t_0$ for every $E\in\Grass{j}{n}$ and $y_E\in \partial \proj_E(h_P+\ind_Q)(x_E)$ for all $x_E\in \proj_E\interno Q$. Hence
\begin{equation}
\label{eq:nabla_proj_bd}
|\nabla_E\, \proj_E(h_P+\ind_Q)(x_E)|\leq t_0
\end{equation}
for every $E\in\Grass{j}{n}$ and a.e.\! $x_E\in\proj_E Q$.
Since $\alpha_m$ converges uniformly to $\alpha$ on $[0,t_0]$,
it follows from \eqref{eq:nabla_proj_bd} that
$$
\lim_{m\to\infty} \int_{\proj_E Q} \alpha_m(|\nabla_E \proj_E(h_P+\ind_Q)(x_E)|) \d x_E = \int_{\proj_E Q} \alpha(|\nabla_E \proj_E(h_P+\ind_Q)(x_E)|) \d x_E
$$
for  $E\in \Grass{j}{n}$. Since $\alpha_m$ is uniformly bounded on $[0,t_0]$, the space $\Grass{j}{n}$ is compact and the function $E\mapsto V_j(\proj_E Q)$ is continuous, there is $\gamma>0$ such that
$$\Big \vert\int_{\proj_E Q} \alpha_{m}(|\nabla_E\, \proj_E(h_P+\ind_Q)(x_E)|) \d x_E \Big\vert \leq \gamma \max_{E\in \Grass{j}{n}} V_j(\proj_E Q)$$
for every $E\in \Grass{j}{n}$. Hence, we may apply the dominated convergence theorem 
to obtain that
\begin{align*}
\oZ(h_P+\ind_Q)&=\lim_{m\to\infty} \oZ_m(h_P+\ind_Q)\\
&=\lim_{m\to\infty} \int_{\Grass{j}{n}} \int_{\proj_E Q} \alpha_m(|\nabla_E\, \proj_E(h_P+\ind_Q)(x_E)|) \d x_E \d E\\
&= \int_{\Grass{j}{n}} \int_{\proj_E Q} \alpha(|\nabla_E\, \proj_E(h_P+\ind_Q)(x_E)|) \d x_E \d E,
\end{align*}
which concludes the proof.

\section{Functions with Lower Dimensional Domain and Klain's Proof} 
\label{se:lower}

We discuss the extension of valuations on functions with lower dimensional domain to valuations on general functions. We show that the Abel transform plays a critical role and explain why Klain's approach \cite{Klain95} to Hadwiger's theorem does not work in general in the functional setting.

\subsection{Further Properties of the Abel Transform}
The following lemma and Proposition~\ref{prop:hessian_vals_on_lower_dim_functions} show that the Abel transform, which was defined in Section~\ref{se:abel}, maps $\Had{j}{n}$ to $\Had{j}{n-1}$.

\begin{lemma}
\label{le:abel_hadwiger_class}
Let $\,0\leq j \leq n-2$. If $\zeta\in\Had{j}{n}$, then $\Abel \zeta\in \Had{j}{n-1}$.
\end{lemma}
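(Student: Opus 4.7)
My plan is to verify directly the three defining properties of $\Had{j}{n-1}$ from the integral representation of the Abel transform. The inclusion $\Abel\zeta\in C_b((0,\infty))$ is easy: if $\zeta$ vanishes outside $[0,R]$, then so does $\Abel\zeta$, and continuity on $(0,\infty)$ follows from the formula
$$
\Abel\zeta(t) = 2\int_t^\infty \frac{u\,\zeta(u)}{\sqrt{u^2-t^2}}\d u
$$
(obtained via $u=\sqrt{s^2+t^2}$) together with dominated convergence, using that $\zeta$ is bounded on compact subsets of $(0,\infty)$.

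For the condition $\lim_{s\to 0^+} s^{n-j-1}\Abel\zeta(s)=0$, I would split the above integral at $u=2s$. The hypothesis $s^{n-j}\zeta(s)\to 0$ yields $|\zeta(u)|\le \varepsilon u^{-(n-j)}$ for $u$ small, and combined with the substitution $u=s\cosh r$ (so that $\d u/\sqrt{u^2-s^2}=\d r$) this bounds the near-diagonal piece on $[s,2s]$ by a constant multiple of $\varepsilon$. On $[2s,R]$, where $\sqrt{u^2-s^2}\ge u\sqrt{3}/2$, the task reduces to showing $s^{n-j-1}\int_{2s}^R|\zeta(u)|\d u\to 0$; one splits the integration at $\delta$ below which $|\zeta(u)|\le \varepsilon u^{-(n-j)}$ and uses $\int_{2s}^\delta u^{-(n-j)}\d u\le C s^{-(n-j-1)}$, which crucially requires $n-j-1>0$, i.e.\ $j\le n-2$.

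The main obstacle is the third condition, existence of $\lim_{s\to 0^+}\int_s^\infty t^{n-j-2}\Abel\zeta(t)\d t$, because the hypothesis on $\zeta$ supplies only an improper (not absolutely convergent) integral $\int_0^\infty t^{n-j-1}\zeta(t)\d t$, so ordinary dominated convergence is unavailable. For each fixed $s>0$ Fubini applies (the integrand is bounded on the bounded region $\{s\le t\le R,\,0\le r\le \sqrt{R^2-t^2}\}$), and polar coordinates $t=\rho\cos\phi$, $r=\rho\sin\phi$ transform the integral into
$$
\int_s^\infty t^{n-j-2}\Abel\zeta(t)\d t = 2\int_s^\infty \rho^{n-j-1}\zeta(\rho)\,J(s,\rho)\d\rho,
$$
where $J(s,\rho):=\int_0^{\arccos(s/\rho)}\cos^{n-j-2}\phi\d\phi$. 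Setting $c:=\int_0^{\pi/2}\cos^{n-j-2}\phi\d\phi$ and decomposing $J=c+(J-c)$, the main piece converges by hypothesis to $2c\lim_{s\to 0^+}\int_s^\infty \rho^{n-j-1}\zeta(\rho)\d\rho$, while the bounds $\sin\psi\le\psi$ and $\arcsin(x)\le(\pi/2)x$ on $[0,1]$ (the latter from convexity of $\arcsin$) give $|J(s,\rho)-c|\le C(s/\rho)^{n-j-1}$ for $\rho\ge s$. This dominates the error piece by $Cs^{n-j-1}\int_s^\infty|\zeta(\rho)|\d\rho$, which tends to zero by exactly the elementary estimate already established in step two, completing the verification.
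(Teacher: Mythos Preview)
Your proof is correct and follows the same overall plan as the paper---verify the three defining conditions of $\Had{j}{n-1}$ directly---but the technical execution differs in the choice of substitution. For the decay condition $s^{n-j-1}\Abel\zeta(s)\to 0$, the paper writes $\Abel\zeta(s)=2s\int_0^\infty \zeta(s\cosh r)\cosh r\d r$ and splits at $r=\acosh(\delta/s)$; your splitting at $u=2s$ and then at $u=\delta$ accomplishes the same thing in the $u$-variable. For the integral condition the paper's route is a bit more direct: substituting $z=t\cosh r$ and swapping the order of integration gives
\[
\int_s^\infty t^{n-j-2}\Abel\zeta(t)\d t \;=\; 2\int_0^\infty \frac{1}{\cosh^{n-j-1}(r)}\int_{s\cosh r}^\infty z^{n-j-1}\zeta(z)\d z\d r,
\]
and now the uniform bound on the inner integral (which is precisely the hypothesis on $\zeta$) feeds straight into dominated convergence---no separate error estimate is required. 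Your polar-coordinate decomposition $J(s,\rho)=c+(J(s,\rho)-c)$ reaches the same conclusion but needs the additional bound $|J-c|\le C(s/\rho)^{n-j-1}$ and then recycles the estimate from your step two. Both arguments are valid; the hyperbolic substitution simply packages the limit step more compactly, while your approach has the minor advantage of treating all $0\le j\le n-2$ uniformly without singling out $j=0$.
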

\begin{proof}
It is easy to see that the condition $\zeta\in C_b((0,\infty))$ implies that also $\Abel \zeta\in C_b((0,\infty))$. Since $\zeta\in\Had{j}{n}$, there exist $\beta\ge 0$ such that
\begin{equation}
\label{eq:int_zeta_moment_bd}
\Big\vert\int_s^{\infty}t^{n-j-1}\zeta(t)\d t\Big\vert \le \beta
\end{equation}
for every $s>0$ and  $\gamma\deq\lim_{s\to 0^+}\int_s^{\infty}t^{n-j-1}\zeta(t)\d t$ exists and is finite. Using \eqref{eq:def_abel_trans2} and the substitution $z=t\cosh(r)$, we obtain
\begin{align*}
\lim_{s\to 0^+} \int_s^{\infty} t^{n-j-2}\Abel \zeta(t)  \d t&= 2 \lim_{s\to 0^+} \int_s^{\infty} \int_{0}^{\infty} t^{n-j-1}\zeta(t \cosh(r)) \cosh(r)  \d r \d t\\
&=2 \lim_{s\to 0^+} \int_0^{\infty} \frac{1}{\cosh^{n-j-1}(r)} \int_{s \cosh(r)}^{\infty} z^{n-j-1}\zeta(z)  \d z \d r\\
&= 2 \int_0^{\infty} \frac{\gamma}{\cosh^{n-j-1}(r)} \d r
\end{align*}
where we have used \eqref{eq:int_zeta_moment_bd} and the dominated convergence theorem in the last step. Using that
\begin{equation}
\label{eq:cosh}
\int_0^{\infty} \frac1{\cosh^{n-j-1}(r)} \d r \le \int_0^{\infty} \frac1{\cosh(r)} \d r =\frac{\pi}2,
\end{equation}
we obtain that $\lim_{s\to 0^+} \int_s^{\infty} t^{n-j-2} \Abel \zeta(t) \d t$ exists and is finite. Note that for $j=0$ we have thus shown that $\Abel \zeta\in\Had{0}{n-1}$.

\goodbreak
Next, let  $1\leq j \leq n-2$ and $\varepsilon>0$. Since $\zeta\in\Had{j}{n}$, there exists $\delta>0$ such that $|r^{n-j} \zeta(r)|<\varepsilon$ for every $r\in(0,\delta)$. Thus,
\begin{align*}
\Big|s^{n-j} \int_0^{\acosh(\delta/s)} \cosh(r) \zeta(s\cosh(r))\d r \Big| &\leq  \int_0^{\acosh(\delta/s)} \big|(s \cosh(r))^{n-j} \zeta(s \cosh(r))\big| \frac{1}{\cosh^{n-j-1}(r)} \d r\\
&\leq \varepsilon \int_0^{\infty} \frac{1}{\cosh^{n-j-1}(r)} \d r.
\end{align*}
Therefore
\begin{align*}
\left|s^{n-j-1} \Abel\zeta(s)\right| &= \Big|2 s^{n-j} \int_0^{\infty} \cosh(r) \zeta(s \cosh(r)) \d r\Big|\\
&\leq 2 \varepsilon \int_0^{\infty} \frac{1}{\cosh^{n-j-1}(r)} \d r + 2 s^{n-j} \int_{\acosh(\delta/s)}^{\infty} \cosh(r) |\zeta(s \cosh(r))| \d r\\
&=2 \varepsilon \int_0^{\infty} \frac{1}{\cosh^{n-j-1}(r)} \d r + 2 s^{n-j-1} \int_{\delta}^{\infty} \big|\zeta(\sqrt{s^2+t^2})\big| \d t.
\end{align*}
Since $\zeta$ is continuous with bounded support and $\delta>0$, the last inequality implies that 
$$\limsup_{s\to 0^+} |s^{n-j-1} \Abel \zeta(s)| \leq 2 \varepsilon \int_0^{\infty} \frac1{\cosh^{n-j-1}(r)} \d r.$$ 
Using \eqref{eq:cosh} again,
we obtain  $\lim_{s\to 0^+} s^{n-j-1} \Abel \zeta(s) = 0$ and have shown that $\Abel \zeta\in \Had{j}{n-1}$.
\end{proof}

We will see in Proposition~\ref{prop:hessian_vals_on_lower_dim_functions} that the  statement from Lemma \ref{le:abel_hadwiger_class} is also true for $j=n-1$.

\subsection{Functions with Lower Dimensional Domains}
Let $\fconvfs$ denote the set of functions in $\fconvf$ that are of class $C^2$ in a neighborhood of the origin. It is easy to see that $\fconvfs$ is dense in $\fconvf$.
We need the following three lemmas.

\begin{lemma}[\!\cite{Colesanti-Ludwig-Mussnig-5}, Lemma 3.1]\label{lem:existence_v_smooth_origin} Let $\,1\leq j \leq n$ and $\zeta\in \Had{j}{n}$. If $\,v\in\fconvfs$, then
\begin{equation*}
\int_{\R^n}\big\vert\zeta(|x|)\big\vert \d\Phi^n_{j}(v,x)
\end{equation*}
is well-defined and finite.
\end{lemma}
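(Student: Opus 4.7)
The plan is to exploit the bounded support of $\zeta$ together with the $C^2$ regularity of $v$ near the origin, splitting the domain of integration into a small ball around the origin and its complement, and controlling each piece separately.

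Since $\zeta\in C_b((0,\infty))$, it has bounded support, say $\operatorname{supp}\zeta\subseteq[0,M]$, so that $\zeta(|x|)$ vanishes for $x\notin M\Bn$. Choose $\delta\in(0,M)$ small enough that $v$ is $C^2$ on an open neighborhood of $\delta\Bn$. Then $[\Hess v(x)]_j$ is bounded on $\delta\Bn$ by some constant $C>0$, and, by the locality of the Hessian measure, $\Phi^n_j(v,\cdot)$ agrees on $\delta\Bn$ with the absolutely continuous measure of density $[\Hess v]_j$. Passing to polar coordinates,
$$
\int_{\delta\Bn} |\zeta(|x|)|\d\Phi^n_j(v,x)\leq C\int_{\delta\Bn} |\zeta(|x|)|\d x= C\,n\kappa_n\int_0^\delta |\zeta(r)|\,r^{n-1}\d r.
$$
If $j=n$, then $\zeta$ extends continuously to $0$ by definition of $\Had{n}{n}$, so the right-hand side is finite. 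For $1\leq j\leq n-1$, the condition $\lim_{s\to 0^+} s^{n-j}\zeta(s)=0$ yields $|\zeta(r)|\,r^{n-1}=O(r^{j-1})$ as $r\to 0^+$, which is integrable over $(0,\delta)$ because $j\geq 1$.

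On the annulus $M\Bn\setminus\delta\Bn$, the function $\zeta$ is continuous and hence bounded on $[\delta,M]$, while $\Phi^n_j(v,\cdot)$ is a Radon measure and therefore finite on this compact set. Combining these two estimates gives finiteness of $\int_{\R^n}|\zeta(|x|)|\d\Phi^n_j(v,x)$, so the integral is well-defined.

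The step that requires the most care is the locality property invoked above: we need $\d\Phi^n_j(v,x)=[\Hess v(x)]_j\,\d x$ on a neighborhood of the origin even though the defining identity \eqref{eq:hess_def} is stated only for globally $C_+^2$ functions. This can be justified via a smooth approximation of $v$ that agrees with it on $\delta\Bn$—adding a small quadratic perturbation if necessary to ensure strict positive definiteness of the Hessian—together with the weak continuity of the Hessian measures. The remainder of the argument is essentially a matching of the Hadwiger integrability condition at the origin with the polar-coordinate Jacobian.
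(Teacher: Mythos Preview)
The paper does not supply its own proof of this lemma; it is quoted verbatim from \cite{Colesanti-Ludwig-Mussnig-5}, so there is no in-paper argument to compare against. Your proof is correct in substance and is the natural one: split at a small radius, use the $C^2$ regularity near the origin to reduce to $\int_0^\delta |\zeta(r)|\,r^{n-1}\d r$, and observe that the growth bound $s^{n-j}\zeta(s)\to 0$ alone already forces $|\zeta(r)|\,r^{n-1}=O(r^{j-1})$, which is integrable for $j\ge 1$. The outer annulus is handled by boundedness of $\zeta$ and local finiteness of $\Phi_j^n(v,\cdot)$.

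The only point that deserves a cleaner justification is the locality step. Your sketch proposes an approximation that \emph{agrees with $v$ on $\delta\Bn$} and is simultaneously made strictly convex by adding a quadratic term; these two requirements are in tension, since the quadratic perturbation destroys exact agreement. The standard route avoids this: mollify $v$ to obtain $v_k\in C^\infty\cap\fconvf$, set $w_k:=v_k+\tfrac{1}{k}\,|\cdot|^2\in C_+^2(\R^n)$, and note that $w_k\to v$ locally uniformly. Since $v$ is $C^2$ on a neighborhood of $\delta\Bn$, one has $\Hess w_k\to\Hess v$ uniformly there, hence $[\Hess w_k]_j\to[\Hess v]_j$ uniformly; weak continuity of $\Phi_j^n$ then yields $\d\Phi_j^n(v,x)=[\Hess v(x)]_j\,\d x$ on that neighborhood, exactly as you need. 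With this adjustment your argument is complete.
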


\begin{lemma}[\!\cite{Colesanti-Ludwig-Mussnig-5}, Lemma 3.23]
\label{le:dual_hessian_vals_on_fconvfs}
If $\,0\leq j \leq n$ and $\zeta\in\Had{j}{n}$, then
$$\oZZ{j}{\zeta}^{n,*}(v)=\int_{\R^n} \zeta(|x|) \d\Phi_j^n(v,x)$$
for every $v\in\fconvfs$.
\end{lemma}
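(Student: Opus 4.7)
The strategy is to approximate $v\in\fconvfs$ by functions in $\fconvf\cap C_+^2(\R^n)$, establish the identity on this smooth class from the explicit defining formula of $\oZZ{j}{\zeta}^n$, and then pass to the limit. The essential new difficulty is that $\zeta(|\cdot|)$ may blow up at the origin, so pure weak convergence of Hessian measures is insufficient; the hypothesis $v\in\fconvfs$ (that is, $v$ is $C^2$ in a neighborhood of $0$) will be used to dominate the near-origin contribution by an integrable density.

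\emph{Identity on the smooth class.} For $v\in\fconvf\cap C_+^2(\R^n)$ one has $u\deq v^*\in\fconvs\cap C_+^2(\R^n)$. By the definition of $\oZZ{j}{\zeta}^n$ on $\fconvs\cap C_+^2(\R^n)$ applied to $u$, together with \eqref{eq:def_psi_j_n} (used for $\beta(y)=\zeta_{\pm}(|y|)$ after splitting $\zeta$ into positive and negative parts) and the identity $\Psi_j^n(u,\cdot)=\Phi_j^n(v,\cdot)$ from \eqref{eq:int_u_psi_int_v_phi}, one gets
\begin{equation*}
\oZZ{j}{\zeta}^{n,*}(v)=\int_{\R^n}\zeta(|\nabla u(x)|)[\Hess u(x)]_{n-j}\d x=\int_{\R^n}\zeta(|y|)\d\Psi_j^n(u,y)=\int_{\R^n}\zeta(|x|)\d\Phi_j^n(v,x).
\end{equation*}
All integrals are finite: $\nabla u$ is a $C^1$ diffeomorphism of $\R^n$, and since $v\in C_+^2$ around $\nabla u^{-1}(\{0\})=\{\nabla v(0)\}$, the Hessian measure has a bounded density near the one point where $\zeta$ could be singular.

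\emph{Approximation and LHS.} Choose $\epsilon>0$ with $v\in C^2(\bar B_{2\epsilon})$ and set $v_k\deq v*\phi_{1/k}+\tfrac{1}{k^2}|\cdot|^2$ with $\phi_{1/k}$ a standard radial mollifier. Each $v_k\in\fconvf\cap C_+^2(\R^n)$, the sequence $v_k$ converges to $v$ uniformly on compact sets (hence epi-converges in $\fconvf$), and on $\bar B_\epsilon$ the convergence is even in $C^2$. The previous step gives the identity for each $v_k$. Since $\oZZ{j}{\zeta}^n$ is continuous on $\fconvs$ and $v_k^*$ epi-converges to $v^*$, the left-hand side passes to the limit: $\oZZ{j}{\zeta}^{n,*}(v_k)\to\oZZ{j}{\zeta}^{n,*}(v)$.

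\emph{RHS: splitting near the origin.} Fix $\eta\in C^\infty(\R^n)$ equal to $1$ outside $B_\epsilon$ and to $0$ inside $B_{\epsilon/2}$, and split
\begin{equation*}
\int_{\R^n}\zeta(|x|)\d\Phi_j^n(v_k,x)=\int_{\R^n}\zeta(|x|)\eta(x)\d\Phi_j^n(v_k,x)+\int_{B_\epsilon}\zeta(|x|)(1-\eta(x))[\Hess v_k(x)]_j\d x,
\end{equation*}
where in the second term we used that $v_k\in C_+^2(\R^n)$ and \eqref{eq:hess_def}. The first integrand lies in $C_c(\R^n)$ (vanishing near $0$ by $\eta$ and for large $|x|$ by $\zeta$), so weak continuity of $v\mapsto\Phi_j^n(v,\cdot)$ (dual to the continuity of $\Psi_j^n$ stated in Section \ref{se:hessian} via \eqref{eq:int_u_psi_int_v_phi}) sends it to the expected limit. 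In the second integral, $[\Hess v_k]_j\to[\Hess v]_j$ uniformly on $\bar B_\epsilon$, and for $1\le j\le n-1$ the condition $\lim_{s\to 0^+}s^{n-j}\zeta(s)=0$ implies $r^{n-1}|\zeta(r)|=o(r^{j-1})$, an integrable majorant near $0$; dominated convergence then gives convergence, and by the $C^2$ representation \eqref{eq:hess_def} on $B_\epsilon$ the limit equals $\int\zeta(|x|)(1-\eta(x))\d\Phi_j^n(v,x)$. The cases $j=0$ (where $\oZZ{0}{\zeta}^{n,*}$ is constant and $\Phi_0^n(v,\cdot)$ is Lebesgue measure, so the claim follows from the second moment condition in the definition of $\Had{0}{n}$) and $j=n$ (where $\zeta$ extends continuously to $0$, so $\zeta(|\cdot|)$ itself is a valid bounded test function) are simpler variants of the same argument.

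\emph{Main obstacle.} The only delicate point is the near-origin piece: weak convergence of $\Phi_j^n(v_k,\cdot)$ against the unbounded test function $\zeta(|\cdot|)$ cannot be invoked directly. The role of the assumption $v\in\fconvfs$ is precisely to replace the failing weak-convergence argument near $0$ by a dominated-convergence argument based on the smooth density $[\Hess v]_j$ and the integrability bound coming from the Hadwiger-class moment condition on $\zeta$.
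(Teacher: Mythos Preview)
The present paper does not prove this lemma; it is quoted from \cite{Colesanti-Ludwig-Mussnig-5} without argument, so there is no proof here to compare against. Your proposal is essentially correct and is indeed the natural strategy, but two points deserve tightening.

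First, the claim ``for $v\in\fconvf\cap C_+^2(\R^n)$ one has $u=v^*\in\fconvs\cap C_+^2(\R^n)$'' is false as stated: take $v(x)=\sqrt{1+|x|^2}$, which is in $\fconvf\cap C_+^2(\R^n)$ but whose conjugate has domain the closed unit ball. What the paper asserts is that $u\in\fconvs\cap C_+^2(\R^n)$ if and only if $u^*\in\fconvs\cap C_+^2(\R^n)$, so you also need $v$ super-coercive. This does not damage your argument, because the approximants you actually use, $v_k=v*\phi_{1/k}+k^{-2}|\cdot|^2$, \emph{are} super-coercive (the quadratic term forces it), hence $v_k\in\fconvs\cap C_+^2(\R^n)$ and the defining formula \eqref{eq:ozz_on_c2p} applies to $v_k^*$. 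Just restrict the first step to that class.

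Second, when you pass from $\int_{B_\epsilon}\zeta(|x|)(1-\eta(x))[\Hess v(x)]_j\d x$ to $\int\zeta(|x|)(1-\eta(x))\d\Phi_j^n(v,x)$ for the limit function $v$, you are invoking that $\Phi_j^n(v,\cdot)$ has Lebesgue density $[\Hess v]_j$ on $B_\epsilon$. Equation \eqref{eq:hess_def} is stated only for globally $C_+^2$ functions, so this step requires the locality of Hessian measures (if two convex functions agree on an open set, their Hessian measures agree there). This is standard and follows, for instance, by approximating $v$ with functions that agree with $v$ on $B_\epsilon$ and are globally $C_+^2$, but it should be said explicitly rather than folded into a citation of \eqref{eq:hess_def}.
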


\goodbreak
\begin{lemma}
\label{le:lower_dim_fconvfs}
Let $1\leq k <n$ and $v\in\fconvfs$. If   there exists $w\in\fconvfsk$ with 
$$v(x_1,\ldots,x_n)=w(x_1,\ldots,x_k)$$ 
for every $(x_1,\dots, x_n)\in\R^n$, then
$$\int_{\R^n} \zeta(|x|) \d\Phi_j^n(v,x) = \int_{\R^k} \Abel^{n-k}\zeta(|z|)  \d\Phi_{j}^k(w,z)$$
for every $0\leq j \leq k$ and $\zeta\in\Had{j}{n}$.
\end{lemma}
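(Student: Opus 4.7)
The plan is to exploit that $v$ is the cylindrical extension of $w$ along $F\deq\R^{n-k}$: the Hessian measure $\Phi_j^n(v,\cdot)$ then factors as the product of the lower-dimensional Hessian measure $\Phi_j^k(w,\cdot)$ on $E\deq\R^k$ and $(n-k)$-dimensional Lebesgue measure on $F$. Once this product structure is in hand, Fubini reduces the fiber integral to an iterated Abel transform through \eqref{eq:abel_k}.

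Concretely, I would apply Lemma~\ref{le:decompose_hessian_measure} with $v_E=w$ and $v_F\equiv 0$. Since $[\Hess 0]_i=0$ for $i\geq 1$ while $[\Hess 0]_0\equiv 1$, the measures $\Phi_i^{n-k}(0,\cdot)$ vanish for $i\geq 1$ and $\Phi_0^{n-k}(0,\cdot)$ coincides with $(n-k)$-dimensional Lebesgue measure on $F$. Thus only the term with $i=j$ in the decomposition survives, yielding the product measure identity
$$\d\Phi_j^n(v,x)=\d\Phi_j^k(w,x_E)\,\d x_F,\qquad x=(x_E,x_F)\in E\times F.$$
(The same identity is extracted in passing in the proof of Lemma~\ref{extend} via conjugation.) Applying Fubini--Tonelli and then \eqref{eq:abel_k} for the inner integral gives
$$\int_{\R^n}\zeta(|x|)\d\Phi_j^n(v,x)=\int_{\R^k}\int_{\R^{n-k}}\zeta\big(\sqrt{|x_E|^2+|x_F|^2}\big)\d x_F\,\d\Phi_j^k(w,x_E)=\int_{\R^k}\Abel^{n-k}\zeta(|x_E|)\d\Phi_j^k(w,x_E),$$
which is the claimed identity.

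The main obstacle is the justification of Fubini, because $\zeta\in\Had{j}{n}$ is permitted to have a non-integrable singularity at the origin and therefore $\zeta(|\cdot|)$ need not be absolutely integrable against $\Phi_j^n(v,\cdot)$ a priori. For $1\leq j\leq k$, the hypothesis $v\in\fconvfs$ (which follows from $w\in\fconvfsk$, since the cylindrical extension of a function $C^2$ near the origin is itself $C^2$ near the origin) is precisely what makes Lemma~\ref{lem:existence_v_smooth_origin} applicable, giving $\int_{\R^n}|\zeta(|x|)|\d\Phi_j^n(v,x)<\infty$ and hence the domination required for Fubini. The case $j=0$ must be handled separately: both Hessian measures reduce to Lebesgue measure and the identity $\int_{\R^n}\zeta(|x|)\d x=\int_{\R^k}\Abel^{n-k}\zeta(|x_E|)\d x_E$ is verified by first truncating $\zeta$ to $\zeta\cdot\mathbf{1}_{\{s>\varepsilon\}}$ where Fubini applies in the standard sense, and then passing to the limit $\varepsilon\to 0^+$ using the convergence of $\int_s^\infty t^{n-1}\zeta(t)\d t$ built into the definition of $\Had{0}{n}$.
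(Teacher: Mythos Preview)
Your argument is essentially the paper's own proof: decompose $\Phi_j^n(v,\cdot)$ via Lemma~\ref{le:decompose_hessian_measure} into $\Phi_j^k(w,\cdot)\otimes\mathcal{L}^{n-k}$, invoke Lemma~\ref{lem:existence_v_smooth_origin} for absolute integrability, and apply Fubini together with \eqref{eq:abel_k}. Your separate treatment of $j=0$ is in fact more scrupulous than the paper, which simply cites Lemma~\ref{lem:existence_v_smooth_origin} without noting that it is stated only for $1\le j\le n$.
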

\begin{proof}
By our assumptions on $v$ and $w$, it follows from Lemma~\ref{le:decompose_hessian_measure} that
$$\d\Phi_j^n(v,(x_1,\ldots,x_n))=\d\Phi_j^k(w,(x_1,\ldots,x_k)) \d x_{k+1}\cdots \d x_n.$$
By Lemma~\ref{lem:existence_v_smooth_origin} the function $x\mapsto \zeta(|x|)$ is absolutely integrable with respect to $\d\Phi_j^n(v,\cdot)$. Thus, 
$$\int_{\R^n} \zeta(|x|) \d\Phi_j^n(v,x)= \int_{\R^k} \int_{\R^{n-k}} \zeta(\sqrt{|y|^2+|z|^2}) \d z \d\Phi_j^k(w,z)= \int_{\R^k} \Abel^{n-k}\zeta(|z|)  \d\Phi_{j}^k(w,z)$$
by Fubini's theorem.
\end{proof}

The following result describes the behavior of functional intrinsic volumes on functions with lower dimensional domain. We require the following definition. For a given $k$-dimensional affine subspace $E\subset \R^n$ there exist a translation $\tau$ on $\R^n$ and $\vartheta\in\SO(n)$ such that $\{\vartheta(\tau x)\colon x\in E\}=\R^k$, where we also consider $\R^k$ as a subspace of $\R^n$. Note that $\tau$ and $\vartheta$ are not unique. Now for every $u\in\fconvs$ with $\dom u \subseteq E$ we have $\dom (u\circ\tau^{-1}\circ\vartheta^{-1})\subseteq \R^k$. 
Hence, we may  consider $u\circ\tau^{-1}\circ\vartheta^{-1}$ as an element of $\fconvsk$. If $\oZ\colon\fconvsk\to\R$ is epi-translation and $\O(k)$ invariant, where $\O(k)$ denotes the orthogonal group in $\R^k$, we set
$$\oZ(u):=\oZ(u\circ \tau^{-1}\circ \vartheta^{-1})$$
for $u\in\fconvs$ with $\dom u\subseteq E$. Since $\oZ$ is epi-translation and $\O(k)$ invariant, this definition does not depend on the particular choice of the translation $\tau$ and $\vartheta\in\SO(n)$ and thus, $\oZ$ is well defined on $\{u\in\fconvs\colon \dom u \subseteq E\}$. Note that it is easy to see that functional intrinsic volumes on $\fconvsk$ are not only rotation invariant but even $\O(k)$ invariant.

\begin{proposition}
\label{prop:hessian_vals_on_lower_dim_functions}
Let $1\le k<n$. If $u\in\fconvs$ is such that $\dom u \subseteq E$ for some affine subspace $E\subset \R^n$ with $\dim E=k$,  then
$$\oZZ{j}{\zeta}^n(u)=\oZZ{j}{\Abel^{n-k}\zeta}^k(u|_E)$$
for every $0\leq j\leq k$ and $\zeta\in\Had{j}{n}$. In particular, $\Abel^{n-k}\zeta \in \Had{j}{k}$.
\end{proposition}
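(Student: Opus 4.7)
The plan is to reduce the identity, via the Legendre transform, to an equality of integrals against Hessian measures on functions in $\fconvfs$, which is settled by combining Lemma \ref{le:dual_hessian_vals_on_fconvfs} with Lemma \ref{le:lower_dim_fconvfs}, and then to pass to general $u$ by approximation.

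First, using the epi-translation and rotation invariance of the functional intrinsic volumes on $\fconvs$ and on $\fconvsk$, together with the conventions just above the statement, I would reduce to the case $E=\R^k$. In this situation the Legendre transform $v\deq u^*\in\fconvf$ is independent of the last $n-k$ coordinates, so $v(y_1,\ldots,y_n)=w(y_1,\ldots,y_k)$ for the unique $w\deq(u|_{\R^k})^*\in\fconvfk$. The proposition therefore becomes, after duality,
$$\oZZ{j}{\zeta}^{n,*}(v)=\oZZ{j}{\Abel^{n-k}\zeta}^{k,*}(w).$$

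Next, I would verify the regularity statement $\Abel^{n-k}\zeta\in\Had{j}{k}$, so that the right-hand side is even defined. For $0\leq j\leq k-1$, this follows by $(n-k)$-fold iteration of Lemma \ref{le:abel_hadwiger_class}, since the inequality $j\leq (\text{current dimension})-2$ required at each step is preserved. The borderline case $j=k$ is the main obstacle, since Lemma \ref{le:abel_hadwiger_class} no longer applies at the final iteration. Here I would argue directly from the representation \eqref{eq:abel_k}: after passing to polar coordinates in $\R^{n-k}$ (or, when $n-k=1$, substituting $s=\sqrt{r^2+t^2}$), the defining conditions $\lim_{s\to 0^+}s^{n-k}\zeta(s)=0$ and existence and finiteness of $\lim_{s\to 0^+}\int_s^\infty t^{n-k-1}\zeta(t)\d t$ for $\zeta\in\Had{k}{n}$ imply that $\lim_{t\to 0^+}\Abel^{n-k}\zeta(t)$ exists and is finite, which is precisely the membership condition for $\Had{k}{k}$.

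Finally, I would establish the dual identity by density. Choose $w_m\in\fconvfsk$ that epi-converges to $w$ and set $v_m(y_1,\ldots,y_n)\deq w_m(y_1,\ldots,y_k)$. Since $w_m$ is of class $C^2$ near the origin in $\R^k$, its cylindrical extension $v_m$ is of class $C^2$ near the origin in $\R^n$, and so $v_m\in\fconvfs$ and $v_m$ epi-converges to $v$. Applying Lemma \ref{le:dual_hessian_vals_on_fconvfs} in both $\R^n$ and $\R^k$, and Lemma \ref{le:lower_dim_fconvfs} in between, yields
$$\oZZ{j}{\zeta}^{n,*}(v_m)=\int_{\R^n}\zeta(|x|)\d\Phi_j^n(v_m,x)=\int_{\R^k}\Abel^{n-k}\zeta(|z|)\d\Phi_j^k(w_m,z)=\oZZ{j}{\Abel^{n-k}\zeta}^{k,*}(w_m).$$
The continuity of the functional intrinsic volumes on $\fconvf$ and on $\fconvfk$, together with the equivalence of epi-convergence under Legendre duality, then permits passing to the limit, giving the boxed dual identity and hence, after unwinding the duality, the desired $\oZZ{j}{\zeta}^n(u)=\oZZ{j}{\Abel^{n-k}\zeta}^k(u|_E)$.
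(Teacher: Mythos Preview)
Your overall plan coincides with the paper's: reduce to $E=\R^k$, pass to the dual, establish the identity first on $\fconvfs$ via Lemmas \ref{le:dual_hessian_vals_on_fconvfs} and \ref{le:lower_dim_fconvfs}, verify $\Abel^{n-k}\zeta\in\Had{j}{k}$ so that the right-hand side is a well-defined continuous valuation, and then extend by density. For $j<k$ your iteration of Lemma \ref{le:abel_hadwiger_class} is exactly what the paper does.

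The gap is in the borderline case $j=k$. Polar coordinates give
\[
\Abel^{n-k}\zeta(t)=(n-k)\kappa_{n-k}\int_0^\infty \zeta\big(\sqrt{r^2+t^2}\big)\,r^{n-k-1}\d r,
\]
and formally sending $t\to 0^+$ produces $(n-k)\kappa_{n-k}\int_0^\infty \zeta(r)\,r^{n-k-1}\d r$; but membership in $\Had{k}{n}$ only asserts that the improper integral $\int_s^\infty t^{n-k-1}\zeta(t)\d t$ has a finite limit as $s\to 0^+$, not that $t\mapsto t^{n-k-1}\zeta(t)$ is absolutely integrable on $(0,\infty)$. (For instance, with $n-k=1$, the function $\zeta(t)=\sin(\log(1/t))/(t\log(1/t))$ near $0$, cut off smoothly, lies in $\Had{k}{k+1}$ while $\int_0^1|\zeta(t)|\d t=\infty$.) Hence neither dominated nor monotone convergence applies, and the interchange of limit and integral needs a genuine argument which you have not supplied; it can be carried out by integrating by parts against the primitive $F(s)=\int_s^\infty t^{n-k-1}\zeta(t)\d t$ and splitting carefully near $s=t$, but this is nontrivial. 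The paper sidesteps the analysis entirely: for $t>0$ it evaluates $\oZZ{k}{\zeta}^{n,*}$ on the explicit functions $v_t(x)=w_t(x_1,\dots,x_k)$ with $w_t$ as in Lemma \ref{le:delta}, obtaining (via Lemmas \ref{le:dual_hessian_vals_on_fconvfs} and \ref{le:lower_dim_fconvfs}) the identity $\oZZ{k}{\zeta}^{n,*}(v_t)=\Abel^{n-k}\zeta(t)$, and then invokes the already-known continuity of $\oZZ{k}{\zeta}^{n,*}$ on $\fconvf$ (guaranteed by $\zeta\in\Had{k}{n}$) to conclude that $\lim_{t\to 0^+}\Abel^{n-k}\zeta(t)$ exists and is finite.
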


\noindent
We deduce this proposition from its dual version, which is stated below, using the following facts and definitions. If $u\in\fconvs$ is such that $\dom u \subseteq \R^k$ and $v\in \fconvf$ is its Legendre transform, then  there exists $w\in\fconvfk$ such that $v(x_1,\ldots,x_n)=w(x_1,\ldots,x_k)$ for $(x_1, \dots, x_n)\in\R^n$. Moreover, $w^*=u$ on $\R^k$, when taking the Legendre transform on $\R^k$. Similar as above, given an $\O(k)$ invariant $\oZ:\fconvfk\to\R$ and $E\in\Grass{k}{n}$, we set
$$\oZ(v):=\oZ(v\circ \vartheta^{-1})$$
for $v\in\fconvfE$ with a suitable $\vartheta\in\SO(n)$.

\begin{proposition}
For $1\le k <n$, let $E\in\Grass{k}{n}$ and write $x=(x_E,x_{E^\perp})$ for $x\in \R^n$ with $x_E\in E$ and $x_{E^\perp}\in E^\perp$. If $v\in\fconvf$ is such that $v(x_E,x_{E^\perp})=w(x_E)$ for some $w\in \fconvfE$, then
$$\oZZ{j}{\zeta}^{n,*}(v)=\oZZ{j}{\Abel^{n-k} \zeta}^{k,*}(w)$$
for every $0\leq j \leq k$ and $\zeta\in\Had{j}{n}$. In particular, $\Abel^{n-k}\zeta \in \Had{j}{k}$.
\end{proposition}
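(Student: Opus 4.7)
The plan is to establish the identity first on the dense subset of functions smooth at the origin and then extend by continuity. By the dual epi-translation invariance and $\SO(n)$-covariance of $\oZZ{j}{\zeta}^{n,*}$, together with the convention introduced for $\O(k)$-invariant valuations evaluated on functions with lower-dimensional domains, I may assume without loss of generality that $E=\R^k$, so that $v(x_1,\dots,x_n)=w(x_1,\dots,x_k)$ for some $w\in\fconvfk$.

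Before invoking the integral identity, I must verify that $\Abel^{n-k}\zeta\in\Had{j}{k}$. For $0\le j\le k-1$ this follows by iterating Lemma~\ref{le:abel_hadwiger_class}: at each of the $n-k$ consecutive steps the index $j$ lies at most two below the ambient dimension, so the lemma applies. The case $j=k$ involves the endpoint step $\Abel\colon\Had{k}{k+1}\to\Had{k}{k}$, which lies outside the range of that lemma. I would handle it by hand, writing, for $\eta\in\Had{k}{k+1}$,
\begin{equation*}
\Abel\eta(s)-2\int_s^\infty \eta(t)\d t = 2\int_s^\infty \eta(t)\Bigl(\frac{t}{\sqrt{t^2-s^2}}-1\Bigr)\d t,
\end{equation*}
splitting the integral on the right as $\int_s^{2s}+\int_{2s}^\infty$, and using $\eta(t)=o(1/t)$ as $t\to 0^+$ (from $\lim_{t\to 0^+}t\eta(t)=0$) together with the bound $t/\sqrt{t^2-s^2}-1=O(s^2/t^2)$ on the outer piece and the elementary identity $\int_s^{2s}1/\sqrt{t^2-s^2}\d t=\acosh(2)$ on the inner piece to show that the right-hand side vanishes as $s\to 0^+$. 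Since $\int_s^\infty \eta(t)\d t$ has a finite limit by the definition of $\Had{k}{k+1}$, we conclude $\Abel\eta\in\Had{k}{k}$, closing the induction.

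For $w\in\fconvfsk$ the trivial extension $v$ belongs to $\fconvfs$, because $\Hess v$ near the origin is block-diagonal with $\Hess w$ in the first block and zero in the complementary one. Applying Lemma~\ref{le:dual_hessian_vals_on_fconvfs} in dimension $n$, then Lemma~\ref{le:lower_dim_fconvfs}, and finally Lemma~\ref{le:dual_hessian_vals_on_fconvfs} in dimension $k$ (which is legitimate since $\Abel^{n-k}\zeta\in\Had{j}{k}$ has just been established), we obtain
\begin{equation*}
\oZZ{j}{\zeta}^{n,*}(v)=\int_{\R^n}\zeta(|x|)\d\Phi_j^n(v,x)=\int_{\R^k}\Abel^{n-k}\zeta(|z|)\d\Phi_j^k(w,z)=\oZZ{j}{\Abel^{n-k}\zeta}^{k,*}(w).
\end{equation*}

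To pass from $\fconvfsk$ to arbitrary $w\in\fconvfk$, note that the right-hand side of the asserted identity is continuous in $w$ by construction of the functional intrinsic volume, while the left-hand side is continuous because epi-convergence $w_m\to w$ in $\fconvfk$ forces epi-convergence of the corresponding extensions $v_m\to v$ in $\fconvf$, combined with the continuity of $\oZZ{j}{\zeta}^{n,*}$ on $\fconvf$. Since $\fconvfsk$ is dense in $\fconvfk$, the identity propagates to the whole space. I expect the main obstacle to be the $j=k$ step of the Abel iteration: an $\eta\in\Had{k}{k+1}$ need not be absolutely integrable near the origin, so $\lim_{s\to 0^+}\Abel\eta(s)$ is only accessible through the conditionally convergent improper integral $\int_0^\infty \eta(t)\d t$, and the comparison with $2\int_s^\infty \eta(t)\d t$ must be carried out carefully using the $o(1/t)$ control.
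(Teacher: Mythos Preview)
Your argument is correct, and the overall scaffold (reduce to $E=\R^k$, establish the integral identity on $\fconvfsk$ via Lemmas~\ref{le:dual_hessian_vals_on_fconvfs} and~\ref{le:lower_dim_fconvfs}, then extend by density and continuity) matches the paper. The genuine divergence is in the $j=k$ step, where you must show $\Abel\eta\in\Had{k}{k}$ for $\eta\in\Had{k}{k+1}$. You do this by a direct analytic estimate: rewriting $\Abel\eta(s)=2\int_s^\infty \eta(t)\,t/\sqrt{t^2-s^2}\,\d t$, comparing with $2\int_s^\infty\eta(t)\d t$, and controlling the difference using $t\eta(t)=o(1)$. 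That works, and your sketch can be made rigorous exactly as you indicate (split at $2s$, use $\int_s^{2s}(t^2-s^2)^{-1/2}\d t=\acosh 2$ on the inner piece, and for the outer piece split once more at a fixed $\delta$ to exploit the $o(1/t)$ bound).

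The paper takes a different route for $j=k$: rather than estimating, it manufactures the limit by choosing test functions. With $w_t(x)=\tfrac12\sum_{i=1}^k|x_i-t\bar x_i|$ (where $|\bar x|=1$) and $v_t$ its trivial extension, Lemma~\ref{le:delta} gives $\Phi_k^k(w_t,\cdot)=\delta_{t\bar x}$, so the already-established identity on $\fconvfs$ yields $\oZZ{k}{\zeta}^{n,*}(v_t)=\Abel^{n-k}\zeta(t)$ for $t>0$. Continuity of $\oZZ{k}{\zeta}^{n,*}$ on $\fconvf$ then forces $\lim_{t\to0^+}\Abel^{n-k}\zeta(t)$ to exist and be finite. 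This is shorter and conceptually cleaner (it recycles continuity of the functional rather than re-proving an analytic fact), but it relies on the somewhat special Lemma~\ref{le:delta}. Your approach is more self-contained and gives the stronger, purely real-analytic statement that $\Abel$ maps $\Had{k}{k+1}$ into $\Had{k}{k}$ without any reference to valuations.
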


\begin{proof}
Without loss of generality, we may assume that $E=\R^k$. By Lemma~\ref{le:lower_dim_fconvfs} and Lemma~\ref{le:dual_hessian_vals_on_fconvfs}, we see that the statement holds if $v\in\fconvfs$ (and therefore $w\in\fconvfsk$). Since $\fconvfs$ is dense in $\fconvf$ and $\fconvfsk$ is dense in $\fconvfk$ while $\oZZ{j}{\zeta}^{n,*}$ is continuous on $\fconvf$, it remains to show that $\oZZ{j}{\Abel^{n-k}\zeta}^{k,*}$ is well-defined and continuous on $\fconvfk$. By Theorem~\ref{dthm:hadwiger_convex_functions}, it suffices to show that $\Abel^{n-k}\zeta\in\Had{j}{k}$.

If $j<k$, then the statement follows from Lemma~\ref{le:abel_hadwiger_class}. In the remaining case $j=k$, we need to show that $\Abel^{n-k}\zeta\in\Had{k}{k}$. Since $\zeta$ is continuous with bounded support, it is easy to see that also $\Abel^{n-k}\zeta$ is continuous with bounded support. It remains to show that $\lim_{s\to 0^+} \Abel^{n-k} \zeta(s)$ exists and is finite.
Let $\bar{x}_1,\ldots,\bar{x}_k\in\R\backslash \{0\}$ be such that $|\bar{x}_E|=1$, where $\bar{x}_E\deq(\bar{x}_{1},\ldots,\bar{x}_{k})$. For $t\geq 0$, define $w_t\in\fconvfk$ as
$$w_t(x_1,\ldots,x_k):=\frac 12 \sum_{i=1}^k \vert x_i- t \bar{x}_{i} \vert$$
for $(x_1,\ldots,x_k)\in\R^k$ and let $v_t\in\fconvf$ be such that $v_t(x_1,\ldots,x_n)\deq w_t(x_1,\ldots,x_k)$ for $(x_1,\ldots,x_n)\in\R^n$. Note that $w_t\in\fconvfsk$ and $v_t\in\fconvfs$ if $t>0$. By Lemma \ref{le:dual_hessian_vals_on_fconvfs}, Lemma~\ref{le:lower_dim_fconvfs} and Lemma~\ref{le:delta} we now have
$$\oZZ{k}{\zeta}^{n,*}(v_t)=\int_{\R^n} \zeta(|x|)\d\Phi_k^n(v_t,x)=\int_{\R^k} \Abel^{n-k} \zeta(|x_E|) \d\Phi_k^k(w_t,x_E)=\Abel^{n-k}\zeta(t)$$
for every $t>0$. Since $\zeta\in\Had{k}{n}$, the functional intrinsic volume $\oZZ{k}{\zeta}^{n,*}$ is continuous on $\fconvf$, and 
$$\lim_{t\to 0^+}\oZZ{k}{\zeta}^{n,*}(v_t)= \oZZ{k}{\zeta}^{n,*}(v_0).$$
Thus, $\lim_{t\to 0^+}\Abel^{n-k} \zeta(t)$ exists and is finite, which completes our proof.
\end{proof}

\goodbreak
\subsection{On Klain's Proof}
In our proof of the Hadwiger theorem for smooth valuations, Theorem \ref{hadwiger_smooth}, we adapted Klain's approach \cite{Klain95} to the Hadwiger theorem. We used induction on the dimension and extended valuations on functions with lower dimensional domains to valuations on functions with general domains. This step required to invert the Abel transform, which is possible in the setting of smooth valuations.

Let $1\le j\le k$ and $\zeta\in\Had{j}{k}$. If $\zeta\not\in C^1((0,\infty))$, then by Lemma~\ref{le:abel_sq_diffable} and Proposition~\ref{prop:hessian_vals_on_lower_dim_functions} there does not exist any $\xi\in\Had{j}{k+2}$ such that
$$\oZZ{j}{\xi}^{k+2}(u)=\oZZ{j}{\zeta}^{k}(u|_{\R^k})$$
for every $u\in\fconvskpt$ with $\dom u\subseteq \R^k$. 
More generally, if $\zeta\in\Had{j}{k}$ is not in the image of the Abel transform (as map from $\Had{j}{k+1}$ to $\Had{j}{k}$), then $\oZZ{j}{\zeta}^k$ cannot be obtained by restricting any functional intrinsic volume of the form $\oZZ{j}{\xi}^{k+l}$, with $l>0$, to functions with $k$-dimensional domain. 
This also means that it is not possible to extend $\oZZ{j}{\zeta}^k$ to functions defined on any higher-dimensional space.

In other words, the dimension of the ambient space matters, which is in contrast to classical intrinsic volumes. This is also the reason why we used Klain's approach only for smooth valuations. 

\subsection*{Acknowledgments}
The authors thank Semyon Alesker, Daniel Hug, Jonas Knoerr, and Boris Rubin for their helpful remarks.
 M.~Ludwig was supported, in part, by the Austrian Science Fund (FWF):  P~34446 and F.~Mussnig was supported by the Austrian Science Fund (FWF): J 4490-N.

\bigskip

\footnotesize

\end{document}